\pgfplotsset{compat=1.16}
\newcommand{\mb}{\mathbf}
\newcommand{\grad}{\nabla}
\newcommand{\ds}{\displaystyle}
\newcommand{\N}{\mathbb{N}}
\newcommand{\R}{\mathbb{R}}
\newcommand{\Z}{\mathbb{Z}}
\newcommand{\weakstarto}{\stackrel{\ast}{\rightharpoonup}}
\newcommand*\dd{\mathop{}\!\mathrm{d}}
\DeclareMathOperator*{\TV}{TV}
\DeclareMathOperator*{\BV}{BV}
\DeclareMathOperator*{\BVW}{BV_W}
\DeclareMathOperator*{\TR}{TR}
\theoremstyle{plain}
\newtheorem{assumption}[theorem]{Assumption}
\theoremstyle{remark}
\numberwithin{equation}{section}
\numberwithin{theorem}{section}
\numberwithin{remark}{section}
\numberwithin{condition}{section}
\title{Input regularization for integer
optimal control in BV with applications to
control of poroelastic and poroviscoelastic systems}
\author{Lorena Bociu%
    \thanks{Department of Mathematics, North Carolina State University, Raleigh, NC, USA (\email{lvbociu@ncsu.edu})}
    \and
    Paul Manns\thanks{TU Dortmund, Dortmund, Germany (\email{paul.manns@tu-dortmund.de})}
    \and
    Marvin Severitt\thanks{Faculty of Mathematics, TU Dortmund,
    	Dortmund, Germany (\email{marvin.severitt@tu-dortmund.de})}
    \and
    Sarah Strikwerda\thanks{Department of Mathematics, North Carolina State University, Raleigh, NC, USA (\email{slstrikw@ncsu.edu})}
\author{Paul Manns%
\thanks{\email{paul.manns@tu-dortmund.de}}}}
\crefname{theorem}{Theorem}{Theorems}
\Crefname{theorem}{Theorem}{Theorems}
\crefname{assumption}{Assumption}{Assumptions}
\Crefname{assumption}{Assumption}{Assumptions}
\crefname{lemma}{Lemma}{Lemmas}
\Crefname{lemma}{Lemma}{Lemmas}
\crefname{definition}{Definition}{Definitions}
\Crefname{definition}{Definition}{Definitions}
\crefname{proposition}{Proposition}{Propositions}
\Crefname{proposition}{Proposition}{Propositions}
\crefname{algorithm}{Algorithm}{Algorithms}
\Crefname{algorithm}{Algorithm}{Algorithms}
\begin{document}
%%%%%%%%%%%%%%%%%%%%%%%%%%%%%%%%%%%%%%%%%%%%%%%

\maketitle

\begin{abstract}
We revisit a class of integer optimal control problems for which a trust-region method has been proposed and analyzed in \cite{leyffer2022sequential}. While the algorithm proposed in \cite{leyffer2022sequential} successfully solves the class of optimization problems under consideration, its convergence analysis requires restrictive regularity assumptions. There are many examples of integer optimal control problems involving  partial differential equations where these regularity assumptions are not satisfied. In this article we provide a way to bypass the restrictive regularity assumptions by introducing an additional partial regularization
    of the control inputs by means of mollification and proving a $\Gamma$-convergence-type
    result when the support parameter of the mollification
    is driven to zero. We highlight the applicability of this theory in the case of fluid flows through deformable porous media equations that arise
    in biomechanics. We show that the regularity assumptions are violated in the case of poroviscoelastic systems, and thus one needs to use the regularization of the control input introduced in this article. Associated 
numerical results show that while the homotopy can help to find better
objective values and points of lower instationarity, the practical
performance of the algorithm without the input regularization 
may be on par with the homotopy.
\end{abstract}

%%%%%%%%%%%%%%%%%%%%%%%%%%%%%%%%%%%%%%%%%%%%%%%
\section{Introduction}

We are interested in solving the following 
optimization problem
\begin{gather}\label{eq:p}
\begin{aligned}
    \min_{w \in \BV(0,T)}\ & j(w) + \alpha \TV(w)\\
    \text{s.t.}\quad\quad & w(t) \in W \subset \Z
    \text{ for almost all (a.a.) } t \in (0,T),
\end{aligned}\tag{P}
\end{gather}
where parameter $\alpha > 0$, time $T > 0$,  $W \subset \Z$ is a finite set of integers, and $\TV$ denotes the
total variation of the $W$-valued control input function $w$. The functional
$j : L^2(0,T) \to \R$ is the objective that takes the
form $j(w) \coloneqq J(Gw,w)$, where $J : X\to \R$
is a coercive and lower semicontinuous function on
a Banach space $X$ that is the state space of some partial differential equation (PDE).
The function $G : L^2(0,T) \to X$ is the continuous
solution operator of a PDE, which in this article will be the solution operator
of a coupled system describing fluid flow through deformable porous media (see \Cref{sec:biots_model}). We note that \eqref{eq:p}
admits a solution in this setting, see \cite{leyffer2001integrating}.

The optimization problem \eqref{eq:p} falls in the class of so-called
\emph{integer optimal control problems}, which allow to model non-smooth behavior
	by restricting to discrete changes in distributed control variables.
Driven by versatile applications from the optimization of supply
and traffic networks 
\cite{martin2006mixed,goettlich2017partial,hante2017challenges,gottlich2019partial}
over automotive control \cite{kirches2013mixed,gerdts2005solving}
to topology optimization \cite{sigmund2013topology,haslinger2015topology}, 
this problem class has attracted considerable research interest in
recent years.
Different methods have been proposed to treat integer optimal control 
problems. One of them is the combinatorial integral approximation decomposition \cite{sager2011combinatorial} that splits the
optimization into the solution of a relaxed problem, where $W$ is 
replaced by a \emph{one-hot encoding} and then the convex hull is analyzed
and a fast algorithm computes a $W$-valued control from the
relaxation \cite{sager2005numerical,sager2012integer,hante2013relaxation,manns2020multidimensional}. This method requires the ability to produce highly oscillating
control functions, which are undesirable in many applications
and can therefore not be applied to \eqref{eq:p} if $\alpha > 0$.

The $\TV$-term that influences \eqref{eq:p} for $\alpha > 0$ has been
prevalent in mathematical image analysis since the 1990s, see in particular
the work \cite{rudin1992nonlinear}. We give the references
\cite{vogel1996iterative,chambolle1997image,chan1998total,fornasier2009subspace,bredies2010total,lellmann2014imaging,hintermuller2017optimal} but note that they reflect only a small portion of the research
in this area.  Several authors have incorporated $\TV$-terms in optimal control
problems \cite{CKK, CKL, loxton2012control,kaya2020optimal,engel2021optimal},
in tight relaxations of integer optimal control problems
from topology optimization \cite{clason2018total}, and in
the approximation step of the combinatorial 
integral approximation decomposition
\cite{bestehorn2019switching,sager2021mixed,bestehorn2021mixed}.

A trust-region method has been proposed and analyzed to directly solve problems of the form 
\eqref{eq:p} in \cite{leyffer2022sequential}. Therein, the non-smoothness of
the subproblems that arises from the distributed integer variables is handled explicitly
so that the trust-region subproblems are integer linear
programs after discretization. They can be solved efficiently
with graph-based \cite{severitt2022efficient} and dynamic
programming-based approaches \cite{marko2022integer}.
For one-dimensional time domains $(0,T)$, like the one in \eqref{eq:p},
the $\TV$-term of a $W$-valued function $w$ is the sum of the jump
heights of the function $w$, implying that only finitely many
jumps occur because the height of a single jump is bounded below
(by 1 in the case  of $W \subset \Z$; always by some constant
if $|W| < \infty$). This constitutes a desirable regularization because the application 
underlying \eqref{eq:p} usually does not permit infinitely many jumps
or general high-frequency  switching between different control modes.
This is also the case for the class of PDEs that we consider in this article to
constrain our problem \eqref{eq:p} when considering the application of tissue engineering.
This is due to the fact that in the laboratory, only a finite set of values of the controls,
e.g.\ loads that are applied in confined compression testing, are used. We highlight that although we restrict ourselves to integer-valued
controls, all of the theory can be transferred straightforwardly to the case that $W$ is a finite subset of $\R$
because the $\TV$-seminorm difference between two controls is still bounded below by a positive constant if it is
not zero. Therefore, we believe that these equations constitute good test cases for our algorithm.

While these optimization problems seem to be suited for the algorithmic framework proposed in \cite{leyffer2022sequential}, the regularity
assumptions for its convergence analysis cannot always be satisfied. Therefore, in this article, we advance the algorithmic methodology
by enforcing the necessary regularity by adding a regularization of the control input when passed to $j$. Specifically, the function $j$
is altered to $j \circ K_\varepsilon$, where $K_\varepsilon$ is a
convolution operator arising from a standard mollification with parameter
$\varepsilon$. 
In this article we make the following contributions
in light of the algorithmic framework proposed in
\cite{leyffer2022sequential}.

\paragraph{Contributions} First we verify the regularity assumptions required
in \cite{leyffer2022sequential} for the altered objective. We prove
the lower and upper bound inequalities ($\Gamma$-convergence) of the
altered optimization problems when driving $\varepsilon$
(the parameter controlling the
size of the support of the mollifier) to zero. The $\Gamma$-convergence
result is achieved with respect to weak$^*$ and
strict convergence in the weak$^*$ closed subset of functions of
bounded variations that are feasible for \eqref{eq:p}.

Consequently,  global minimizers of the altered optimization
problems converge to global minimizers  of \eqref{eq:p}. However, the lower and upper bound inequalities do not
imply that the same holds true for stationary points like the ones that are produced
by the algorithm proposed in \cite{leyffer2022sequential}. We
consider a homotopy
that drives $\varepsilon \to 0$ and applies meaningful termination
criteria for each run (tightening of minimal trust-region radius
to determine that no progress is made for $\varepsilon \to 0$ and
achievement of a certain predicted reduction). 
We show that the cluster points
are strict limits of their approximating sequences, implying that
the homotopy does not \emph{overlook} cheap reductions of the objective
that can, for example, be obtained by removing small jumps from the control.

%As an application of our theory, we consider 
We highlight the applicability and benefits of our theory in the case of linear poroelastic and poroviscoelastic systems with incompressible constituents and distributed or boundary controls, with motivation coming from biomedicine. We show that the regularity assumptions required
in \cite{leyffer2022sequential}  are satisfied in the case of poroelastic systems. As a consequence, the theory and algorithm provided in \cite{leyffer2022sequential} can be applied.  In comparison, the regularity assumptions are violated in the case of poroviscoelastic systems. Therefore, for these systems, one needs to use the regularization of the control input introduced in this article. 

Lastly, we provide numerical results for two instances of the class of considered
PDEs that differ in their dynamics and analytical properties. The
numerical results show that while the homotopy can help to find better
objective values and points of lower instationarity, the practical
performance of the algorithm without the input regularization 
may be on par with the homotopy. Consequently, the lack of
regularity may not always impair the practical performance and may
therefore be outweighed
by the consumption of much less running time than the homotopy.

\paragraph{Structure of the paper.} In \Cref{sec:primer}, 
we introduce our notation, briefly recall functions of bounded variation, 
and define the
closed subset that corresponds to the feasible set of our optimization problem. In \Cref{sec:slip}, we 
describe the sequential linear integer programming (SLIP) algorithm provided in
\cite{leyffer2022sequential}. We
introduce and analyze the effect of the control input regularization in 
\Cref{sec:input_regularization}. The considered class of
fluid-solid mixture systems and the discussion of the regularity
assumptions of \Cref{alg:slip} with respect to these
coupled systems of  PDEs are given in \Cref{sec:biots_model}.
We provide our computational setup, experiments, and results
in \Cref{sec:comp}. Finally, we draw our  conclusions 
in \Cref{sec:conclusion}.

\section{Notation and Primer on Functions of Bounded Variation}\label{sec:primer}

\paragraph{Notation.} Let $X$ be a Banach space. As usual,  we denote its topological
dual space by the symbol $X^*$. For a given bounded, Lipschitz domain $\Omega$,  $L^2(\Omega)$ is the Hilbert space of square-integrable functions with inner product given by $(\cdot, \cdot)$. When the domain $\Omega$ is not clear from context, the $L^2$ inner product will be denoted as $(\cdot, \cdot)_{\Omega}$. Furthermore, we use the standard notation $\ds H^1_{\Gamma_*}(\Omega) = \left\{f \in H^1(\Omega)\,\middle|\, \tau f\big|_{\Gamma_*} = 0 \right\}$ where $\tau$ is the trace operator, for any $\Gamma_* \subseteq \partial \Omega$.
Additionally, for any Hilbert space $Y$, we define the space  $$L^2(0,T;Y)= \left\{u:[0,T] \to Y \  \middle| \  u\text{ is measurable and }\int_0^T \|u(t)\|_Y^2dt <\infty\right\}$$ with the inner-product $(u,v)_{L^2(0,T;Y)}=\int_0^T (u(t),v(t))_Y dt$. Similarly, $H^1(0,T;Y)$ is the set of functions in $L^2(0,T;Y)$ with a time derivative in the weak sense, $u_t: [0,T] \to Y$, that belongs to $L^2(0,T;Y)$. The inner product in $H^1(0,T;Y)$ is $(u,v)_{H^1(0,T;Y)}=\int_0^T (u,v)_Y + (u_t, v_t)_Y dt$. \\ 
\paragraph{Functions of Bounded Variation.}
We give a brief summary and state the properties of functions of bounded 
variation, which we require in the remainder of the paper. For a detailed introduction,
we refer the reader to the monograph \cite{ambrosio2000functions}. 
First, we recall that a function $f : (0,T) \to \R$ is defined to be
of bounded  variation or in the space $\BV(0,T)$ if $f \in L^1(0,T)$
and
\[ \TV(f) \coloneqq \sup
\left\{ \int_0^T f(t) \phi'(t)\dd t\,\Bigg|\,
\phi \in C^1_c(0,T)\ \text{ and }\ \sup_{\mathclap{s \in (0,T)}} |\phi(s)| \le 1
\right\} < \infty.
\]
We recall that a sequence $(w^n)_{n \in \mathbb{N}} \subset \BV(0,T)$ is said to
converge \emph{weakly-$^*$}
to a function $w \in \BV(0,T)$ if
$w^n \to w$ in $L^1(0,T)$ and $\limsup_{n\to\infty} \TV(w^n) < \infty$.
Moreover, $(w^n)_{n \in \mathbb{N}}$ is said to converge \emph{strictly} to $w$
if in addition $\TV(w^n) \to \TV(w)$.
We define the subset of $\BV(0,T)$ that corresponds to the
feasible set of the optimization problem \eqref{eq:p} as
\[ \BVW(0,T) = \{ w \in \BV(0,T) \,|\, w(t) \in W \text{\ for a.a.\ } t \in [0,T]\}. \]
It is important for our analysis that the subset $\BVW(0,T)$ 
is closed with respect to weak-$^*$ and strict
convergence in $\BV(0,T)$. The closedness follows from the
fact that sequences of $W$-valued functions that converge
in $L^1(0,T)$ also have $W$-valued limits, which is stated
explicitly for our context in \cite[Lemma 2.2]{leyffer2022sequential}.

The analysis of the algorithm in \cite{leyffer2022sequential}---the
starting point of our work---makes use of regularity
conditions, particularly continuity properties, that are defined for
input functions in $L^2(0,T)$. While the norm-topologies of
$L^1(0,T)$ and $L^2(0,T)$ are different, we note that
convergence of a sequence of $W$-valued functions in $L^1(0,T)$
implies  convergence in $L^2(0,T)$ as well (due to the fact that $W$ is finite, implying a uniform
$L^\infty(0,T)$-bound on any sequence of functions).
This can be seen as follows. Let $(u_n)_{n \in \mathbb{N}} \subset \BVW(0,T)$ be a sequence such that $u_n \weakstarto u$ in $\BVW(0,T)$. Let $w_{max}$ be the maximum value of $W$. We have 
\begin{align*}\|u_n-u\|^p_{L^p(0,T)} &= \int_0^T |u_n(t)-u(t)|^p dt \leq \int_0^T 2^{p-1}(|u_n(t)|^{p-1} + |u(t)|^{p-1})|u_n(t) -u(t)| \ dt\\
& \leq C_{p, w_{max}}\|u_n(t) -u(t)\|_{L^1(0,T)}\end{align*}
Consequently, we will frequently use that sequences of functions that converge 
weakly-$^*$ or strictly in $\BVW(0,T)$ also converge in $L^2(0,T)$.

Finally, we recall Young's inequality for convolution, see \cite{castillo2016} p. 319,  since it is used several times in the following sections: Given $f \in L^p(0,T)$ and $g\in L^q(0,T)$ such that $\frac{1}{p} + \frac{1}{q}=1+ \frac{1}{r}$ with $1\leq p,q, r \leq \infty$,
   \begin{equation} \label{Yineq} \|f*g\|_{L^r(0,T)} \leq C\|f\|_{L^p(0,T)}\|g\|_{L^q(0,T)}.\end{equation}
\section{Sequential Linear Integer Programming Algorithm}\label{sec:slip}
In order to provide a self-contained article, we provide the SLIP algorithm, which is a function space
algorithm to solve \eqref{eq:p} to stationarity \cite{leyffer2022sequential}. Conceptually, the SLIP
algorithm is a trust-region method that solves a sequence of trust-region subproblems.
We briefly introduce the trust-region problem below before laying out the algorithm.

The trust-region subproblem takes a feasible control $w$ for \eqref{eq:p} and a
trust-region radius $\Delta > 0$ as inputs and reads
\begin{gather}\label{eq:tr}
\begin{aligned}
\min_{d \in L^2(0,T)} & (\nabla j(w), d)_{L^2(0,T)} + \alpha \TV(w + d) - \alpha\TV(w)
\eqqcolon \ell(w, d) \\
\text{s.t.}\quad
& w(s) + d(s) \in W \text{ for a.a.\ } s \in (0,T),\\
& \|d\|_{L^1(0,T)} \le \Delta,
\end{aligned}
\tag{TR}
\end{gather}
where we assume that $j: L^2(0,T) \to \mathbb{R}$ is Fr\'{e}chet differentiable, in particular $\nabla j(w) \in L^1(0,T).$ In section \ref{sec:input_regularization} we will discuss the further assumptions made in Assumption 4.1 in \cite{leyffer2022sequential} which are required for the convergence analysis.
An instance of the problem class \eqref{eq:tr} for given $w$ and $\Delta > 0$ is denoted by $\TR(w,\Delta)$ in the remainder.

We state the trust-region algorithm that solves subproblems of the form \eqref{eq:tr}
in \Cref{alg:slip} \cite{leyffer2022sequential}. The algorithm consists of two nested loops. In every iteration of the outer loop,
which is indexed by $n \in \N$, the trust-region radius is reset to the input $\Delta^0 > 0$. Then the inner loop, which is indexed
by $k \in \N$, is executed. In each inner iteration, a trust-region subproblem $\TR(w^{n-1},\Delta^{n,k})$
is solved for the current trust-region radius, $\Delta^{n,k}$, and the previously accepted iterate $w^{n - 1}$ or the input $w^0$ (if $n - 1 = 0$).
We highlight that the trust-region subproblems become integer linear programs after discretization, see \cite{leyffer2022sequential}, which can be solved to optimality with a pseudo-polynomial algorithm as detailed in \cite{severitt2022efficient,marko2022integer}. This is a deviation from the standard literature, where the convergence theory is developed using a Cauchy point which only guarantees a sufficient decrease and not optimality for the trust-region subproblem, see for example chapter 12 in \cite{Conn2000TRMethods}.
If the predicted reduction (measured as the negative objective value of the trust-region subproblem) is zero, then the algorithm terminates.
If the solution of the trust-region subproblem is acceptable (the ratio of actual reduction and predicted reduction
is larger than the input $\sigma > 0$), then the inner loop terminates with new iterate $w^n$.
If the step is rejected, then the trust region is reduced and another iteration of the inner loop is executed.
\begin{algorithm}[H]
\caption{Trust-region algorithm from \cite{leyffer2022sequential}}\label{alg:slip}
\textbf{Input: } Initial guess  $w^0$ (feasible for \eqref{eq:p}), $\Delta^0 > 0$, $\sigma \in (0,1)$
\begin{algorithmic}[1]
\For{$n = 1,\ldots$}
\State $k \gets 0$
\State $\Delta^{n,0} \gets \Delta^0$
\Repeat
\State $d^{n,k} \gets$ minimizer of $\TR(w^{n-1},\Delta^{n,k})$
\Comment{Solve trust-region subproblem.}
\If{$\ell(w^{n-1}, d^{n,k}) = 0$}
\Comment{Predicted reduction is zero $\Rightarrow$ terminate.}\label{ln:terminate}
\State Terminate with solution $w^{n-1}$.
\ElsIf{$\frac{j(w^{n - 1}) + \alpha \TV(w^{n-1}) - j(w^{n-1} + d^{n,k}) - \alpha \TV(w^{n-1} + d^{n,k})}{-\ell(w^{n-1},d^{n,k})} < \sigma $}\label{ln:suff_red}
\Comment{Reject step.}
\State $\Delta^{n,k+1} \gets \Delta^{n,k} / 2$
\Else
\Comment{Accept step.}
\State $w^n \gets w^{n-1} + d^{n,k}$
\EndIf
\State $k \gets k + 1$
\Until{$\frac{j(w^{n - 1}) + \alpha \TV(w^{n-1}) - j(w^{n-1} + d^{n,k - 1}) - \alpha \TV(w^{n-1} + d^{n,k - 1})}{-\ell(w^{n-1},d^{n,k-1})} \ge \sigma$}\label{ln:inner_loop_condition}
\EndFor
\end{algorithmic}
\end{algorithm}
The main known convergence result on \Cref{alg:slip} to this point is
that its iterates converge to so-called L-stationary points under a
suitable regularity assumption on $j$
\cite{leyffer2022sequential}. A feasible point $w$ is L-stationary
if the objective \eqref{eq:p} cannot be improved
further by perturbing the locations of its jumps on $(0,T)$. Such perturbations leave the term $\alpha \TV(w)$ unchanged
and only affect $j(w)$, yielding a condition on $\nabla j(w)$. In particular, the condition coincides with
$\nabla j(w)(t_i) = 0$ for all jump locations $t_i$ of $w$ if $\nabla j(w)$ is a continuous function.
The formal definition is given below.
\begin{definition}\label{dfn:l_stationary}
Let $v \in \BVW(0,T)$ with representation
$v = \chi_{(t_0,t_1)} a_1 + \sum_{i=1}^{N-1} \chi_{[t_i,t_{i+1})} a_{i+1}$
for $N \in \N$, $t_0 = 0$, $t_N = T$, $t_i < t_{i+1}$ for $i \in \{0,\ldots,N-1\}$,
and $a_i \in W$ for $i \in \{1,\ldots,N\}$, $a_i \neq a_{i+1}$,
be given. Let $j: L^2(0,T) \to \mathbb{R}$ be Fr\'{e}chet
differentiable, in particular $\nabla j(v) \in L^1(0,T)$.
Then $v$ is L-stationary for \eqref{eq:p} if
\begin{enumerate}
\setlength\itemsep{.5em}
\item $\overline{D^-_i}(\nabla j(v))
       \ge 0 \ge \underline{D_i^+}(\nabla j(v))$
       if $a_i < a_{i+1}$, and
\item $\underline{D^-_i}(\nabla j(v))
       \le 0 \le \overline{D_i^+}(\nabla j(v))$ if $a_{i+1} < a_i$,
\end{enumerate}
where
\[ \overline{D_{i}^-} (\nabla j(v)) \coloneqq \limsup_{h \downarrow 0} \frac{1}{h}\int_{t_i - h}^{t_i} \nabla j(v)(s)\dd s,\quad
   \underline{D_{i}^-} (\nabla j(v)) \coloneqq \liminf_{h \downarrow 0}  \frac{1}{h}\int_{t_i - h}^{t_i} \nabla j(v)(s)\dd s 
\]
and
\[ \underline{D_{i}^+} (\nabla j(v)) \coloneqq \liminf_{h \downarrow 0}	 \frac{1}{h}\int_{t_i}^{t_i+h} \nabla j(v)(s)\dd s,\quad
   \overline{D_{i}^+} (\nabla j(v)) \coloneqq \limsup_{h \downarrow 0} \frac{1}{h}\int_{t_i}^{t_i+h} \nabla j(v)(s)\dd s
\]
for $i \in \{1,\ldots,N-1\}$.
\end{definition}
Note that \Cref{dfn:l_stationary} is well posed because every function in $\BVW(0,T)$ can be written in the
claimed form, see, for example, \cite[Proposition 4.4]{leyffer2022sequential}.

\section{Input Regularization}\label{sec:input_regularization}
This section is structured as follows. First, we recall the regularity assumptions imposed on the Hessian of the reduced objective $j$ introduced in \cite{leyffer2022sequential}.  Under these assumptions, convergence to L-stationary points
of the iterates produced by \Cref{alg:slip} can be achieved. 
Secondly, motivated by our applications, we introduce weaker assumptions
on the Hessian's regularity and show that the required regularity 
assumptions for convergence of \Cref{alg:slip} can always be
verified by  regularizing (smoothing) the input of $j$ provided that
these weaker assumptions hold.
 Then we prove
$\Gamma$-convergence in the case when the regularization
is carried out by a positive mollifier and when driving the support parameter to 
zero. We also show that the limits of the final iterations of \Cref{alg:slip}
(under realistic termination criteria), which are in general not global
minimizers but only (approximately) L-stationary points, are strict.
\begin{assumption}[Assumption 4.1 in \cite{leyffer2022sequential}]
\label{ass:strong_assumption}
Let $j : L^2(0,T) \to \R$ be twice Fr\'{e}chet differentiable. Moreover, for all $w \in L^2(0,T)$
\[ |\nabla^2 j(w)(\psi,\phi)| \le C \|\psi\|_{L^1(0,T)}\|\phi\|_{L^1(0,T)} \]
holds for some $C>0$ and all $\psi,\phi \in L^2(0,T)$.
\end{assumption}
Under \cref{ass:strong_assumption} on the reduced objective
it can be shown that the iterates produced by \Cref{alg:slip}
converge to L-stationary points \cite{leyffer2022sequential}.
We remark here that in \cite{leyffer2022sequential} it is assumed that $j$ is not only twice Fr\'{e}chet
differentiable,  but that the second derivative $w \mapsto \nabla^2 j(w)$ is also continuous. This
assumption enters the proofs of Lemma 4.10 and Theorem 4.23 in \cite{leyffer2022sequential} 
indirectly through the employed variant of Taylor's theorem,
Proposition A.1 in \cite{leyffer2022sequential}, which
states that for all $u$, $v \in L^2(0,T)$ there exists $\xi$ in the
line segment between $u$ and $v$ such that
$j(v) = j(u) + (\nabla j(u), v - u)_{L^2(0,T)} +
\tfrac{1}{2}(v - u, \nabla^2 j(\xi)(v - u))_{L^2(0,T)}$ holds. Below we show that this result
can be recovered in the absence of the assumption of continuity of
$w \mapsto \nabla^2 j(w)$. This has been
observed in \cite[Section 3.2]{marko2022integer}, where a slightly 
different formulation of the Taylor expansion is used.
\footnote{The authors thank Gerd Wachsmuth 
for the hint to Darboux's theorem and \cite{apostol1974mathematical}.}
\begin{proposition}\label{prp:improved_taylor}
Let $j : L^2(0,T) \to \R$ be twice Fr\'{e}chet differentiable. 
Let $w$, $\bar{w} \in L^2(0,T)$ be given. Then there exists
$\xi = w + \tau (\bar{w} - w)$ for some $\tau \in [0,1]$
such that $j(\bar{w}) = j(w) + (\nabla j(w), \bar{w} - w)_{L^2(0,T)} +
\tfrac{1}{2}(\bar{w} - w, \nabla^2 j(\xi)(\bar{w} - w))_{L^2(0,T)}$.
\end{proposition}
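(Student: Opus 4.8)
The plan is to reduce the statement to the scalar Taylor theorem with Lagrange remainder along the line segment joining $w$ and $\bar w$. Set $v \coloneqq \bar w - w \in L^2(0,T)$ and define $\varphi \colon [0,1] \to \R$ by $\varphi(\tau) \coloneqq j(w + \tau v)$. Then the assertion is exactly that $\varphi(1) = \varphi(0) + \varphi'(0) + \tfrac12 \varphi''(\tau)$ for some $\tau \in [0,1]$, once we identify $\varphi'(0) = (\nabla j(w), v)_{L^2(0,T)}$ and $\varphi''(\tau) = (v, \nabla^2 j(w + \tau v) v)_{L^2(0,T)}$ and set $\xi \coloneqq w + \tau v$.

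First I would pin down the regularity of $\varphi$. The chain rule for Fréchet derivatives applied to the affine curve $\tau \mapsto w + \tau v$ gives that $\varphi$ is differentiable with $\varphi'(\tau) = (\nabla j(w+\tau v), v)_{L^2(0,T)}$. Because $j$ is twice Fréchet differentiable, $u \mapsto \nabla j(u)$ is Fréchet differentiable on $L^2(0,T)$, hence continuous; composing it with the affine curve and with the bounded linear functional $\ell \mapsto \langle \ell, v\rangle$ shows that $\varphi'$ is continuous on $[0,1]$ and, differentiating once more, that $\varphi''$ exists on all of $[0,1]$ with $\varphi''(\tau) = (v, \nabla^2 j(w+\tau v) v)_{L^2(0,T)}$. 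Thus $\varphi \in C^1([0,1])$ and $\varphi''$ exists everywhere — and, crucially, this uses twice Fréchet differentiability of $j$ but not continuity of $w \mapsto \nabla^2 j(w)$.

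Second I would invoke the one-dimensional Taylor theorem of order two with Lagrange remainder: if $\varphi \in C^1([0,1])$ and $\varphi''$ exists on $(0,1)$, then there is $\tau \in (0,1) \subseteq [0,1]$ with $\varphi(1) = \varphi(0) + \varphi'(0) + \tfrac12 \varphi''(\tau)$. This requires no continuity of $\varphi''$: one proof applies Rolle's theorem to the function $g$ defined by $g(s) \coloneqq \varphi(1) - \varphi(s) - \varphi'(s)(1-s) - \tfrac{M}{2}(1-s)^2$, where $M$ is chosen so that $g$ vanishes at $s = 0$ (it automatically vanishes at $s = 1$), which yields $g'(\tau) = (M - \varphi''(\tau))(1-\tau) = 0$ and hence $\varphi''(\tau) = M = 2\bigl(\varphi(1) - \varphi(0) - \varphi'(0)\bigr)$; alternatively one combines the Cauchy mean value theorem with the mean value theorem applied to $\varphi'$, or uses the intermediate value property of derivatives (Darboux's theorem, cf.\ \cite{apostol1974mathematical}). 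Substituting the identifications from the first step and setting $\xi = w + \tau v$ finishes the proof.

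I do not expect a genuine obstacle here; the only thing to be careful about is the hypothesis check in the second step, namely that $\varphi'$ is continuous and that $\varphi''$ exists pointwise. Continuity of $\varphi'$ is the place where one might naively invoke continuity of $w \mapsto \nabla^2 j(w)$, but it is in fact already forced by twice Fréchet differentiability of $j$ (which makes $\nabla j$ continuous), so no assumption beyond the stated one is needed.
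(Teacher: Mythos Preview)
Your proposal is correct and follows essentially the same approach as the paper: reduce to the scalar function $\varphi(\tau)=j(w+\tau(\bar w-w))$ via the chain rule, then invoke a one-dimensional Taylor theorem with Lagrange remainder that does not require continuity of $\varphi''$. The paper simply cites \cite[Theorem 5.19]{apostol1974mathematical} (the Darboux-based version you mention as an alternative), whereas you spell out the Rolle's-theorem argument explicitly and are more careful about the regularity of $\varphi$; otherwise the proofs are the same.
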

\begin{proof}
We reduce the problem to the finite-dimensional case by considering the function
$\tilde{j} : [0,1] \ni t \mapsto j(w + t (\bar{w} - w)) \in \R$. The chain rule
in Banach spaces \cite[Theorem 4.D]{zeidler2012applied}
implies that $\tilde{j}''(t) = (\bar{w} - w, \nabla^2 j(w + t(\bar{w} - w))(\bar{w} - w))_{L^2(\Omega)}$ for $t \in [0,1]$. Then we apply
a variant of Taylor's theorem that is based on Darboux's theorem
and does not require continuity of the second derivative \cite[Theorem 5.19]{apostol1974mathematical},
to obtain that there exists $\tau \in [0,1]$ such that
$j(\bar{w}) = j(w) + (\nabla j(w), \bar{w} - w)_{L^2(\Omega)} +
\tfrac{1}{2}(\bar{w} - w, \nabla^2 j(w + \tau(\bar{w} - w))(\bar{w} - w))_{L^2(\Omega)}$,
which proves the claim with the choice $\xi \coloneqq w + \tau(\bar{w} - w)$.
\end{proof}
From our point of view, the most restrictive part of \cref{ass:strong_assumption} is
the boundedness of the bilinear form with respect to the product of the $L^1$-norms of the inputs, which
is required for the convergence analysis of \Cref{alg:slip} in \cite{leyffer2022sequential,manns2023integer}.
This is because in the setting of $W$-valued controls, $|W| < \infty$, the authors of \cite{leyffer2022sequential,manns2023integer}
are able to construct functions $d$ such that in a small neighborhood $(\nabla j(w), d)_{L^2}$ decreases at least linearly with respect
to $\|d\|_{L^1}$ and the $w + d$ are feasible if $w$ is not stationary. However, $|W| < \infty$ also implies $\|d\|_{L^1} = \Theta(\|d\|_{L^2}^2)$,
even $\|d\|_{L^1} = \|d\|_{L^2}^2$ if $W = \{0,1\}$. Therefore, in order to dominate the quadratic term in the proofs, the
boundedness with respect to the product of the $L^1$-norms is assumed, see also the related comments in \cite{leyffer2022sequential,manns2023integer}.
A related assumption (Lipschitz continuity of the derivative) is made in (5), (10) in \cite{hahn2023binary} (note that the abstract space $Y$ therein
becomes $L^1$ for the examples).
However, \cref{ass:strong_assumption} may be considered to be too restrictive for practical
applications of \Cref{alg:slip} because it requires an improvement of the input regularity
of the control-to-state operator.
In particular, as we also experience for the considered poroviscoelastic
problem in \Cref{sec:biots_model}, one may only be able to verify one
of the weaker assumptions below, where the uniform boundedness of the 
Hessian of $j$ is assumed with respect to stronger norms for the
control input.
\begin{assumption}\label{ass:weak_assumption}
Let $j : L^2(0,T) \to \R$ be twice Fr\'{e}chet differentiable such that for all $w \in L^2(0,T)$
\[ |\nabla^2 j(w)(\psi,\phi)| \le C \|\psi\|_{L^2(0,T)}\|\phi\|_{L^2(0,T)} \]
holds for some $C>0$ and all $\psi,\phi \in L^2(0,T)$.
\end{assumption}

\begin{assumption}\label{ass:weak_assumption_H1}
Let $j : H^1(0,T) \to \R$ be twice Fr\'{e}chet differentiable
such that for all $w \in H^1(0,T)$
\[ |\nabla^2 j(w)(\psi,\phi)| \le C \|\psi\|_{H^1(0,T)}\|\phi\|_{H^1(0,T)} \]
holds for some $C>0$ and all $\psi,\phi \in H^1(0,T)$.
\end{assumption}

\medskip

\underline{Regularization of control inputs.}  
Let $j_\varepsilon$ be defined as follows: 
\begin{equation}\label{jep}
 j_\varepsilon \coloneqq j \circ K_\varepsilon,
\end{equation}
where $K_\varepsilon :  L^1(0,T) \to L^2(0,T)$ is
a (bounded and linear) convolution operator defined as
\begin{equation}\label{Kep}
K_\varepsilon(w) \coloneqq r_{[0,T]}(\eta_\varepsilon * w), \ \text{for any}\ w \in L^1(0,T),
\end{equation}
where $(\eta_\varepsilon)_{\varepsilon > 0}$ is a family of positive 
mollifiers \cite[Section 4.5]{maggi2012sets}, and $r_{[0,T]}$ denotes the
restriction of a function defined on all of $\R$ to the interval 
$[0,T]$. For the sake of the convolution being well-defined we assume 
that all $w \in L^1(0,T) $ are extended to $0$ outside of $[0,T]$ when 
passed into the convolution operation.

Then the following three propositions hold.
\begin{proposition}\label{prp:relax_assumption}
Let $\varepsilon > 0$. Then $K_\varepsilon : L^1(0,T) \to L^2(0,T)$  defined above in \eqref{Kep}
is a bounded linear operator. If $j$ satisfies \cref{ass:weak_assumption},
then $j_\varepsilon$ satisfies \Cref{ass:strong_assumption} with some $C = C_\varepsilon$ that depends on $\varepsilon$.
\end{proposition}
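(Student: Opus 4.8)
The plan is to establish two things: that $K_\varepsilon$ maps $L^1(0,T)$ boundedly into $L^2(0,T)$, and that composing with $K_\varepsilon$ upgrades the $L^2$-boundedness of $\nabla^2 j$ (\Cref{ass:weak_assumption}) to the $L^1$-boundedness required by \Cref{ass:strong_assumption}, at the cost of an $\varepsilon$-dependent constant.

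First I would verify boundedness of $K_\varepsilon$. Since $\eta_\varepsilon$ is a fixed positive mollifier, it lies in $C_c^\infty(\R) \subset L^2(\R)$, so for $w \in L^1(0,T)$ (extended by zero) Young's inequality \eqref{Yineq} with $p = 1$, $q = 2$, $r = 2$ gives $\|\eta_\varepsilon * w\|_{L^2(\R)} \le C\|\eta_\varepsilon\|_{L^2(\R)}\|w\|_{L^1(0,T)}$. Restricting to $[0,T]$ only decreases the norm, so $\|K_\varepsilon w\|_{L^2(0,T)} \le C_\varepsilon \|w\|_{L^1(0,T)}$ with $C_\varepsilon \coloneqq C\|\eta_\varepsilon\|_{L^2(\R)}$. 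Linearity is immediate since convolution and restriction are linear. In particular $K_\varepsilon$ is also bounded as a map $L^2(0,T) \to L^2(0,T)$ (by Young's inequality again, or by $L^2 \hookrightarrow L^1$ on the bounded interval), which is what is needed for differentiability of the composition.

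Next I would handle the differentiability and the Hessian bound. Because $K_\varepsilon$ is bounded and linear, it is smooth as a map between the relevant spaces, and the chain rule in Banach spaces (as already invoked for \Cref{prp:improved_taylor} via \cite[Theorem 4.D]{zeidler2012applied}) yields that $j_\varepsilon = j \circ K_\varepsilon$ is twice Fr\'echet differentiable on $L^2(0,T)$ with $\nabla^2 j_\varepsilon(w)(\psi,\phi) = \nabla^2 j(K_\varepsilon w)(K_\varepsilon \psi, K_\varepsilon \phi)$ for all $w,\psi,\phi \in L^2(0,T)$; here I use that the first derivative of the linear map $K_\varepsilon$ is $K_\varepsilon$ itself and the second derivative vanishes, so no extra terms appear. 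Then applying \Cref{ass:weak_assumption} to the point $K_\varepsilon w \in L^2(0,T)$ and the directions $K_\varepsilon\psi, K_\varepsilon\phi$ gives
\[
|\nabla^2 j_\varepsilon(w)(\psi,\phi)| \le C\|K_\varepsilon\psi\|_{L^2(0,T)}\|K_\varepsilon\phi\|_{L^2(0,T)} \le C\,C_\varepsilon^2\,\|\psi\|_{L^1(0,T)}\|\phi\|_{L^1(0,T)},
\]
using the boundedness estimate from the first step for each factor. Setting the constant in \Cref{ass:strong_assumption} to $C C_\varepsilon^2$ (which indeed depends on $\varepsilon$ through $\|\eta_\varepsilon\|_{L^2}$, blowing up as $\varepsilon \downarrow 0$) finishes the proof.

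I do not expect a genuine obstacle here; the argument is essentially bookkeeping with Young's inequality and the chain rule. The only point requiring a little care is making the domains consistent: \Cref{ass:weak_assumption} is phrased for $j$ defined on $L^2(0,T)$, so one must check that $K_\varepsilon$ indeed lands in $L^2(0,T)$ (done in step one) before the composition and the chain-rule computation are legitimate, and one should note that the zero-extension convention fixed after \eqref{Kep} is what makes the convolution well-defined on $\R$ so that Young's inequality applies.
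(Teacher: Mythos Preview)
Your proposal is correct and follows essentially the same route as the paper: Young's convolution inequality gives the $L^1\to L^2$ bound for $K_\varepsilon$, and the chain rule reduces $\nabla^2 j_\varepsilon(w)(\psi,\phi)$ to $\nabla^2 j(K_\varepsilon w)(K_\varepsilon\psi,K_\varepsilon\phi)$, after which \Cref{ass:weak_assumption} and the $K_\varepsilon$ bound yield the claim. The only cosmetic difference is that the paper records the derivatives via the adjoint $K_\varepsilon^*$ (writing $\nabla^2 j_\varepsilon(w)=K_\varepsilon^*\nabla^2 j(K_\varepsilon w)K_\varepsilon$), which is the same computation packaged differently.
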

\begin{proof}
Let $w \in L^1(0,T)$. Let $K_\varepsilon^* : L^2(0,T) \to L^\infty(0,T)$
denote the adjoint operator of the bounded linear operator $K_\varepsilon$,
where we have identified $(L^2(0,T))^* \cong L^2(0,T)$
and $(L^1(0,T))^* \cong L^\infty(0,T)$.
The chain rule, see, \cite[Theorem 4.D]{zeidler2012applied},
yields the derivatives
\[\nabla j_\varepsilon(w) = K_\varepsilon^* \nabla j(K_\varepsilon w)
\quad \text{and}\quad \nabla^2 j_\varepsilon(w) = 
\langle K_\varepsilon^* \nabla^2 j(K_\varepsilon w)K_\varepsilon \cdot, 
\cdot \rangle_{L^\infty(0,T), L^1(0,T)}.\]
Let $\phi$, $\psi \in L^1(0,T).$  Cauchy--Schwarz inequality
and the submultiplicativity of the operator norm give
\[ \langle K_\varepsilon^* \nabla^2 j(K_\varepsilon w)K_\varepsilon \psi, 
\phi \rangle_{L^\infty(0,T), L^1(0,T)}
= (\nabla^2 j(K_\varepsilon w)K_\varepsilon \psi, 
K_\varepsilon \phi)_{L^2(0,T)}
\le C \|K_\varepsilon \psi\|_{L^2(0,T)}\|K_\varepsilon \phi\|_{L^2(0,T)}
\]
where $C > 0$ is the constant from \Cref{ass:weak_assumption}.
By virtue of Young's convolution inequality, we obtain 
$\|K_\varepsilon \phi\|_{L^2(0,T)}
\le \|\eta_\varepsilon\|_{L^2(\R)}\|\phi\|_{L^1(0,T)}$.
Because $(\eta_\varepsilon)_{\varepsilon > 0}$ is a family of mollifiers
we have that $\|\eta_\varepsilon\|_{L^2(\R)} < \infty$ for all
$\varepsilon > 0$, and thus the claims follow.
\end{proof}
\begin{proposition}\label{prp:relax_assumption_H1}
Let $\varepsilon > 0$. Then $K_\varepsilon : L^1(0,T) \to H^1(0,T)$  defined above in \eqref{Kep}
is a bounded linear operator. If $j$ satisfies \Cref{ass:weak_assumption_H1},
then $j_\varepsilon$ satisfies \Cref{ass:strong_assumption} with some $C = C_\varepsilon$ that depends on $\varepsilon$.
\end{proposition}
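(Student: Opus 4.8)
The plan is to follow the proof of \Cref{prp:relax_assumption} almost verbatim; the only genuinely new ingredient is the mapping property $K_\varepsilon : L^1(0,T) \to H^1(0,T)$, so I would establish that first. I extend $w \in L^1(0,T)$ by zero to an element of $L^1(\R)$ with support in $[0,T]$. Since every mollifier $\eta_\varepsilon$ is smooth with compact support, the convolution $\eta_\varepsilon * w$ is smooth on $\R$ with compact support, its classical derivative is $\eta_\varepsilon' * w$, and restricting this identity to $[0,T]$ gives the weak derivative $(K_\varepsilon w)' = r_{[0,T]}(\eta_\varepsilon' * w)$. Young's inequality \eqref{Yineq} with $p = 1$, $q = r = 2$ then yields $\|\eta_\varepsilon * w\|_{L^2(\R)} \le C\|\eta_\varepsilon\|_{L^2(\R)}\|w\|_{L^1(0,T)}$ and $\|\eta_\varepsilon' * w\|_{L^2(\R)} \le C\|\eta_\varepsilon'\|_{L^2(\R)}\|w\|_{L^1(0,T)}$, and since $\eta_\varepsilon$ is smooth and compactly supported both $\|\eta_\varepsilon\|_{L^2(\R)}$ and $\|\eta_\varepsilon'\|_{L^2(\R)}$ are finite. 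Because the $H^1(0,T)$-norm of a restriction is dominated by the $H^1(\R)$-norm, I obtain $\|K_\varepsilon w\|_{H^1(0,T)} \le C_\varepsilon \|w\|_{L^1(0,T)}$ with $C_\varepsilon \coloneqq C\bigl(\|\eta_\varepsilon\|_{L^2(\R)}^2 + \|\eta_\varepsilon'\|_{L^2(\R)}^2\bigr)^{1/2}$, which proves that $K_\varepsilon$ is bounded and linear into $H^1(0,T)$.

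Next, because $L^2(0,T) \hookrightarrow L^1(0,T)$ on the bounded interval $(0,T)$, the operator $K_\varepsilon$ restricts to a bounded linear map $L^2(0,T) \to H^1(0,T)$, so $j_\varepsilon = j \circ K_\varepsilon$ is well defined on $L^2(0,T)$. Twice Fr\'{e}chet differentiability of $j_\varepsilon$ on $L^2(0,T)$ and the derivative formulas then follow from the chain rule \cite[Theorem 4.D]{zeidler2012applied}, exactly as in \Cref{prp:relax_assumption}, now using the adjoint $K_\varepsilon^* : (H^1(0,T))^* \to L^\infty(0,T)$ together with the identification $(L^1(0,T))^* \cong L^\infty(0,T)$; in particular $\nabla^2 j_\varepsilon(w)(\psi,\phi) = \nabla^2 j(K_\varepsilon w)(K_\varepsilon \psi, K_\varepsilon \phi)$ for all $w$, $\psi$, $\phi \in L^2(0,T)$.

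Finally, I would combine the boundedness of the bilinear form from \Cref{ass:weak_assumption_H1} with the operator-norm bound just established: for all $w$, $\psi$, $\phi \in L^2(0,T)$,
\[
|\nabla^2 j_\varepsilon(w)(\psi,\phi)| = |\nabla^2 j(K_\varepsilon w)(K_\varepsilon \psi, K_\varepsilon \phi)| \le C\|K_\varepsilon\psi\|_{H^1(0,T)}\|K_\varepsilon\phi\|_{H^1(0,T)} \le C C_\varepsilon^2 \|\psi\|_{L^1(0,T)}\|\phi\|_{L^1(0,T)},
\]
which is precisely \Cref{ass:strong_assumption} with constant $C C_\varepsilon^2$. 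I expect the only delicate point to be the very first step---verifying that $\eta_\varepsilon * w$ genuinely lies in $H^1$ with weak derivative $\eta_\varepsilon' * w$ and that the restriction operator $r_{[0,T]}$ preserves this---whereas the chain-rule computation and the final estimate are routine adaptations of the proof of \Cref{prp:relax_assumption}.
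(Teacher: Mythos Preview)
Your proposal is correct and follows essentially the same approach as the paper: both proofs reduce to showing $\|K_\varepsilon \phi\|_{H^1(0,T)} \le C_\varepsilon \|\phi\|_{L^1(0,T)}$ via the derivative formula $(\eta_\varepsilon * \phi)' = \eta_\varepsilon' * \phi$ together with Young's convolution inequality, and then invoke the chain-rule argument from \Cref{prp:relax_assumption} verbatim. Your write-up is somewhat more explicit about the restriction operator and the embedding $L^2(0,T) \hookrightarrow L^1(0,T)$, but these are exactly the points the paper leaves implicit when it says ``the proof is very similar to that of \Cref{prp:relax_assumption}.''
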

\begin{proof}
The proof is very similar to that of \Cref{prp:relax_assumption} above. 
We need to show that
\[ \|K_\varepsilon \phi\|_{H^1}
   = \left(\|K_\varepsilon \phi\|_{L^2}^2 + 
           \left\|\frac{\dd}{\dd t} K_\varepsilon \phi\right\|_{L^2}^2\right)^{\frac{1}{2}}
   \le C \|\phi\|_{L^1}, \]
for some $C > 0$. From \Cref{prp:relax_assumption} we  already have that
$\|K_\varepsilon \phi\|_{L^2} \le c_1  \|\phi\|_{L^1}$. Now using the formula for the derivative of a convolution $\frac{\dd}{\dd t} (\eta_\varepsilon * \phi)
= \left(\frac{\dd}{\dd t} \eta_\varepsilon\right) * \phi$ and Young's convolution inequality, we obtain that  
$\left\|\frac{\dd}{\dd t}K_\varepsilon \phi\right\|_{L^2} \le c_2  \|\phi\|_{L^1}$ holds with
$c_2 \coloneqq \left\|\frac{\dd}{\dd t} \eta_\varepsilon\right\|_{L^2}$,
which is bounded because $\eta_\varepsilon$ is smooth.
\end{proof}

\begin{proposition}\label{prp:smoothing_grad}
Let $(\eta_\varepsilon)_{\varepsilon > 0}$ be a 
family of standard mollifiers. Let  $j_\varepsilon$ be defined as above 
in \eqref{jep}. Let $w \in L^2(0,T)$. Then
$\nabla j_\varepsilon(w) = K_\varepsilon^* \nabla j(K_\varepsilon w) 
\in L^\infty(0,T)$ is a continuous function on $[0, T]$.
\end{proposition}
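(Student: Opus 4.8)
The plan is to take the identity $\nabla j_\varepsilon(w) = K_\varepsilon^*\nabla j(K_\varepsilon w)$ — which is produced by the chain rule exactly as in the proof of \Cref{prp:relax_assumption}, with $K_\varepsilon^* : L^2(0,T) \to L^\infty(0,T)$ the adjoint of the bounded operator $K_\varepsilon : L^1(0,T) \to L^2(0,T)$ from \eqref{Kep}, and with $\nabla j(K_\varepsilon w)$ a well-defined element of $(L^2(0,T))^* \cong L^2(0,T)$ because $j$ is Fréchet differentiable — and then to read continuity off the smoothing properties of the mollifier. Thus it suffices to show that $K_\varepsilon^*$ maps $L^2(0,T)$ into $C([0,T])$ and that the continuous function it produces is the correct representative of $\nabla j_\varepsilon(w)$.

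To that end I would compute $K_\varepsilon^*$ by hand. Writing $\tilde u$ for the extension of $u$ by zero to $\R$, for $u \in L^1(0,T)$ and $g \in L^2(0,T)$ one has
\[
 (K_\varepsilon u, g)_{L^2(0,T)} = \int_0^T\!\!\int_{\R} \eta_\varepsilon(t - s)\,\tilde u(s)\dd s\; g(t)\dd t
 = \int_0^T u(s)\left(\int_0^T \eta_\varepsilon(t - s)\, g(t)\dd t\right)\dd s,
\]
where the interchange of the order of integration is justified by Fubini's theorem ($\eta_\varepsilon \in L^\infty(\R)$ has compact support, $\tilde u \in L^1(\R)$, and $g \in L^2(0,T) \subset L^1(0,T)$). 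Hence $(K_\varepsilon^* g)(s) = \int_0^T \eta_\varepsilon(t - s)\, g(t)\dd t$, that is, $K_\varepsilon^* g = r_{[0,T]}\bigl(\check\eta_\varepsilon * \tilde g\bigr)$, where $\check\eta_\varepsilon(x) \coloneqq \eta_\varepsilon(-x)$ is again a smooth, compactly supported function and $\tilde g$ is the zero extension of $g$, which lies in $L^1(\R)$ since it has compact support and $g \in L^2$. (The $L^1$-adjoint computed here and the $L^2$-adjoint delivered by the chain rule agree as functions; the point of the $L^1$-version is precisely the extra integrability.)

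Finally, since $\check\eta_\varepsilon \in C^\infty_c(\R)$ and $\tilde g \in L^1(\R)$, the convolution $\check\eta_\varepsilon * \tilde g$ belongs to $C^\infty(\R)$; in particular its restriction to $[0,T]$ is continuous there, and Young's convolution inequality \eqref{Yineq} gives $\|\check\eta_\varepsilon * \tilde g\|_{L^\infty(\R)} \le C\|\eta_\varepsilon\|_{L^2(\R)}\|g\|_{L^2(0,T)} < \infty$. Applying this with $g = \nabla j(K_\varepsilon w)$ yields the assertion. I do not expect a genuine obstacle here: the only care required is the bookkeeping with the zero-extension and restriction operators so that the adjoint formula is literally correct, together with the observation that the pointwise expression $(K_\varepsilon^* g)(s) = \int_0^T \eta_\varepsilon(t-s)g(t)\dd t$ \emph{defines} the continuous representative of the a priori only almost-everywhere-defined $\nabla j_\varepsilon(w) \in L^\infty(0,T)$. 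If one wanted more, differentiating under the integral sign in the same formula shows that $K_\varepsilon^* g$ is in fact smooth on $(0,T)$, but continuity on $[0,T]$ is all that is claimed.
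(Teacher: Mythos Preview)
Your proposal is correct and follows essentially the same approach as the paper: compute $K_\varepsilon^*$ explicitly via Fubini's theorem to identify $(K_\varepsilon^* g)(s) = \int_0^T \eta_\varepsilon(t-s)g(t)\dd t$ as a mollification, and then invoke the standard smoothness of convolution with a mollifier. If anything, your bookkeeping with the zero-extension and the reflected kernel $\check\eta_\varepsilon$ is slightly more explicit than the paper's, which simply appeals to the ``structural symmetry of the mollification'' to transfer the smoothness argument from $K_\varepsilon$ to $K_\varepsilon^*$.
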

\begin{proof}
As above, the identification
$L^\infty(0,T) \cong (L^1(0,T))^*$ implies that we can
consider $K_\varepsilon^* \nabla j(K_\varepsilon w)$ as an $L^\infty(0,T)$-function. Moreover, $\nabla j(K_\varepsilon w)$
is an $L^2(0,T)$-function. Let $f \in L^1(0,T)$, $g \in L^2(0,T)$.
We consider the adjoint operator $K_\varepsilon^*$, which is defined by
the identity
\[ (K_\varepsilon f, g)_{L^2( 0,T )} = \langle f, K_\varepsilon^* g\rangle_{L^1(0,T),L^\infty(0,T)}.
\] 
We insert the definition of the convolution and obtain the following identity
\[ \int_0^T \overbrace{\int_0^T \eta_\varepsilon(t - s)f(s)\dd s}^{(K_\varepsilon f)(t)}g(t)\dd t = \int_0^T f(s) \overbrace{\int_0^T \eta_\varepsilon(t - s)g(t)\dd t}^{(K_\varepsilon^*g)(s)} \dd s
\]
by virtue of Fubini's theorem. $K_\varepsilon f \in C^\infty([0,T])$ 
holds as $\eta_\varepsilon * f$ is the convolution of $f$ and a 
mollifier, see Theorem 7 in \cite{evans2010partial}.
Due to the structural symmetry of the mollification (or also of the 
mollifiers themselves), the proof for smoothness of $K_\varepsilon f$
on $[0,T]$ can be transferred directly to $K_\varepsilon^* g$ on $[0,T]$.  
\end{proof}
Let $(P_{\varepsilon})$ be the optimization problem \eqref{eq:p} where 
the objective $j$ is replaced by $j_\varepsilon$. We note that the constant $C_\varepsilon$ asserted by 
\Cref{prp:relax_assumption,prp:relax_assumption_H1} blows up for $\varepsilon \searrow 0$
so that the property can not be carried over to the limit.
We are interested in the ability of local solutions, global solutions, and
stationary points of the new optimization problem $(P_{\varepsilon})$
to approximate local solutions, global solutions, and
stationary points of \eqref{eq:p} as $\varepsilon \to 0$.
While we are not able to give a full answer to the question at this 
point, we can provide a positive answer in  the case of global solutions 
by virtue of $\Gamma$-convergence. 
Assume that the objectives ($j_\varepsilon + \alpha \TV$) 
$\Gamma$-converge to ($j + \alpha \TV$) as $\varepsilon \to 0$ with 
respect to $\BV(0,T)$-weak-$^*$-convergence on the complete subspace 
$\BVW(0,T)$, which is the feasible set
of \eqref{eq:p}. Then we obtain that global minimizers of $j_\varepsilon$ in $\BVW(0,T)$
weakly-$^*$-converge to global minimizers of $j$ in $\BVW(0,T)$, which is
one of our main results.
\begin{theorem}\label{thm:gamma_convergence}
Let $j$ be continuous.
For $\varepsilon > 0$, let $(\eta_\varepsilon)_{\varepsilon > 0}$ be a
family of positive mollifiers. Let $K_\varepsilon$ be defined as above in \eqref{Kep}. 

Then the sequence $(\hat{j}_{\varepsilon^n})_{n\in \N}$,
defined as $\hat{j}_{\varepsilon^n} \coloneqq j_{\varepsilon^n} + \alpha \TV$, 
$\Gamma$-converges to $\hat{j} = j + \alpha \TV$ as $\varepsilon^n \to 0$
with respect to weak-$^*$-convergence and strict convergence in $\BVW(0,T)$.
\end{theorem}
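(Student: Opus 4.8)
The plan is to establish the two defining conditions of $\Gamma$-convergence separately: the $\liminf$ (lower-bound) inequality and the $\limsup$ (recovery-sequence/upper-bound) inequality, both with respect to $\BV(0,T)$-weak-$^*$ convergence on $\BVW(0,T)$ (the strict-convergence statement follows a fortiori, since strict convergence is stronger than weak-$^*$ convergence, so the same recovery sequences and the same lower bound apply). Since $\alpha\TV$ appears in both $\hat{j}_{\varepsilon^n}$ and $\hat{j}$ unchanged, and $\TV$ is weak-$^*$ lower semicontinuous on $\BV(0,T)$, it suffices to understand the behavior of $j_{\varepsilon^n} = j\circ K_{\varepsilon^n}$ relative to $j$ along weak-$^*$ convergent sequences in $\BVW(0,T)$. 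The key analytic fact I would isolate first: if $w^n \weakstarto w$ in $\BVW(0,T)$, then $w^n \to w$ in $L^1(0,T)$ (definition of weak-$^*$ convergence) and hence in $L^2(0,T)$ (by the $L^\infty$-bound argument spelled out in \Cref{sec:primer}); moreover $K_{\varepsilon^n}(w^n) \to w$ in $L^2(0,T)$ as well. This last convergence is the technical heart, and I would prove it by the triangle inequality $\|K_{\varepsilon^n}w^n - w\|_{L^2} \le \|K_{\varepsilon^n}(w^n - w)\|_{L^2} + \|K_{\varepsilon^n}w - w\|_{L^2}$: the first term is bounded by $\|\eta_{\varepsilon^n}\|_{L^1(\R)}\|w^n-w\|_{L^2} = \|w^n-w\|_{L^2} \to 0$ via Young's inequality \eqref{Yineq} (mollifiers have unit $L^1$-mass), and the second term is the standard fact that mollification converges to the identity in $L^2$ (with a harmless boundary-restriction check, since $w$ is extended by zero and then restricted to $[0,T]$ — here one uses that $w \in L^2$ and the support of $\eta_{\varepsilon^n}$ shrinks to $\{0\}$).

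Given this, the $\liminf$ inequality is immediate: for any sequence $w^n \weakstarto w$ in $\BVW(0,T)$, we have $K_{\varepsilon^n}w^n \to w$ in $L^2(0,T)$, so by continuity of $j$ on $L^2(0,T)$, $j_{\varepsilon^n}(w^n) = j(K_{\varepsilon^n}w^n) \to j(w)$; combined with $\liminf_n \alpha\TV(w^n) \ge \alpha\TV(w)$ (weak-$^*$ lsc of $\TV$), we get $\liminf_n \hat{j}_{\varepsilon^n}(w^n) \ge \hat{j}(w)$. For the $\limsup$ inequality, given $w \in \BVW(0,T)$ I would simply take the constant recovery sequence $w^n \equiv w$: then $w^n \weakstarto w$ trivially (indeed strictly), $K_{\varepsilon^n}w \to w$ in $L^2$ by the same mollification-convergence fact, so $j_{\varepsilon^n}(w) = j(K_{\varepsilon^n}w) \to j(w)$ by continuity of $j$, and $\TV(w^n) = \TV(w)$ for all $n$; hence $\limsup_n \hat{j}_{\varepsilon^n}(w^n) = \hat{j}(w)$, which is the required upper bound. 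The constant sequence works here precisely because the only perturbation is inside the argument of $j$ via $K_{\varepsilon^n}$, and that perturbation already vanishes without needing to move $w$.

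The main obstacle — and the only place requiring genuine care — is the joint convergence $K_{\varepsilon^n}w^n \to w$ in $L^2(0,T)$ when both the mollification parameter and the function vary simultaneously, particularly the interaction of the zero-extension/restriction operator $r_{[0,T]}$ with the convolution near the endpoints $0$ and $T$. I would handle this by keeping the two effects separate via the triangle inequality as above, so that the "moving function" part only ever sees $K_{\varepsilon^n}$ as a contraction on $L^2$ (uniformly in $n$, since $\|\eta_{\varepsilon^n}\|_{L^1}=1$), and the "moving mollifier" part only ever acts on the fixed limit $w$, for which approximate-identity convergence in $L^2(\R)$ (then restricted to $[0,T]$) is classical. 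One should also note that since we only range over a sequence $\varepsilon^n \to 0$ rather than a continuum, no uniformity in $\varepsilon$ beyond $\|\eta_{\varepsilon^n}\|_{L^1} = 1$ is needed, which is why the blow-up of the constants $C_{\varepsilon}$ from \Cref{prp:relax_assumption} is irrelevant to this $\Gamma$-convergence statement.
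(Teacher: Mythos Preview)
Your proposal is correct and follows essentially the same approach as the paper: the paper also proves the lower bound by showing $K_{\varepsilon^n}w^{\varepsilon^n}\to w$ in $L^2$ via the same triangle-inequality split (Young's inequality for the first term with $\|\eta_{\varepsilon^n}\|_{L^1}=1$, and standard mollifier convergence for the second), uses the constant recovery sequence $w^{\varepsilon^n}\equiv w$ for the upper bound, and deduces the strict-convergence statement from the fact that this recovery sequence is already strictly convergent. Your discussion of the boundary/restriction issue and the remark on why the blow-up of $C_\varepsilon$ is irrelevant are additional (correct) observations not spelled out in the paper.
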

\begin{proof}
We start by proving the lower bound inequality. To this end, let
$w^{\varepsilon^n} \weakstarto w$ in $\BVW(0,T)$. Then
$\TV(w) \le \liminf_{\varepsilon^n \to 0} \TV(w^{\varepsilon^n})$
by virtue of the weak-$^*$-lower semicontinuity of the
total variation. 

We now show that $j_{\varepsilon^n}(w^{\varepsilon^n}) \to j(w)$. Due to the continuity of $j$ it suffices to show that 
\[\| K_{\varepsilon^n} w^{\varepsilon^n} - w\|_{L^2(0,T)} \to 0. \]  We observe that
\[ \| K_{\varepsilon^n} w^{\varepsilon^n} -w \|_{L^2(0,T)} \leq  \|K_{\varepsilon^n} w^{\varepsilon^n} - K_{\varepsilon^n} w\|_{L^2(0,T)} + \|K_{\varepsilon^n} w - w\|_{L^2(0,T)}  .\]
Using Young's convolution inequality we obtain 
\[ \|K_{\varepsilon^n} w^{\varepsilon^n} - K_{\varepsilon^n} w\|_{L^2(0,T)} = \|K_{\varepsilon^n} (w^{\varepsilon^n} -  w)\|_{L^2(0,T) } \leq \| \eta_{\varepsilon^n} \|_{L^1(\mathbb{R})} \|w^{\varepsilon^n} -  w\|_{L^2(0,T)} \leq \|w^{\varepsilon^n} -  w\|_{L^2(0,T)} \to 0. \]
The convergence $  \|K_{\varepsilon^n} w - w\|_{L^2(0,T)} \to 0$ follows from Theorem 7, page 714, in \cite{evans2010partial}.

To prove the upper bound inequality, we choose $w^{\varepsilon^n} \coloneqq w$ for all $n \in \N$ and observe
\[ \hat{j}_{\varepsilon^n}(w^{\varepsilon^n})
= j_{\varepsilon^n}(w) + \alpha\TV(w)
\to j(w) + \alpha\TV(w)
\]
with the same argument as above. This proves $\Gamma$-convergence
with respect to weak-$^*$-convergence in $\BVW(0,T)$.
Because the chosen sequence for the upper bound inequality
is also strictly convergent, $\Gamma$-convergence also holds
with respect to strict convergence in $\BVW(0,T)$.
\end{proof}

\Cref{prp:relax_assumption} shows it is sufficient to
verify the much weaker assumptions \Cref{ass:weak_assumption}
or \Cref{ass:weak_assumption_H1}
instead of \Cref{ass:strong_assumption}
when we solve \eqref{eq:p} with the input regularization 
$j_\varepsilon$ instead of $j$ for some $\varepsilon > 0$. 
\Cref{prp:smoothing_grad} implies that $\nabla j_\varepsilon(w)$, with
$w \in \BVW(0,T)$, is a smooth function on the interval $[0,T]$. 
Thus we can use the characterization
\[ \nabla j_\varepsilon(w)(t_i) = 0 \]
for all of the finitely many $t_i$ where the function $w$ has a jump,
if $w$ is L-stationary for \eqref{eq:p} with $j_\varepsilon$ instead of $j$,
see \cite{leyffer2022sequential}.

While we obtain convergence of global minimizers
under \Cref{ass:weak_assumption}, we do not know at 
present if we obtain converge  to an L-stationary point of  
\eqref{eq:p} when we compute
L-stationary points for \eqref{eq:p} with  $j_\varepsilon$ instead of $j$
and drive $\varepsilon$ to zero. In particular,
we have not been able to show that the limit of 
L-stationary or approximately L-stationary points of \eqref{eq:p} with 
$j_\varepsilon$ that are produced by \Cref{alg:slip} for a homotopy that drives
$\varepsilon \to 0$ is L-stationary for \eqref{eq:p}.
We note that even if this were true, L-stationary is
not known to be a necessary optimality condition for 
\eqref{eq:p} if only \Cref{ass:weak_assumption}
but not \Cref{ass:strong_assumption} holds.

Moreover, the situation is even worse in case that we only have 
\Cref{ass:weak_assumption_H1}.
Because $H^1(0,T) \hookrightarrow C([0,T])$,
the only $W$-valued functions in $H^1(0,T)$ are constant on the whole 
the domain $(0,T)$, so we cannot prove a result like 
\Cref{thm:gamma_convergence} in this case and the limit problem 
$\varepsilon = 0$ has no meaningful interpretation due to the
high regularity that is required for the control input.
However, in case one is still interested in discrete-valued
controls even if this is not covered by the regularity theory
for the PDE, we believe that solving \eqref{eq:p} with $j_\varepsilon$
instead of $j$ is still sensible because for a given control
$w \in \BVW(0,T)$ we know that there are only finitely many
switches (or jumps) that can occur and by means of the parameter
$\varepsilon$ we can control the support size of the smooth
transitions between them that occurs when mollifying them.

However, the succeeding analysis shows that in case of 
\Cref{ass:weak_assumption}, weak-$^*$ limit points of the 
homotopy are also strict limit points, which means that the limit cannot
have a lower total variation than its approximating sequence.
Consequently, there is no nearby reduction of the objective function by
an improvement of the $\TV$-term.
This result is possible even in the presence of realistic early termination
criteria in \Cref{alg:slip}.

\begin{proposition}\label{prp:finite_termination_slip_w_extra_termination}
Let $j$ be bounded below. Assume that \Cref{alg:slip} is terminated
when one of the following conditions is met: 
\begin{itemize}
    \item the trust-region radius is smaller than a 
    given $C_1>0$
    \item the predicted reduction $-\ell(w^{n-1},d^{n,k})$ in outer iteration
    $n \in \N$ and inner iteration $k \in \N$
    is smaller than a given $C_2>0$,
\end{itemize}
then it terminates within finitely many outer
iterations regardless of the initial guess $w^0$.
\end{proposition}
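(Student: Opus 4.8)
The plan is to argue by contradiction: assume the algorithm runs for infinitely many outer iterations and derive a contradiction with the fact that $j$ is bounded below. The key observation is that once the inner loop at outer iteration $n$ terminates with an accepted step $d^{n,k-1}$, the sufficient-decrease condition on line~\ref{ln:inner_loop_condition} guarantees that the actual reduction in the objective $\hat j = j + \alpha\TV$ is at least $\sigma$ times the predicted reduction $-\ell(w^{n-1},d^{n,k-1})$. If the algorithm did not terminate at this outer iteration, then neither stopping criterion was triggered for the accepted step, so in particular the predicted reduction satisfies $-\ell(w^{n-1},d^{n,k-1}) \ge C_2$. Hence each completed outer iteration decreases $\hat j$ by at least $\sigma C_2 > 0$. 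Summing over infinitely many outer iterations would drive $\hat j(w^n) \to -\infty$, and since $\TV \ge 0$ this forces $j(w^n)\to-\infty$, contradicting that $j$ is bounded below. Therefore only finitely many outer iterations can be completed.

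The one gap in this sketch is that I must also rule out the possibility that the algorithm gets stuck in a single outer iteration forever, i.e.\ that the inner loop runs infinitely without ever accepting a step or terminating. This is where the trust-region radius criterion enters: in the inner loop, each rejected step halves $\Delta^{n,k}$, so after finitely many rejections we have $\Delta^{n,k} < C_1$ and the first termination criterion fires. Concretely, if $\Delta^0/2^k < C_1$ then the algorithm stops; this happens once $k > \log_2(\Delta^0/C_1)$. So the inner loop is guaranteed to halt after finitely many iterations — either by accepting a step, by hitting the predicted-reduction threshold $C_2$, or by shrinking the radius below $C_1$. In the latter two cases the algorithm terminates outright; in the former, the outer iteration completes and we are in the situation of the previous paragraph.

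Putting the two pieces together: every inner loop terminates in finitely many steps (trust-region radius criterion), and every outer iteration that does not trigger termination produces a decrease of at least $\sigma C_2$ in $\hat j$. Since $\hat j$ is bounded below (as $j$ is bounded below and $\alpha\TV \ge 0$), there can be only finitely many such decreasing outer iterations, after which the algorithm must terminate via one of the two stated criteria or via the original termination test on line~\ref{ln:terminate}. This holds independently of $w^0$, since the bound on the decrease per outer iteration and the bound on inner-loop length are both uniform. The main (very mild) obstacle is simply bookkeeping the interaction between the inner and outer loops carefully — in particular making sure that the accepted step, which by construction satisfies the sufficient-decrease test on line~\ref{ln:inner_loop_condition}, is also the step whose predicted reduction is compared against $C_2$, so that the per-iteration decrease of $\sigma C_2$ is legitimate.
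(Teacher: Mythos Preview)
Your proof is correct and follows essentially the same approach as the paper's: both arguments first use the trust-region radius criterion to bound the number of inner iterations (you get $k > \log_2(\Delta^0/C_1)$; the paper writes $\lfloor \log_2(C_1/\Delta^0)\rfloor$, apparently a typo), and then argue by contradiction that infinitely many accepted outer iterations would each decrease $j + \alpha\TV$ by at least $\sigma C_2$, contradicting that $j$ and $\TV$ are bounded below. Your explicit bookkeeping remark about matching the accepted step to the predicted-reduction test is a nice clarification but does not constitute a different route.
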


\begin{remark}\label{rem:termination_criteria}
Before proving \Cref{prp:finite_termination_slip_w_extra_termination},
we make a brief note to explicitly explain how these conditions enter 
\Cref{alg:slip}. The first condition is checked after each reduction of the trust-region radius together with \Cref{alg:slip}
ln.\ \ref{ln:inner_loop_condition}.
The second condition replaces the termination criterion
in \Cref{alg:slip} ln.\ \ref{ln:terminate}.
\end{remark}

\begin{proof}[Proof of \Cref{prp:finite_termination_slip_w_extra_termination}]
The first condition ensures that the inner loop iterates
only finitely many times, specifically
at most $\lfloor \log_2(C_1 / \Delta^0)\rfloor$ times.
Assume that \Cref{alg:slip} does not terminate within
finitely many iterations of the outer loop. Then the inner
loop accepts a new iterate within
$k \le \lfloor \log_2(C_1 / \Delta^0)\rfloor$ iterations
in each outer iteration $n \in \N$.
The step acceptance implies that the reduction in the 
objective is always higher than $-\sigma \ell(w^{n-1},d^{n,k})$.
It follows from the second termination criterion that
$-\ell(w^{n-1},d^{n,k}) \ge C_2$
for all $n \in \N$ and corresponding $k$ on acceptance.
This contradicts
that $j$ and $\TV$ are bounded from below. Consequently,
\Cref{alg:slip} terminates within finitely many
outer iterations.
\end{proof}

\begin{proposition}\label{prp:final_homotopy}
Let $j$ be bounded below.
Let \Cref{ass:weak_assumption} hold.
Let $(w_\varepsilon)_\varepsilon \subset \BVW(0,T)$
be the sequence of final iterates produced by \Cref{alg:slip}
executed on $j_\varepsilon$
for a sequence $\varepsilon \to 0$ with the initial control given by the previous  final iterate,
where the execution of \Cref{alg:slip} is terminated
if one of following conditions is met:
\begin{itemize}
    \item the trust region radius is smaller than $\Delta(\varepsilon)>0$, which tends to zero as $\varepsilon$ is driven to zero,
    \item the predicted reduction is smaller than $(1-\frac{3a}{4})\alpha>0$ for
    some fixed $0<a\leq 1-\sigma <1$.
\end{itemize}
If $\liminf_{\varepsilon \to 0} \TV(w_\varepsilon) < \infty$,
there is at least one weakly-$^*$ convergent subsequence.
Every weakly-$^*$ convergent subsequence converges strictly
to a limit point in $\BVW(0,T)$. 
\end{proposition}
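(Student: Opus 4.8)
\emph{Existence of a weak-$^*$ convergent subsequence.} This I would get by the standard $\BV$ compactness. Since $W$ is finite there is $c_W>0$ with $\|v\|_{L^\infty(0,T)}\le c_W$ for every $v\in\BVW(0,T)$, so $(w_\varepsilon)_\varepsilon$ is uniformly bounded in $L^1(0,T)$; picking $\varepsilon\to 0$ along a subsequence realizing $\liminf_{\varepsilon\to 0}\TV(w_\varepsilon)<\infty$ makes that subsequence bounded in $\BV(0,T)$, and the compact embedding $\BV(0,T)\hookrightarrow\hookrightarrow L^1(0,T)$ produces a further subsequence converging in $L^1(0,T)$ — hence weak-$^*$ in $\BV(0,T)$ — to some $w$, which lies in $\BVW(0,T)$ by the weak-$^*$ closedness recalled in \Cref{sec:primer}.

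\emph{Strictness: setup and the cheap correction.} For the second claim I would argue by contradiction. Fix $w_{\varepsilon_k}\weakstarto w$; then $w\in\BVW(0,T)$ (closedness), $w_{\varepsilon_k}\to w$ in $L^1(0,T)$ and hence in $L^2(0,T)$, and weak-$^*$ lower semicontinuity of $\TV$ gives $\TV(w)\le\liminf_k\TV(w_{\varepsilon_k})$, so it remains to exclude $\limsup_k\TV(w_{\varepsilon_k})>\TV(w)$. As $W\subset\Z$, the total variation of a function in $\BVW(0,T)$ is a non-negative integer (a finite sum of jump heights $\ge 1$), so I may pass to a further subsequence along which $\TV(w_{\varepsilon_k})\equiv L$ with $L\ge\TV(w)+1$. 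The key object is the feasible correction $d_k\coloneqq w-w_{\varepsilon_k}$: it satisfies $w_{\varepsilon_k}+d_k=w$, $\TV(w_{\varepsilon_k}+d_k)-\TV(w_{\varepsilon_k})=\TV(w)-L\le -1$, $\|d_k\|_{L^1(0,T)}\to 0$, and $\|d_k\|_{L^2(0,T)}^2\le 2c_W\|d_k\|_{L^1(0,T)}\to 0$. Writing $\ell_\varepsilon(v,d)\coloneqq(\nabla j_\varepsilon(v),d)_{L^2(0,T)}+\alpha\TV(v+d)-\alpha\TV(v)$ for the $\TR$-model built from $j_\varepsilon$, and using $\nabla j_{\varepsilon_k}(w_{\varepsilon_k})=K_{\varepsilon_k}^{*}\nabla j(K_{\varepsilon_k}w_{\varepsilon_k})$ (\Cref{prp:smoothing_grad}), Young's convolution inequality, the uniform bound $\|K_{\varepsilon_k}w_{\varepsilon_k}\|_{L^2(0,T)}\le c_W\sqrt T$, and the global Lipschitz estimate for $\nabla j$ coming from the uniform Hessian bound of \Cref{ass:weak_assumption}, I obtain
\[
\bigl|(\nabla j_{\varepsilon_k}(w_{\varepsilon_k}),d_k)_{L^2(0,T)}\bigr|=\bigl|(\nabla j(K_{\varepsilon_k}w_{\varepsilon_k}),K_{\varepsilon_k}d_k)_{L^2(0,T)}\bigr|\le M\,\|d_k\|_{L^2(0,T)}\longrightarrow 0,
\]
hence $-\ell_{\varepsilon_k}(w_{\varepsilon_k},d_k)\ge\alpha-M\|d_k\|_{L^2(0,T)}$, so for all large $k$ both $-\ell_{\varepsilon_k}(w_{\varepsilon_k},d_k)>(1-\tfrac{3a}{4})\alpha$ and $\|d_k\|_{L^1(0,T)}<\Delta^0$.

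\emph{Deriving the contradiction from the algorithm.} Consider the run of \Cref{alg:slip} on $j_{\varepsilon_k}$ that produced $w_{\varepsilon_k}$. In the outer iteration in which it terminates the current iterate is $w_{\varepsilon_k}$ and no step is accepted, so the inner loop solves $\TR(w_{\varepsilon_k},\Delta^0/2^i)$, $i=0,1,\dots$, rejecting every step until a stopping criterion fires. As long as $\Delta^0/2^i\ge\|d_k\|_{L^1(0,T)}$ the correction $d_k$ is admissible, so the optimal value obeys $-\ell_{\varepsilon_k}(w_{\varepsilon_k},d^{\cdot,i})\ge-\ell_{\varepsilon_k}(w_{\varepsilon_k},d_k)>(1-\tfrac{3a}{4})\alpha$; the predicted-reduction criterion therefore cannot fire at such radii, and rejection of $d^{\cdot,i}$, combined with the exact second-order expansion of \Cref{prp:improved_taylor} and \Cref{ass:strong_assumption} for $j_{\varepsilon_k}$ (valid with some $C_{\varepsilon_k}>0$ by \Cref{prp:relax_assumption}), forces $\tfrac12 C_{\varepsilon_k}(\Delta^0/2^i)^2\ge(1-\sigma)\bigl(-\ell_{\varepsilon_k}(w_{\varepsilon_k},d^{\cdot,i})\bigr)>(1-\sigma)(1-\tfrac{3a}{4})\alpha$, i.e.\ the radius at any rejected step exceeds $\rho_k\coloneqq\bigl(2(1-\sigma)(1-\tfrac{3a}{4})\alpha/C_{\varepsilon_k}\bigr)^{1/2}$. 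Hence, as soon as the halved radius enters $[\|d_k\|_{L^1(0,T)},\rho_k]$, the same Taylor estimate makes the step \emph{acceptable} (predicted reduction still above the tolerance, curvature term $\le(1-\sigma)(1-\tfrac{3a}{4})\alpha$, so the acceptance ratio is $\ge\sigma$), contradicting that no step is accepted — unless the trust-region criterion fired earlier, i.e.\ $\Delta(\varepsilon_k)$ exceeds that interval. The step I expect to be the main obstacle is precisely this last alternative: one must rule out that $\Delta(\varepsilon_k)$ or $\|d_k\|_{L^1(0,T)}$ shrinks faster than $\rho_k$, so that the forced acceptance is genuinely reached before termination. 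I would close this using the homotopy: the next run starts from $w_{\varepsilon_k}$ at the \emph{fixed} radius $\Delta^0$, so by the same argument it must decrease $j_{\varepsilon_{k+1}}+\alpha\TV$ by at least order $\alpha$ whenever $w_{\varepsilon_k}$ still carries a removable jump; since $j$ is bounded below and the mollification drift $j(K_{\varepsilon_{k+1}}w_{\varepsilon_k})-j(K_{\varepsilon_k}w_{\varepsilon_k})$ tends to $0$ along the subsequence (both arguments converge to $w$ in $L^2(0,T)$ as in the proof of \Cref{thm:gamma_convergence}), this can occur only finitely often, and the quantitative tolerance $(1-\tfrac{3a}{4})\alpha$ with $0<a\le 1-\sigma$ is exactly what leaves the slack between the genuine model decrease $\approx\alpha$ from deleting a jump and the stopping threshold that makes this work. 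Having excluded $\limsup_k\TV(w_{\varepsilon_k})>\TV(w)$, we get $\TV(w_{\varepsilon_k})\to\TV(w)$, which together with $w_{\varepsilon_k}\to w$ in $L^1(0,T)$ is strict convergence in $\BVW(0,T)$.
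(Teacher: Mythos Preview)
Your setup and the ``cheap correction'' $d_k=w-w_{\varepsilon_k}$ are fine and match the paper's approach. The genuine gap is in how you bound the curvature term when testing acceptance. You invoke \Cref{ass:strong_assumption} for $j_{\varepsilon_k}$ via \Cref{prp:relax_assumption}, which gives $|\nabla^2 j_{\varepsilon_k}(\xi)(d,d)|\le C_{\varepsilon_k}\|d\|_{L^1}^2$ with a constant $C_{\varepsilon_k}$ that blows up as $\varepsilon_k\to 0$ (this is precisely the content of the remark after \Cref{prp:relax_assumption_H1}). Consequently your acceptance window $[\|d_k\|_{L^1},\rho_k]$ with $\rho_k\sim C_{\varepsilon_k}^{-1/2}$ can be empty, and your attempt to close the gap ``via the homotopy'' is circular: the next run faces the same blowing-up constant, you have not shown that a step is actually \emph{accepted} there (only that the predicted reduction is large), and the telescoping argument would additionally need summability of the drift and care with the subsequence versus the full homotopy index set.

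The paper avoids this entirely by never using $C_{\varepsilon_k}$. It applies Taylor's theorem to $j$ (not $j_\varepsilon$), so the second-order term is $\nabla^2 j(\xi)(K_{\varepsilon_k}(w_{\varepsilon_k}-w_0),K_{\varepsilon_k}(w_{\varepsilon_k}-w_0))$, which by \Cref{ass:weak_assumption} and Young's inequality is bounded by $C\|w_{\varepsilon_k}-w_0\|_{L^2}^2$ with the \emph{uniform} constant $C$. Since $w_{\varepsilon_k}$ and $w_0$ are $W$-valued, $\|w_{\varepsilon_k}-w_0\|_{L^2}^2\le(\max W-\min W)\|w_{\varepsilon_k}-w_0\|_{L^1}$, so the curvature term is bounded \emph{linearly} in the trust-region radius by a constant independent of $\varepsilon$. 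One can therefore fix a single $\tilde\Delta>0$ (depending only on $C$, $a$, $\bar C$) such that at the first dyadic radius $\Delta^0 2^{-\tilde k}\le\tilde\Delta$ the optimal step is both acceptable and has predicted reduction at least $(1-\tfrac{3a}{4})\bar C\ge(1-\tfrac{3a}{4})\alpha$; since $\Delta(\varepsilon_i)\to 0$, eventually $\Delta(\varepsilon_i)<\Delta^0 2^{-\tilde k}$, so neither termination criterion can fire and $w_{\varepsilon_i}$ cannot be a final iterate. Replacing your $C_{\varepsilon_k}$-based estimate with this uniform one is exactly the missing idea.
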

\begin{proof}
First, we note that the sequence $(w_\varepsilon)_\varepsilon$
is well defined because \Cref{prp:finite_termination_slip_w_extra_termination}
asserts that \Cref{alg:slip} terminates after finitely many iterations
for the two assumed termination criteria.
Second, the facts that $\liminf_{\varepsilon \to 0}\TV(w_\varepsilon) < \infty$
and that $(w_\varepsilon)_\varepsilon$ is bounded in $L^\infty(0,T)$
and hence $L^1(0,T)$ imply that there is at least one weakly-$^*$ convergent subsequence.

In the remainder of the proof we consider an arbitrary weak-$^*$ convergent 
subsequence $w_i \weakstarto \bar{w}$ for $\varepsilon_i \to 0$
in $\BV(0,T)$.
The convergence theory of \Cref{alg:slip} in \cite{leyffer2022sequential}
gives $(w_\varepsilon)_\varepsilon \subset \BVW(0,T)$ and
the weak-$^*$ closedness of $\BVW(0,T)$
(see \Cref{sec:primer}) gives $\bar{w} \in \BVW(0,T)$.
We note that
the subsequence also converges in $L^2(0,T)$ by
boundedness in $L^\infty(0,T)$ (due to $w_i(t) \in W$ a.e.\ for all $\varepsilon_i$) and pointwise convergence a.e.\ of a subsequence.

We set forth to prove the claim by way of contradiction. To this end,
assume that the convergence of the subsequence is not strict.
Then we can find a subsequence, for ease of notation also denoted by
$w_i$, and $\bar C>0$ such that
\begin{align}
~\alpha \TV(w_i)-\alpha \TV(\bar w) \geq \bar C\geq \alpha >0 
\label{eq:barC}
\end{align}
for all $i\in \mathbb{N}$ because the $\TV$-seminorm is weak-$^*$ lower 
semi-continuous. Because $\TV(w_i), \TV(\bar w) \in \mathbb{Z}$,
we can assume that $\bar C \geq \alpha$ holds. Note
\begin{align*}
    |j(K_{\varepsilon_i}w_i)-j(\bar w)| \leq |j(K_{\varepsilon_i} w_i) - j(K_{\varepsilon_i} \bar w)|
    +|j(K_{\varepsilon_i} \bar w)-j(\bar w)|. 
\end{align*}

Additionally, $\|K_{\varepsilon_i} \bar w - \bar w\|_{L^2} \to 0$  
holds by the argumentation provided in Theorem 7, page 714, in 
\cite{evans2010partial}.  Young's convolution inequality shows
\begin{align*}
    \|K_{\varepsilon_i} w_{i} - K_{\varepsilon_i} \bar w\|_{L^2}
    \le \|\eta_{\varepsilon_i}\|_{L^1}
        \|w_{i} - \bar w\|_{L^2} = \|w_i - \bar w\|_{L^2} \to 0.
\end{align*}
Thus, we obtain that
$j(K_{\varepsilon_i} w_i) \to j(\bar w)$ for $i \to \infty.$

 Let $\tilde{\Delta} > 0$ be arbitrary but fixed.
Then there is $i_0 \in \N$ large enough such that for all $i \ge i_0$
\begin{align*}
    \|w_{i} - \bar w\|_{L^1}
    \leq  \tilde \Delta\quad \text{and}\quad
    \|\nabla j(w_{i}) -  \nabla j(\bar w)\|_{L^2} \leq \tilde \Delta
\end{align*}
hold. Thus $\bar w - w_i$ is feasible for $\TR(w_{i},\tilde \Delta)$
for all $i \geq i_0$. Let $w_0 \in \BVW(0,T)$ be an arbitrary point 
	such that $w_0 - w_i$ is feasible for $\TR(w_{i},\tilde \Delta)$.
Using Taylor's theorem we obtain that
\begin{align*}
    j(K_{\varepsilon_i} w_{i})-j(K_{\varepsilon_i}w_0)
    &= \sigma (\nabla j(K_{\varepsilon_i} w_{i}), K_{\varepsilon_i} (w_{i}-w_0))_{L^2}
    + (1- \sigma) (\nabla j(K_{\varepsilon_i} w_{i}), K_{\varepsilon_i} (w_{i}-w_0))_{L^2}  \\
     \MoveEqLeft[-1] +\frac{1}{2} \nabla^2 j(\xi_i) (K_{\varepsilon_i} (w_{i}-w_0),K_{\varepsilon_i} (w_{i}-w_0))
\end{align*}
for some $\xi_i \in L^2(0,T)$. From \Cref{ass:weak_assumption} we derive that
\begin{align*}
    |\nabla^2j(\xi_i) (K_{\varepsilon_i}(w_{i}-w_0),K_{\varepsilon_i}(w_{i}-w_0))| \leq C \|w_{i}-w_0\|_{L^2}^2 \leq C |\max(W)-\min(W)| \|w_{i}-w_0\|_{L^1} \leq C_2 \tilde \Delta
\end{align*}
for some $C$, $C_2>0$.
Furthermore, we obtain that 
\begin{align*}
    |(\nabla j(K_{\varepsilon_i} w_{i}), K_{\varepsilon_i} (w_{i}-w_0))_{L^2} | &\leq \| \nabla j(K_{\varepsilon_i} w_{i}) \|_{L^2} \|K_{\varepsilon_i} (w_{i}-w_0)\|_{L^2} \\
    &\leq (\|\nabla j(\bar w)\|_{L^2}+C_3\tilde \Delta) \|w_{i}-w_0\|_{L^2} \\
    & \leq C_4 (C_5 + C_3 \tilde \Delta) \sqrt{\tilde \Delta}
\end{align*}
holds for some $C_3$, $C_4$, $C_5 > 0$.
Then there exists $\tilde{\Delta}$ as above such that
for $i_0$ corresponding to $\tilde{\Delta}$ we obtain for all $i \ge i_0$
that the estimates
\begin{align}
    \Delta(\varepsilon_i) &<\frac{\tilde \Delta}{2}, \\
    |\nabla^2j(\xi_i) (K_{\varepsilon_i}(w_{i}-w_0),K_{\varepsilon_i}(w_{i}-w_0))| 
    &\leq \frac{a}{4} \bar C \label{Eq: bound second deriv 2} \text{ and }\\
     |(\nabla j(K_{\varepsilon_i} w_{i}), K_{\varepsilon_i} (w_{i}-w_0))_{L^2} | 
     &\le \frac{a}{4} \bar C \label{Eq: bound first deriv 2}
\end{align}
hold for all $w_0$ in the trust region of $\TR(w_{i},\Delta)$ if $\Delta \le \tilde{\Delta}$.
Moreover, there exists a minimal $\tilde{k} \in \N$ such that
$\tilde{\Delta} \ge \Delta^02^{-\tilde{k}}$.

We now show that the inner loop of \Cref{alg:slip} accepts the step not later than
in inner iteration $\tilde{k}$ for $i$ large enough. Specifically,
for all $i \ge i_0$, the execution of \Cref{alg:slip} for $\varepsilon_i$ 
accepts a step not later than in inner iteration $\tilde{k}$ in the final outer iteration.
The trust-region radius upon acceptance is  $\Delta^0 2^{-\tilde{k}}$ with
$\Delta(\varepsilon_i)<\Delta^0 2^{-\tilde{k}} \le \tilde{\Delta} \le \Delta^0 2^{-\tilde{k}+1} $.

Let $w^* - w_i$ be the optimal solution of $\TR(w_{i},\Delta^0 2^{-\tilde{k}})$.
Because $\bar w - w_i$ is in the trust region of
$\TR(w_{i}, \Delta^0 2^{-\tilde{k}})$ it follows that
\begin{align}
    \alpha (\TV(w_{i}) - \TV(w^*)) \geq  \alpha (\TV(w_{i}) - \TV(\bar w)) - \frac{a}{2} \bar C \geq \left(1-\frac{a}{2}\right) \bar C \label{Eq: lower bound TV 2}
\end{align}
because the predicted reduction of 
$w^*$ is greater than or equal to that of $\bar w$.
Let $M \coloneqq \alpha (\TV(w_{i}) - \TV(w^*)) - (1-\frac{a}{2}) \bar C \geq 0$.
In total we obtain the inequalities
\begin{align*}
     \hspace{2em}
     &\hspace{-2em}
     j(K_{\varepsilon_i} w_{i})-j(K_{\varepsilon_i}  w^*) + \alpha (\TV(w_{i}) - \TV(w^*))\\
     &= \sigma (\nabla j(K_{\varepsilon_i} w_{i}), K_{\varepsilon_i} (w_{i}-w^*))_{L^2}
     + (1- \sigma) (\nabla j(K_{\varepsilon_i} w_{i}), K_{\varepsilon_i}(w_{i}-w^*))_{L^2} \\ 
     &\hphantom{=}\ 
     + \frac{1}{2} \nabla^2 j(\xi) (K_{\varepsilon_i} (w_{i}-w^*),K_{\varepsilon_i} (w_{i}-w^*))  + \alpha (\TV(w_{i}) - \TV(w^*))
     \\
     &\geq  -\sigma \ell(w_{i},w^*-w_{i}) - \frac{a}{4} \bar C - \frac{a}{4} \bar C + (1-\sigma) 
     \left(1-\frac{a}{2}\right) \bar C + (1-\sigma) M
          &&\text{\scriptsize \eqref{Eq: bound second deriv 2},\eqref{Eq: bound first deriv 2}}\\
     &\geq  -\sigma \ell(w_{i},w^*-w_{i}) - \frac{a}{4} \bar C - \frac{a}{4} \bar C + \frac{a}{2} \bar C + (1-\sigma) M &&\text{\scriptsize $a \le 1 - \sigma$} \\
     &\geq -\sigma  \ell(w_{i},w^* - w_{i}) \\
     & \geq  \sigma \left(1-\frac{3a}{4}\right) \bar C
     \underset{a < 1}> 0.
     &&\text{\scriptsize \eqref{Eq: bound first deriv 2},\eqref{Eq: lower bound TV 2}}
\end{align*}
We analyze these inequalities with respect to the two possible termination criteria that are assumed (note that the original termination criterion
in \Cref{alg:slip} ln.\ \ref{ln:terminate} is replaced by
the second one, see also \Cref{rem:termination_criteria}).

The first termination criterion does not apply, because the trust-region 
algorithm would find an improvement of $\sigma (1-\frac{3a}{4})\bar C$
before the critical trust-region radius is attained.
The second termination criterion does not apply,
because the predicted reduction would have to be less than 
$(1-\frac{3a}{4})\alpha \le (1-\frac{3a}{4})\bar C$ due to inequality \ref{eq:barC},
which can not happen as the inequalities show.

Thus neither of the assumed termination criteria is satisfied for any
$i \ge i_0$ but the $w_i$ are final iterates of \Cref{alg:slip}
under the assumed termination criteria, which is
a contradiction.
Consequently, the convergence of $w_i$ to $\bar{w}$ is strict.
\end{proof}

\begin{remark}
We note that the proof does not require the assumption
that an execution of \Cref{alg:slip} is initialized
with the final iterate of the execution for the previous
choice of $\varepsilon$. We have included it here
because it is the natural choice for a homotopy that
drives $\varepsilon$ to zero and generally helps to obtain
a bounded (and improving) sequence in practice as can also
be seen in our numerical examples.

We note that a there are ample ways to explore more sophisticated
and more efficient homotopy methods with adaptive choices of $\varepsilon^n$ so that
the homotopy becomes integrated into Algorithm 1.
	
We believe that if the trust-region subproblems \eqref{eq:tr} are solved
inexactly, the convergence analysis in \Cref{prp:final_homotopy} as well as in
\cite{leyffer2022sequential} can still be carried out if one can guarantee that
the inexact solution to \eqref{eq:tr} has an objective value that is smaller than
optimal objective value multiplied by a fixed constant $c \in (0,1)$. Clearly,
$c$ will appear in the constants in the arguments and statements in this case.
Moreover, we believe this can be further combined with inexact evaluations of
$\nabla j_\varepsilon$ when this inexactness is driven to zero over the course
of the iterations.
\end{remark}

\section{Application to Fluid Flows through Deformable,  Porous Media}\label{sec:biots_model}

\textbf{PDE Model.} Let $\Omega \subset \mathbb{R}^3$ be the open, bounded domain occupied by the fluid-solid mixture, with Lipschitz boundary $\partial \Omega$. Motivated by applications in biomechanics (like tissue perfusion \cite{Araujo,frijns,Klisch,Lemon2006,Tosin, causin}), we work under the assumptions of full saturation, negligible inertia, small deformations and incompressible mixture components (in the sense that the  solid and fluid phases can’t undergo volume changes at the microscale).  Due to the complex composition of biological tissue, which exhibit both elastic and viscoelastic behaviors, we consider both poroelastic and poroviscoelastic systems, where the effective stress tensor is of Kelvin-Voigt type. The extent to which structural viscoelasticity is present in the equations is represented by the parameter $\delta \geq 0$. Therefore, the total stress of the fluid-solid mixture is given by 
$$\mathbf T(\mb{u},p) =  
%\delta (2\mu_v \epsilon(\mb{u}_t) + \lambda_v (\nabla \cdot \mb{u}_t)\mb{I})+2\mu_e \epsilon(\mathbf{u}) + \lambda_e (\nabla \cdot \mathbf{u})\, \mathbf{I} - p \mathbf{I}
\delta \mu_v (\nabla \mb{u}_t + \nabla \mb{u}_t^T) + \delta \lambda_v (\nabla \cdot \mb{u}_t)\mb{I}+\mu_e (\nabla \mathbf u + \nabla \mathbf u^T) + \lambda_e (\nabla \cdot \mathbf{u})\, \mathbf{I} - p \mathbf{I},$$ 
where $\mb{u}$ is the elastic displacement  and  $p$ is the fluid pressure. Moreover,  $\mathbf{I}$ stands for  the identity tensor, $\lambda_e$ and $\mu_e$ are the Lam\'e parameters, and $\lambda_v$, $\mu_v$ are the visco-elastic parameters, which are all strictly positive. 

The quasi-static system  is described by two conservation laws: the balance of linear momentum for the fluid-solid mixture and the balance of mass  for the fluid component.
\begin{equation}\label{Eq:balance}
\nabla \cdot \mathbf T(\mb{u},p) + \mathbf F(\mathbf x, t) = \mathbf 0 \quad \mbox{and}\quad  \zeta_{t} + \nabla\cdot\mathbf{v}=S(\mathbf x, t)  \quad \mbox{in}\; \Omega\times (0,T),
\end{equation}
where the fluid content $\zeta$ is given by $\ds  \zeta =  \nabla \cdot \mathbf{u}$, and the discharge velocity $\mathbf{v}$ is given by $\mathbf{v} = - \mathbf{K}\nabla  p,$ with
$\mathbf{K} = k \mathbf{I}$, 
where $\mathbf{K}$ is the permeability tensor and $k$ is a constant.We note here that the formula for the fluid content is a simplification of the more general expression  $ \zeta =  c_0 p + \alpha \nabla \cdot \mathbf{u}$ \cite{biot}, where  $c_0$ is the constrained specific storage coefficient and $\alpha$ is the Biot-Willis coefficient. The simplification is made due to the assumption of incompressible fluid and solid components of the mixture (biological tissues have a mass density close to that of the water), which mathematically translates to $c_0=0$ and $\alpha=1$ \cite{detournay-cheng}, and therefore the fluid content becomes solid dilation.\\

We assume that $\partial \Omega=\Gamma_D \cup \Gamma_N$, where $\Gamma_D$ and $\Gamma_N$ are the Dirichlet and Neumann parts of the boundary (with respect to the elastic displacement), with  $\Gamma_D \cap \Gamma_N = \emptyset$ (while allowing $\overline \Gamma_D \cap \overline\Gamma_N \neq \emptyset$).

We associate the following boundary and initial conditions to the balance laws mentioned above:
\begin{equation}\label{BC1}
\mathbf T(\mb{u},p)\mathbf n=\mathbf g, \quad \mathbf v\cdot \mathbf n =0 \quad \mbox{on}\; \Gamma_N\,,
\end{equation}
\begin{equation}\label{BC2}
\mathbf u = \mathbf 0, \quad p =0 \quad \mbox{on}\; \Gamma_{D,p}\,,
\end{equation}
\begin{equation}\label{BC3}
\mathbf u = \mathbf 0, \quad \mathbf v\cdot \mathbf n =\psi \quad \mbox{on}\; \Gamma_{D,\mb{v}}\,.
\end{equation}
\begin{equation}\label{IC} 
\ds  \mathbf u (x,0) =\mb{u}_0(x)  \ \text{in}\ \Omega.
\end{equation}
Note that the Dirichlet part of the boundary $\Gamma_D = \Gamma_{D,p}\cup\Gamma_{D,\mb{v}}$, where the subscripts $p$ and $\mathbf{v}$ indicate conditions imposed on the Darcy pressure and discharge velocity, respectively. As usual, $\mathbf n$ is the outward unit normal vector.

The data  in the system is represented by the body force per unit of volume $\mathbf{F}$,
the net volumetric fluid production rate $S$, and the boundary sources 
$\mathbf g$ and $\psi$. They can be used as controls. \\

  We impose the following assumptions on the domain:
\begin{assumption}\label{assumpdomain} We assume: 
\begin{enumerate}
\item  $\Gamma_{D}$ is a set of positive measure, so by Korn's inequality:
$$ E(\mb{u}(t)) = \|\nabla \cdot \mb{u}\|^2_{L^2(\Omega)}+ (\nabla \mb{u} : (\nabla \mb{u}+\nabla \mb{u}^T)) \ge c||\mb{u}(t)||^2_{\mb{H}^1(\Omega)}$$
where $\grad \mb{u}$ stands for the Jacobian matrix of $\mathbf u$ and the Frobenius inner product of two matrices is given by
$$ (\mb{A}: \mb{B}) = \sum_{i=1}^3 \sum_{j=1}^3 \int_{\Omega} (A_{ij}B_{ij})d\Omega.$$ 
\item $\Gamma_{D,p}$ is a set of positive measure, so by Poincare's inequality:
$$||v||_{L^2(\Omega)} \le C_P ||\nabla v||_{\mb{L}^2(\Omega)},~~\forall v \in V.$$
\end{enumerate} \end{assumption}

\textbf{Notation.} Let $$\mathbb V \equiv \mathbf V \times V = (H^1_{\Gamma_D}(\Omega))^3 \times  H^1_{\Gamma_{D,p}}(\Omega), $$
where the inner-product on $\mb{V}$ is given by
\begin{equation}\label{bil}a(\mb{u},\mb{w}) = (\nabla \cdot \mb{u}, \nabla \cdot \mb{w}) + (\grad\mb{u}:\grad \mb{w}) +   (\grad \mb{u}: (\grad \mb{w})^T).
\end{equation}
%
%Here  $\grad \mb{u}$ stands for the Jacobian matrix of $\mathbf u$ and the Frobenius inner product of two matrices is given by
%$$ (\mb{A}: \mb{B}) =  \int_{\Omega} (A_{ij}B_{ij})d\Omega.$$ 
We note that the bilinear form $a(\cdot, \cdot)$ defines an inner product on $\mb{V}$, due to \Cref{assumpdomain} on the domain. The inner product for $V$ is inherited from $H^1(\Omega).$\\

We have the following results on well-posedness of weak solutions \cite{BGSW,BS_AWM,BS-AA}: 
\begin{proposition}[Poroelasticity] \label{pe}
Let $\delta=0$, $\mb{F}\in H^1(0,T;\mb{L}^2(\Omega))$, $S\in L^2(0,T;L^2(\Omega))$, and $\mb{g}\in H^1(0,T; \mb{H}^{1/2}(\Gamma_N))$. Additionally, let $\psi(x,t)=w(t)\chi(x)$ where $w(t) \in L^2(0,T)$ and $\chi \in L^2(\Gamma_{D,v})$. Then there exists a unique weak solution
$(\mb{u},p) \in L^2(0,T;\mb{V}) \times L^2(0,T;V)$
to (\ref{Eq:balance}-\ref{IC}). Additionally, the solution satisfies the following energy estimate: 
\begin{equation}\label{EE}\int_0^T\|p\|^2_V+ \|\mb{u}\|^2_\mb{V}dt \leq C(\|\mb{u}_0\|_\mb{V}+\|\mb{F}\|_{H^1(0,T;\mb{L}^2(\Omega))}^2+\|S\|_{L^2(0,T;L^2(\Omega))}^2+\|\mb{g}\|_{H^1(0,T;\mb{H}^{1/2}(\Gamma_N))}^2+\|\psi\|_{L^2(0,T;L^2(\Gamma_{D,v}))}^2)  \end{equation}
\end{proposition}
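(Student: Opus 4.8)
The plan is to prove existence by a Galerkin (or, equivalently, backward-Euler time-discretization) scheme combined with an energy estimate, and uniqueness by linearity. First I would fix the weak formulation. Multiplying the momentum balance in \eqref{Eq:balance} (with $\delta=0$) by $\mb{w}\in\mb{V}$, integrating by parts and inserting \eqref{BC1}--\eqref{BC2} gives
\[
  a_e(\mb{u},\mb{w}) - (p,\grad\cdot\mb{w}) = (\mb{F},\mb{w}) + \langle\mb{g},\mb{w}\rangle_{\Gamma_N},
\]
where $a_e(\mb{u},\mb{w}) \coloneqq \mu_e(\grad\mb{u}+\grad\mb{u}^T,\grad\mb{w}) + \lambda_e(\grad\cdot\mb{u},\grad\cdot\mb{w})$ is symmetric, bounded, and---by \Cref{assumpdomain} (Korn) and positivity of $\mu_e,\lambda_e$---coercive on $\mb{V}$, hence equivalent to the inner product $a(\cdot,\cdot)$ of \eqref{bil}; likewise, testing the mass balance with $q\in V$ and using $\zeta=\grad\cdot\mb{u}$, $\mb{v}=-k\grad p$, \eqref{BC1},\eqref{BC3} yields
\[
  \langle(\grad\cdot\mb{u})_t,q\rangle + k(\grad p,\grad q) = (S,q) - \langle\psi,q\rangle_{\Gamma_{D,\mb{v}}}.
\]
The structural point is that the momentum equation carries no time derivative: for each $t$, given $p(t)$ and the data, it determines $\mb{u}(t)\in\mb{V}$ uniquely by Lax--Milgram. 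So the system is an elliptic constraint coupled to a \emph{degenerate} parabolic equation for $p$ (the storage term $c_0 p$ is absent, $c_0=0$), whose only coercive contribution is $k(\grad p,\grad q)$, coercive on $V$ by \Cref{assumpdomain} (Poincar\'e).

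Next I would construct approximate solutions. Picking a basis $(\phi_i)_{i\in\N}$ of $V$, I seek $p_m(t)=\sum_{i=1}^m\alpha_i(t)\phi_i$ together with $\mb{u}_m(t)\in\mb{V}$ solving the momentum equation exactly for $p_m(t)$, and project the mass equation onto $\mathrm{span}(\phi_1,\dots,\phi_m)$. This produces a linear ODE system $C\dot\alpha = f(t)-K\alpha$ whose mass matrix $C_{ij}=(\grad\cdot\mb{u}[\phi_i],\phi_j)$ coincides with the elastic-energy pairing of the solution operator $p\mapsto\mb{u}[p]$ of the momentum equation and is symmetric positive definite on the finite-dimensional span (since $\mb{u}[q]=\mb{0}$ together with $q\in V$ forces $q=0$), so the ODE has a unique global solution. (Alternatively the degeneracy can be bypassed by an artificial-compressibility regularization $\varepsilon(p_t,q)$, or by a backward-Euler scheme in which the cross-terms $-(p^n,\grad\cdot\mb{w})$ and $(\grad\cdot\mb{u}^n,q)$ cancel so that each time step is a coercive coupled elliptic problem.) The heart of the argument is the a priori estimate: testing the momentum equation with $\mb{w}=(\mb{u}_m)_t$ and the mass equation with $q=p_m$ and adding, the cross terms cancel and $a_e(\mb{u}_m,(\mb{u}_m)_t)=\tfrac12\tfrac{d}{dt}a_e(\mb{u}_m,\mb{u}_m)$, leaving
\[
  \tfrac12\tfrac{d}{dt}a_e(\mb{u}_m,\mb{u}_m) + k\|\grad p_m\|^2 = (\mb{F},(\mb{u}_m)_t) + \langle\mb{g},(\mb{u}_m)_t\rangle + (S,p_m) - \langle\psi,p_m\rangle.
\]
Integrating over $(0,t)$ and integrating by parts in time in the first two terms---which is exactly where $\mb{F}\in H^1(0,T;\mb{L}^2(\Omega))$ and $\mb{g}\in H^1(0,T;\mb{H}^{1/2}(\Gamma_N))$ enter---removes $(\mb{u}_m)_t$; then Korn, Poincar\'e, the trace theorem (for $\langle\psi,p_m\rangle$ and the boundary terms), Young's inequality and Gr\"onwall's lemma yield $\|\mb{u}_m\|_{L^\infty(0,T;\mb{V})}^2 + \|p_m\|_{L^2(0,T;V)}^2 \le C(\cdots)$ with the right-hand side of \eqref{EE}, uniformly in $m$.

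Finally I would pass to the limit $m\to\infty$: weak-$*$ compactness in $L^\infty(0,T;\mb{V})$, weak compactness in $L^2(0,T;V)$, and the bound on $(\grad\cdot\mb{u}_m)_t$ in $L^2(0,T;V^*)$ coming from the mass equation identify a weak solution $(\mb{u},p)\in L^2(0,T;\mb{V})\times L^2(0,T;V)$ satisfying \eqref{EE}, and the initial condition \eqref{IC} is recovered since $\grad\cdot\mb{u}\in H^1(0,T;V^*)$. Uniqueness is then immediate from linearity: the difference of two solutions solves the homogeneous system with $\mb{u}_0=\mb{0}$, so the energy identity above gives $a_e(\mb{u},\mb{u})\equiv 0$ and $\grad p\equiv 0$, whence $\mb{u}\equiv\mb{0}$ and $p\equiv 0$ by Korn and Poincar\'e.

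I expect the main obstacle to be the degeneracy caused by incompressibility ($c_0=0$): the energy contains no $\|p(t)\|$ term to absorb lower-order contributions and no $p_t$ to supply strong compactness, so one must (i) handle the elliptic--parabolic coupling carefully---eliminating $\mb{u}$ in favor of $p$, or exploiting the cross-term cancellation in a time-discrete scheme---and (ii) pay for testing with $\mb{u}_t$ by requiring the $H^1$-in-time regularity of $\mb{F}$ and $\mb{g}$ together with a Gr\"onwall argument. A secondary technical point is the compatibility of $\mb{u}_0$ with the momentum balance at $t=0$, which is what makes the boundary term $a_e(\mb{u}(0),\mb{u}(0))$ in the energy controllable by the data.
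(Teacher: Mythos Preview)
The paper does not actually prove this proposition: it is stated as a quotation of known well-posedness results, with the sentence ``We have the following results on well-posedness of weak solutions \cite{BGSW,BS_AWM,BS-AA}'' immediately preceding it and no proof given. So there is nothing to compare against in the paper itself.

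That said, your sketch is the standard energy-method proof for the incompressible ($c_0=0$) quasi-static Biot system and is essentially what is carried out in the cited references. The key structural observations---testing the momentum equation with $\mb{u}_t$ and the mass equation with $p$ so that the coupling terms $(p,\grad\cdot\mb{u}_t)$ cancel, and then integrating the $(\mb{F},\mb{u}_t)$ and $\langle\mb{g},\mb{u}_t\rangle$ terms by parts in time to trade $\mb{u}_t$ for $H^1$-in-time data regularity---are exactly right and explain why the hypotheses single out $H^1(0,T;\cdot)$ for $\mb{F}$ and $\mb{g}$ but only $L^2(0,T;\cdot)$ for $S$ and $\psi$.

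One point to tighten: your claim that the Galerkin mass matrix $C_{ij}=(\grad\cdot\mb{u}[\phi_i],\phi_j)=a_e(\mb{u}[\phi_i],\mb{u}[\phi_j])$ is positive \emph{definite} relies on the injectivity of $q\mapsto\mb{u}[q]$, i.e.\ on $\{\grad\cdot\mb{w}:\mb{w}\in\mb{V}\}$ being dense in $L^2(\Omega)$. This is true here because $\Gamma_N\neq\emptyset$ (so the divergence operator from $\mb{V}$ is onto $L^2(\Omega)$), but you should say so; if $\Gamma_D=\partial\Omega$ the range would only be $L^2_0(\Omega)$ and a constant pressure mode would lie in the kernel. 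The alternative routes you mention---artificial compressibility $\varepsilon(p_t,q)$ with $\varepsilon\to0$, or the implicit-Euler scheme---sidestep this nondegeneracy check and are in fact what several of the cited references use.
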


\begin{proposition}[Poroviscoelasticity] \label{pve}
Let $\delta>0$, $\mb{F}\in L^2(0,T;\mb{L}^2(\Omega))$, $S\in L^2(0,T;L^2(\Omega))$, and\\ $\mb{g}\in L^2(0,T; \mb{H}^{1/2}(\Gamma_N))$. Additionally, let $\psi(x,t)=w(t)\chi(x)$ where $w(t) \in L^2(0,T)$ and $\chi \in L^2(\Gamma_{D,v})$. Then there exists a unique weak solution
$(\mb{u},p) \in H^1(0,T;\mb{V}) \times L^2(0,T;V)$
to (\ref{Eq:balance}-\ref{IC}). Additionally, the solution satisfies the following energy estimate: 
\begin{equation}\label{EEPVE}\begin{split}&\int_0^T\|p\|^2_V+ \|\mb{u}\|^2_\mb{V} +\delta \|\mb{u}_t\|^2_\mb{V} dt\\
&\leq C(\|\mb{u}_0\|_\mb{V}+\|\mb{F}\|_{L^2(0,T;\mb{L}^2(\Omega))}^2+\|S\|_{L^2(0,T;L^2(\Omega))}^2+\|\mb{g}\|_{L^2(0,T;\mb{H}^{1/2}(\Gamma_N))}^2+\|\psi\|_{L^2(0,T;L^2(\Gamma_{D,v}))}^2)  \end{split}\end{equation}
\end{proposition}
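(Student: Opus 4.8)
The plan is to establish existence through a Galerkin approximation, to read the energy estimate \eqref{EEPVE} off the uniform a priori bound that the scheme yields, and to deduce uniqueness from linearity. First I would fix the weak formulation: testing the momentum balance in \eqref{Eq:balance} against $\mb{w}\in\mb{V}$ and the mass balance against $q\in V$, integrating by parts in space and inserting the boundary data \eqref{BC1}--\eqref{BC3}, a weak solution is a pair $(\mb{u},p)\in H^1(0,T;\mb{V})\times L^2(0,T;V)$ with $\mb{u}(0)=\mb{u}_0$ (cf.\ \eqref{IC}) such that, for a.a.\ $t\in(0,T)$,
\[ \delta\,a_v(\mb{u}_t,\mb{w})+b_e(\mb{u},\mb{w})-(p,\nabla\cdot\mb{w})=(\mb{F},\mb{w})+\langle\mb{g},\mb{w}\rangle_{\Gamma_N}\qquad\text{for all }\mb{w}\in\mb{V}, \]
\[ (\nabla\cdot\mb{u}_t,q)+k(\nabla p,\nabla q)=(S,q)-\langle\psi,q\rangle_{\Gamma_{D,v}}\qquad\text{for all }q\in V, \]
where $a_v$ and $b_e$ are the symmetric viscoelastic and elastic bilinear forms obtained from $\mb{T}(\mb{u},p)$; each is continuous and, by Korn's inequality in \Cref{assumpdomain}, coercive on $\mb{V}$, hence equivalent to the inner product $a(\cdot,\cdot)$ of \eqref{bil}. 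The structural fact used repeatedly below is that the coupling terms $-(p,\nabla\cdot\mb{w})$ and $(\nabla\cdot\mb{u}_t,q)$ are adjoint up to sign. I would then pick smooth bases of $\mb{V}$ and of $V$, form the finite-dimensional spaces $\mb{V}_n$, $V_n$, and seek $\mb{u}_n(t)\in\mb{V}_n$, $p_n(t)\in V_n$ solving the two identities against all test functions in $\mb{V}_n$, $V_n$, with $\mb{u}_n(0)$ the projection of $\mb{u}_0$ onto $\mb{V}_n$.

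The first substantive point is solvability of the Galerkin system. In coefficient form the momentum block reads $\delta\mathcal{A}\dot c+\mathcal{B}c-\mathcal{D}^{\top}d=f_1(t)$ and the mass block reads $\mathcal{D}\dot c+k\mathcal{C}d=f_2(t)$; because the incompressibility assumption ($c_0=0$, see the discussion after \eqref{Eq:balance}) removes any time derivative of the pressure, this is a differential--algebraic, not an ordinary, system. I would eliminate $d$: solving the momentum block for $\dot c$ and substituting into the mass block gives $\big(\tfrac1\delta\mathcal{D}\mathcal{A}^{-1}\mathcal{D}^{\top}+k\mathcal{C}\big)d=f_2-\tfrac1\delta\mathcal{D}\mathcal{A}^{-1}(f_1-\mathcal{B}c)$, and the matrix on the left is positive definite because $\mathcal{A}$ is invertible (coercivity of $a$, i.e.\ Korn) and $\mathcal{C}$ is positive definite (Poincar\'e on $\Gamma_{D,p}$, both from \Cref{assumpdomain}). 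Hence $d$ depends affinely on $c$ with $L^2$-in-time coefficients, and substituting back turns the momentum block into a linear ODE $\dot c=G(t)c+H(t)$ with $L^2$ data; Carath\'eodory's theorem then produces $\mb{u}_n$ on all of $[0,T]$ and, with it, $p_n$.

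Next I would derive the a priori estimates, which simultaneously give \eqref{EEPVE}. Testing the $n$th momentum identity with $\mb{u}_{n,t}$ — admissible since $\mb{u}_{n,t}(t)\in\mb{V}_n$ — and the $n$th mass identity with $p_n$ and adding, the two coupling terms cancel exactly. The left-hand side then contains $\delta\,a_v(\mb{u}_{n,t},\mb{u}_{n,t})\ge\delta c_K\|\mb{u}_{n,t}\|_\mb{V}^2$, the term $b_e(\mb{u}_n,\mb{u}_{n,t})=\tfrac12\tfrac{d}{dt}b_e(\mb{u}_n,\mb{u}_n)$ with $b_e(\mb{u}_n,\mb{u}_n)$ equivalent to $\|\mb{u}_n\|_\mb{V}^2$, and $k\|\nabla p_n\|_{L^2(\Omega)}^2$. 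On the right-hand side, $(\mb{F},\mb{u}_{n,t})$ and $\langle\mb{g},\mb{u}_{n,t}\rangle_{\Gamma_N}$ are estimated by Young's inequality and the trace theorem and absorbed into $\delta\|\mb{u}_{n,t}\|_\mb{V}^2$ — this is exactly where $\delta>0$ is needed, and why, unlike in \Cref{pe}, the data $\mb{F}\in L^2(0,T;\mb{L}^2(\Omega))$ and $\mb{g}\in L^2(0,T;\mb{H}^{1/2}(\Gamma_N))$ suffice — while $(S,p_n)$ and $\langle\psi,p_n\rangle_{\Gamma_{D,v}}$ are absorbed into $k\|\nabla p_n\|_{L^2(\Omega)}^2$ using the Poincar\'e inequality of \Cref{assumpdomain} and the trace theorem, where one uses $\psi(\cdot,t)=w(t)\chi$ so that $\|\psi\|_{L^2(0,T;L^2(\Gamma_{D,v}))}^2=\|w\|_{L^2(0,T)}^2\|\chi\|_{L^2(\Gamma_{D,v})}^2$. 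Integrating over $(0,T)$, discarding the nonnegative endpoint term, and using $\|p_n\|_V^2\le C\|\nabla p_n\|_{L^2(\Omega)}^2$ (Poincar\'e) reproduces the right-hand side of \eqref{EEPVE} with a constant independent of $n$; no Gr\"onwall step is required, since the elastic energy $b_e(\mb{u}_n,\mb{u}_n)$ sits on the left with a favorable sign.

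Finally I would extract, along a subsequence, $\mb{u}_n\weakto\mb{u}$ in $H^1(0,T;\mb{V})$ and $p_n\weakto p$ in $L^2(0,T;V)$, and pass to the limit in the Galerkin identities: by linearity this is routine, first for test functions in $\bigcup_m\mb{V}_m$ and $\bigcup_m V_m$ and then by density in $\mb{V}$, $V$; the weak continuity of $\mb{u}\mapsto\mb{u}(0)$ on $H^1(0,T;\mb{V})$ yields $\mb{u}(0)=\mb{u}_0$, and \eqref{EEPVE} for the limit follows from weak lower semicontinuity of the norms. Uniqueness is immediate from linearity: the difference of two solutions solves the homogeneous problem with zero initial datum, and testing its momentum equation with $\mb{u}_t$ and its mass equation with $p$ and adding gives
\[ \delta\int_0^t\|\mb{u}_t\|_\mb{V}^2\,ds+c\,\|\mb{u}(t)\|_\mb{V}^2+k\int_0^t\|\nabla p\|_{L^2(\Omega)}^2\,ds\le0\qquad\text{for all }t\in[0,T], \]
so $\mb{u}\equiv0$ and, by Poincar\'e, $p\equiv0$. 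I expect the main obstacle to be the combination flagged in the middle two steps: treating the differential--algebraic structure of the Galerkin system (no pressure time-derivative) via the Schur complement, and verifying that for $\delta>0$ the viscous dissipation genuinely controls the body-force and traction contributions — the analogous bound is unavailable for $\delta=0$, which is precisely why \Cref{pe} must assume more time-regular data. Everything else is standard bookkeeping.
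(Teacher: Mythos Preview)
The paper does not prove this proposition at all: it is stated as a known result, with the sentence ``We have the following results on well-posedness of weak solutions \cite{BGSW,BS_AWM,BS-AA}'' preceding both \Cref{pe} and \Cref{pve}, and no argument is given in the text. So there is no ``paper's own proof'' to compare against; the authors simply import the well-posedness and energy estimate from the cited references.

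Your Galerkin outline is the standard route for this class of implicit, degenerate evolution problems and is, as far as I can see, correct. The key structural observations you make---the skew-adjoint coupling that cancels when testing with $(\mb{u}_t,p)$, the DAE character of the finite-dimensional system arising from $c_0=0$ and its resolution via a Schur complement, and the fact that $\delta>0$ is precisely what lets one absorb $(\mb{F},\mb{u}_t)$ and $\langle\mb{g},\mb{u}_t\rangle$ with only $L^2$-in-time data---are all to the point and match what one finds in the cited literature. One minor remark: in the energy estimate \eqref{EEPVE} the initial datum appears as $\|\mb{u}_0\|_\mb{V}$ rather than $\|\mb{u}_0\|_\mb{V}^2$; your argument naturally produces the squared quantity from $b_e(\mb{u}_n(0),\mb{u}_n(0))$, so either the stated estimate has a typo or an additional trivial bound is being used. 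This does not affect the substance of your proof.
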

Similar results hold when the control $w$ is used as the time portion of $S$. Therefore, we will let $\chi(x)$ be fixed and insert $q=\chi(x)w(t)$ into \eqref{Eq:balance}-\eqref{IC} in place of either $S$ or $\psi$. Let $G : L^2(0,T) \to L^2(0,T;\mb{L}^2(\Omega)) \times L^2(0,T;L^2(\Omega))$ be defined by mapping $w(t)\in L^2(0,T)$ to the unique solution of \eqref{Eq:balance}-\eqref{IC}, embedded in $L^2(0,T;\mb{L}^2(\Omega)) \times L^2(0,T;L^2(\Omega))$, with all sources set to zero except for $q$. Let $(\tilde{\mb{u}},\tilde{p})$ be the unique solution (in the spaces provided in \Cref{pe} and \Cref{pve}) to \eqref{Eq:balance}-\eqref{IC} with the sources set as desired and $q=0$.
We introduce the control problem
\begin{gather}\label{eq:control_problem}
\begin{aligned}
    \min_{w \in L^2(0,T)}\ &
    \frac{1}{2}\|\mb{u} - \mb{u}_d\|_{L^2(0,T;\mb{L}^2(\Omega))}^2
    + \frac{1}{2}\|p - p_d\|_{L^2(0,T;L^2(\Omega))}^2
    + \frac{\lambda}{2}\|w\|_{L^2(0,T)}^2
    + \alpha \TV(w)
    \\
    \text{s.t.}\quad\quad & 
    (\mb{u},p) = 
    Gw+ (\tilde{\mb{u}}, \tilde{p}),\\
    & w(t) \in W \text{ for a.a.\ } t \in (0,T)
\end{aligned}
\end{gather}
for given $\mb{u}_d \in L^2(0,T;\mb{L}^2(\Omega))$,
$p_d \in L^2(0,T;L^2(\Omega))$, $\chi \in L^2(\Gamma_{D,v})$, $\lambda \ge 0$,
and a finite set of integers $W \subset \Z$. We define
\begin{gather}\label{eq:J}
J(\mb{u},p,w) \coloneqq \frac{1}{2}\|\mb{u} - \mb{u}_d\|_{L^2(0,T;\mb{L}^2(\Omega))}^2
    + \frac{1}{2}\|p - p_d\|_{L^2(0,T;L^2(\Omega))}^2
    + \frac{\lambda}{2}\|w\|_{L^2(0,T)}^2
\end{gather}
for $(\mb{u},p,w) \in L^2(0,T;\mb{V}) \times L^2(0,T;V) \times L^2(0,T)$
and the reduced objective 
\begin{equation}\label{reducedobj}
j: L^2(0,T) \to \R, \ j(w) \coloneqq J(Gw+(\tilde{\mb{u}},  \tilde{p}), w)
\end{equation}
%for $w \in L^2(0,T)$. 
\\
Next we verify the applicability of the SLIP algorithm introduced in the previous section, by checking if \Cref{ass:weak_assumption} and \Cref{ass:strong_assumption} are satisfied.
%We obtain the following regularity results.
%
\subsection{Case 1: $\lambda>0$}
In this subsection, we prove that in both poroelastic and poroviscoelastic cases \Cref{ass:weak_assumption} is satisfied while the more stringent \Cref{ass:strong_assumption} is not satisfied.
\begin{proposition}\label{prp:strong_assumption_not_satisfied}
Let $\xi \in L^2(0,T)$. Then it follows that the reduced objective
$j : L^2(0,T) \to \R$ is twice continuously differentiable at $\xi$.
Moreover, $|\nabla^2 j(w)(\xi, \varphi)|=|(G\xi, G\varphi)+\lambda(\xi,\varphi)|$ and there exists $C > 0$ such that
\begin{equation}\label{L2bound} |\nabla^2 j(w)(\xi,\varphi)|
\le C \|\xi\|_{L^2(0,T)}
\|\varphi\|_{L^2(0,T)}
\end{equation}
for all $w \in L^2(0,T)$ and all $\xi$, $\varphi \in L^2(0,T)$ i.e. the Hessian is continuous on $L^2(0,T) \times L^2(0,T)$. 
When $\lambda > 0$, \Cref{ass:strong_assumption} does not hold.%
\end{proposition}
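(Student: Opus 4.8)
The plan is to use that, once the fixed affine shift coming from $(\tilde{\mb u},\tilde p)$ and the desired states is absorbed, $j$ is simply a continuous quadratic functional of $w$, so all derivatives can be written down explicitly. Set $Y \coloneqq L^2(0,T;\mb L^2(\Omega)) \times L^2(0,T;L^2(\Omega))$ with its canonical Hilbert-space inner product $(\cdot,\cdot)_Y$, and recall from \Cref{pe,pve} (via the energy estimates \eqref{EE} and \eqref{EEPVE}) that $G \colon L^2(0,T) \to Y$ is a bounded linear operator. Writing $z \coloneqq (\tilde{\mb u},\tilde p) - (\mb u_d, p_d) \in Y$, the definitions \eqref{eq:J} and \eqref{reducedobj} give $j(w) = \tfrac12\|Gw+z\|_Y^2 + \tfrac{\lambda}{2}\|w\|_{L^2(0,T)}^2$, and expanding the square, $j(w) = \tfrac12 (Gw,Gw)_Y + (Gw,z)_Y + \tfrac12\|z\|_Y^2 + \tfrac{\lambda}{2}\|w\|_{L^2(0,T)}^2$. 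This is a polynomial of degree two on the Hilbert space $L^2(0,T)$ whose coefficients are controlled by the bounded operator $G$, hence $j$ is $C^\infty$; in particular it is twice continuously Fr\'echet differentiable on all of $L^2(0,T)$ (so, a fortiori, at the given $\xi$).

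Reading the derivatives off this expansion (equivalently, by the chain rule \cite[Theorem 4.D]{zeidler2012applied}) yields $\nabla j(w) = G^*(Gw+z) + \lambda w$, which lies in $L^2(0,T) \hookrightarrow L^1(0,T)$ because $G^* \colon Y \to L^2(0,T)$ is the bounded adjoint of $G$, and the bilinear form $\nabla^2 j(w)(\xi,\varphi) = (G\xi, G\varphi)_Y + \lambda(\xi,\varphi)_{L^2(0,T)}$. This is exactly the asserted identity for $|\nabla^2 j(w)(\xi,\varphi)|$, and it is independent of $w$; hence $w \mapsto \nabla^2 j(w)$ is constant, in particular continuous, so the Hessian is continuous on $L^2(0,T)\times L^2(0,T)$. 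Two applications of the Cauchy--Schwarz inequality together with the operator bound for $G$ give $|\nabla^2 j(w)(\xi,\varphi)| \le \|G\xi\|_Y\|G\varphi\|_Y + \lambda\|\xi\|_{L^2(0,T)}\|\varphi\|_{L^2(0,T)} \le (\|G\|^2+\lambda)\|\xi\|_{L^2(0,T)}\|\varphi\|_{L^2(0,T)}$, so \eqref{L2bound} holds with $C \coloneqq \|G\|^2 + \lambda$, and with it \Cref{ass:weak_assumption}.

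For the final claim, that \Cref{ass:strong_assumption} fails when $\lambda > 0$, I would test the diagonal of the Hessian against shrinking indicators. Fix any $w \in L^2(0,T)$ and, for $n$ large enough that $1/n < T$, put $\psi_n \coloneqq \chi_{(0,1/n)}$. Then $\nabla^2 j(w)(\psi_n,\psi_n) = \|G\psi_n\|_Y^2 + \lambda\|\psi_n\|_{L^2(0,T)}^2 \ge \lambda\|\psi_n\|_{L^2(0,T)}^2 = \lambda/n$, while $\|\psi_n\|_{L^1(0,T)}^2 = 1/n^2$, so $\nabla^2 j(w)(\psi_n,\psi_n)/\|\psi_n\|_{L^1(0,T)}^2 \ge \lambda n \to \infty$ as $n \to \infty$. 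Therefore no finite $C$ can satisfy $|\nabla^2 j(w)(\psi,\phi)| \le C\|\psi\|_{L^1(0,T)}\|\phi\|_{L^1(0,T)}$ for all $\psi,\phi \in L^2(0,T)$, which is precisely the requirement of \Cref{ass:strong_assumption}. There is no genuine obstacle in this proof: the only care needed is the bookkeeping for the product space $Y$ and the adjoint $G^*$, and the fact --- already supplied by \Cref{pe,pve} --- that $G$ is bounded from $L^2(0,T)$ into $Y$.
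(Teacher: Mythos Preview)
Your proof is correct and follows essentially the same approach as the paper: compute the Hessian explicitly as $(G\xi,G\varphi)_Y + \lambda(\xi,\varphi)$, bound it via Cauchy--Schwarz and the operator norm of $G$, then use the lower bound $\nabla^2 j(w)(\varphi,\varphi) \ge \lambda\|\varphi\|_{L^2}^2$ together with a sequence whose $L^2$-to-$L^1$ norm ratio blows up. The only cosmetic difference is your test sequence: you take shrinking indicators $\chi_{(0,1/n)}$, whereas the paper uses the polynomials $n(t/T)^n$; your choice is slightly more elementary but the mechanism is identical.
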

\begin{proof}
Let $\tilde{y}=(\mb{u}_d-\tilde{\mb{u}}, p_d-\tilde{p})$. Based on the definition \eqref{reducedobj} of the reduced functional $j$, we have that
$$\grad j(q)=(G(\cdot), Gq - \tilde{y}) + \lambda (\cdot, q)  \ \text{and}\  
 \nabla^2 j(q)(\xi,\varphi)=(G\varphi, G\xi)+ \lambda(\varphi, \xi)$$
Furthermore, we can estimate the Hessian of $j$ as follows:
\begin{equation}\begin{split} \label{hessian} \left|\nabla^2 j(q)(\xi, \varphi)\right|&=|(G\varphi, G\xi)+ \lambda(\varphi, \xi)|\\
&\leq \|G\varphi\|_{L^2(0,T;\mb{L}^2(\Omega) \times L^2(\Omega))}\|G\xi\|_{L^2(0,T;\mb{L}^2(\Omega) \times L^2(\Omega))}+ \lambda \|\varphi\|_{L^2(0,T)}\|\xi\|_{L^2(0,T)}\\
& \leq (C^2+\lambda)\|\varphi\|_{L^2(0,T)}\|\xi\|_{L^2(0,T)},
\end{split}\end{equation} which provides the desired estimate \eqref{L2bound}.
Additionally, we note that 
\begin{equation} \label{star}\lambda \|\varphi\|^2_{L^2(\Omega)}\leq \lambda (\varphi, \varphi)+(G\varphi, G\varphi)=\nabla^2j(q)(\varphi,\varphi)\quad \text{for all } \varphi \in L^2(0,T).\end{equation}
Consider the sequence $\phi_n(t)=n\left(\frac{t}{T}\right)^n \in L^2(0,T)$. Note that 
\begin{equation}\label{norml1}
\|\phi_n\|^2_{L^1(0,T)}=\left(\int_0^T \left|n\left(\frac{t}{T}\right)^n\right|dt\right)^2=\frac{n^2T^2}{(n+1)^2}
\end{equation}
\begin{equation}\label{norml2}
\lambda\|\phi_n\|^2_{L^2(0,T)}=\lambda \int_0^T \left|n\left(\frac{t}{T}\right)^n\right|^2dt=\frac{\lambda n^2T}{2n+1}.
\end{equation}
Combining \eqref{star} with  \eqref{norml1} and \eqref{norml2} we obtain 
\[\frac{|\nabla^2 j(w) (\phi_n, \phi_n)|}{\|\phi_n\|_{L^1(0,T)}^2} \geq \frac{\lambda \|\phi_n\|^2_{L^2(\Omega)}}{\|\phi_n\|_{L^1(\Omega)}^2}=\frac{\lambda n^2 T}{2n+1} \frac{(n+1)^2}{n^2 T^2}=\lambda \frac{(n+1)^2}{(2n+1)T}\to \infty\quad \text{as }n \to \infty.\]
Hence, \Cref{ass:strong_assumption} does not hold.
\end{proof}
\subsection{Case 2: $\lambda =0$}
Now we consider the case $\lambda = 0$. For the following proofs and numerical results, we consider the problem in one spatial dimension i.e. $\Omega=(0,L).$
Following \cite{MBE18, MBE19}, the one dimensional formulation of the poroviscoelastic systems is given by 
\begin{align}
\label{firsteq} H_v\frac{\partial^3 u}{\partial x^2 \partial t}+H_e\frac{\partial^2 u}{\partial x^2}-\frac{\partial p}{\partial x}&=0 &&\forall(x,t) \in [0,L] \times [0,T]\\
\frac{\partial^2 u}{\partial x \partial t}-k\frac{\partial^2 p}{\partial x^2} &=S &&\forall (x,t) \in [0,L] \times [0,T]\\
u(0,t)=p(0,t)&=0 &&\forall t \in [0,T]\\
-k\frac{\partial p}{\partial x}(L,t)&=\psi(t) &&\forall t \in [0,T]\\
\label{sigmabc}H_v\frac{\partial^2 u}{\partial x\partial t}(L,t)+H_e\frac{\partial u}{\partial x}(L,t)-p(L,t)&=0 &&\forall t \in [0,T]\\
\frac{\partial u}{\partial x}(x,0) &=0 &&\forall x \in [0,L] \label{1DIC}
\end{align}
where $H_e=\lambda_e + 2 \mu_e$ and $H_v=\delta(\lambda_v +2 \mu_v)$.
When $F=0$, we know $p(x,t)=H_e\frac{\partial u}{\partial x} +H_v\frac{\partial^2u}{\partial x \partial t}+h(t)$. 
When $g=0$, applying (\ref{sigmabc}) shows $h(t)$ must be equal to 0.
Therefore, when $F=0$ and $g=0$ we have
\begin{equation} \label{pequal} p(x,t)=H_e\frac{\partial u}{\partial x}+H_v\frac{\partial^2 u}{\partial x \partial t}. \end{equation}
When $\lambda=0$, we will see that \Cref{ass:strong_assumption} holds in the poroelastic case, but not the poroviscoelastic case. We will study these two cases separately.
\subsubsection{Poroelastic Case}
We will first consider the case when $\psi$ is used as the control. Then we will study the case where the control $w(t)$ enters the system in the source $S$, i.e. $S(x,t)=\chi(x)w(t)$.
\begin{theorem}\label{5.4}
Let $\lambda = 0$ and $\delta=0$. Let $k(x,t)=k$ where $k$ is a positive constant and let $G$ map a control $\psi$ to the state $(u,p)$ that satisfies \eqref{firsteq}-\eqref{1DIC} with all sources set to zero except $\psi(t)$. Then \Cref{ass:strong_assumption} holds with $w=\psi$.
\end{theorem}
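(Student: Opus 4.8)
The plan is to exploit the one‑dimensional structure to reduce the control‑to‑state map $G$ to the solution operator of a scalar heat equation and then to prove the smoothing estimate $\|G\xi\|_{L^2(0,T;L^2(0,L)) \times L^2(0,T;L^2(0,L))} \le C\|\xi\|_{L^1(0,T)}$. By Cauchy--Schwarz this is all that is needed: with $\lambda = 0$ the computation in the proof of \Cref{prp:strong_assumption_not_satisfied} gives $\nabla^2 j(w)(\xi,\varphi) = (G\xi,G\varphi)_{L^2(0,T;L^2(0,L)) \times L^2(0,T;L^2(0,L))}$, independently of $w$, and twice Fréchet differentiability of $j$ follows exactly as there since $j$ is quadratic in the control.

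\emph{Reduction.} Setting $\delta = 0$ (hence $H_v = 0$) in \eqref{firsteq}--\eqref{1DIC} and using \eqref{pequal} (valid because $F = g = 0$), I would introduce $q \coloneqq u_x = p/H_e$. Differentiating the balance laws and substituting, $q$ solves $q_t - kH_e q_{xx} = 0$ on $(0,L)\times(0,T)$ with $q(0,\cdot) = 0$ (from $p(0,\cdot) = 0$), $q_x(L,\cdot) = -\xi/(kH_e)$ (from $-kp_x(L,\cdot) = \xi$ and \eqref{firsteq}), and $q(\cdot,0) = 0$ (from \eqref{1DIC}), where $\xi$ is the control direction. The state is recovered by $p^\xi = H_e q$ and $u^\xi(x,\cdot) = \int_0^x q(y,\cdot)\dd y$. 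Using $u^\xi(0,\cdot) = 0$ and Cauchy--Schwarz in $x$ one gets $\|u^\xi(\cdot,t)\|_{L^2(0,L)} \le L\,\|q(\cdot,t)\|_{L^2(0,L)}$, so both $\|u^\xi\|_{L^2(0,T;L^2(0,L))}$ and $\|p^\xi\|_{L^2(0,T;L^2(0,L))}$ are bounded by a constant depending only on $L$ and $H_e$ times $\|q\|_{L^2(0,T;L^2(0,L))}$.

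\emph{Key estimate.} It then remains to show $\|q\|_{L^2(0,T;L^2(0,L))} \le C\|\xi\|_{L^1(0,T)}$, which I would prove by duality. For $z \in C_c^\infty((0,T)\times(0,L))$, let $\omega$ solve the backward adjoint heat equation $-\omega_t - kH_e\omega_{xx} = z$ with $\omega(0,\cdot) = 0$, $\omega_x(L,\cdot) = 0$, $\omega(\cdot,T) = 0$; by standard parabolic regularity $\omega \in H^1(0,T;L^2(0,L)) \cap L^2(0,T;H^2(0,L))$. Integrating by parts in $x$ and $t$ and using the equation for $q$ together with the stated boundary, initial and terminal conditions, all boundary contributions cancel except one, leaving
\[ \int_0^T \int_0^L q(x,t)\,z(x,t)\dd x\dd t = -\int_0^T \xi(t)\,\omega(L,t)\dd t. \]
Testing the adjoint equation with $\omega_t$ and integrating over $(0,L)$ (the boundary terms $\omega_x(L)\omega_t(L)$ and $\omega_x(0)\omega_t(0)$ vanish, since $\omega_x(L,\cdot) = 0$ and $\omega(0,\cdot) \equiv 0$ forces $\omega_t(0,\cdot) = 0$), absorbing $\int z\omega_t$ by Young's inequality, and then integrating in time from $t$ to $T$ and using $\omega_x(\cdot,T) = 0$, I obtain the pointwise‑in‑time bound $\|\omega_x(\cdot,t)\|_{L^2(0,L)}^2 \le (kH_e)^{-1}\|z\|_{L^2(0,T;L^2(0,L))}^2$. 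Since $\omega(0,t) = 0$, this yields $|\omega(L,t)| \le \sqrt{L/(kH_e)}\,\|z\|_{L^2(0,T;L^2(0,L))}$ for every $t$, hence $\|\omega(L,\cdot)\|_{L^\infty(0,T)} \le \sqrt{L/(kH_e)}\,\|z\|_{L^2(0,T;L^2(0,L))}$. Plugging this into the identity and taking the supremum over $z$ in the unit ball of $L^2(0,T;L^2(0,L))$ (dense directions suffice) proves the key estimate; combined with the reduction step and Cauchy--Schwarz it gives $|\nabla^2 j(w)(\xi,\varphi)| \le C\|\xi\|_{L^1(0,T)}\|\varphi\|_{L^1(0,T)}$ with $C$ depending only on $L$, $k$ and $H_e$, i.e.\ \Cref{ass:strong_assumption}.

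\emph{Main obstacle.} The delicate point is justifying the two integrations by parts at the regularity actually available: $\xi \in L^2(0,T)$ only furnishes an $L^2$‑in‑time Neumann datum for $q$, so $q$ need not be smooth up to $\{x = L\}$, and the energy argument for $\omega$ requires $\omega \in H^1(0,T;L^2(0,L)) \cap L^2(0,T;H^2(0,L))$. I would circumvent this by carrying out the identity and the trace bound first for $\xi \in C_c^\infty(0,T)$ and $z \in C_c^\infty((0,T)\times(0,L))$, where all manipulations are classical, and then passing to the limit: $G \colon L^2(0,T) \to L^2(0,T;L^2(0,L)) \times L^2(0,T;L^2(0,L))$ is bounded by the energy estimate of \Cref{pe}, $C_c^\infty(0,T)$ is dense in $L^2(0,T)$, and $L^2(0,T)$‑convergence implies $L^1(0,T)$‑convergence on the bounded interval, so $\|G\xi\| \le C\|\xi\|_{L^1(0,T)}$ extends to all $\xi \in L^2(0,T)$. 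One should also record that the constant is independent of $w$ and of the fixed data $\tilde{\mb u},\tilde p,\mb u_d,p_d$ (the Hessian does not see them), so the bound of \Cref{ass:strong_assumption} holds uniformly.
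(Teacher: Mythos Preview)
Your argument is correct and follows a genuinely different route from the paper's. The paper computes $G\xi$ explicitly by Fourier expansion: it lifts the Neumann datum into a source by setting $\rho = p + \tfrac{x}{k}\xi(t)$, expands $\rho$ in the sine basis $\sqrt{2/L}\sin(\lambda_n x)$, solves the resulting ODEs for the coefficients, integrates by parts to trade $\xi'$ for $\xi$, and then bounds $\|p\|_{L^2(0,T;L^2)}$ by summing over modes and applying Young's convolution inequality to the heat kernel $e^{-kH_e\lambda_n^2 t}$ in time; the extension to general $\xi$ is by Bounded Linear Extension from $C_0^1(0,T)$. Your approach instead works by transposition: you reduce to the scalar heat problem for $q = u_x$, pair against a smooth $z$, pass to the backward adjoint $\omega$, and convert the estimate on $q$ into the trace bound $\|\omega(L,\cdot)\|_{L^\infty(0,T)} \le \sqrt{L/(kH_e)}\,\|z\|_{L^2}$, which you obtain from the energy identity tested with $\omega_t$. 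The paper's route is fully constructive and tied to the explicit 1D spectral decomposition; yours is softer, avoids any series manipulation, and would transfer with little change to more general domains or boundary configurations once the corresponding $L^\infty$-in-time trace bound for the adjoint is available. Both yield explicit constants depending only on $L$, $k$ and $H_e$, and both handle the limited regularity of $\xi$ by proving the estimate first for smooth controls and passing to the limit.
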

\begin{proof}
Recall from \eqref{hessian}, that $\nabla ^2 j(\psi)(\xi, \varphi)=(G\xi, G\varphi)$. We will first calculate $G\xi$ when $\xi \in C_0^1(0,T)$. Then we will use the Bounded Linear Extension Theorem to show \Cref{ass:strong_assumption} holds. Note that $\xi$ is only a function of $t$ because we are considering the one-dimensional case where $\Gamma_N$ is only one point.
Since $\delta=0$, $H_v=0$. Plugging \eqref{pequal} into the PDE, we see that $p(x,t)$ needs to satisfy
\begin{align*}
    \frac{\partial p}{\partial t}-kH_e\frac{\partial ^2 p}{\partial x^2}&=0 \quad && \forall (x,t) \in [0,L] \times [0,T]\\
    p(0,t)=0,\quad -k\frac{\partial p}{\partial x}(L,t)&=\xi(t) \quad &&\forall t \in [0,T]\\
    p(x,0)&=0  \quad &&\forall x \in [0,L].
\end{align*}
Let $\rho=p +\frac{x}{k} \xi(t)$. Then, we see
\begin{align*}
    \frac{\partial \rho}{\partial t}-kH_e \frac{\partial ^2 \rho}{\partial x^2}&=\frac{x}{k}\xi'(t)\quad &&\forall (x,t) \in [0,L] \times [0,T]\\
    \rho(0,t)=0, \quad -k\frac{\partial \rho}{\partial x}(L,t)&=0 \quad &&\forall t \in [0,T]\\
    \rho(x,0)&=0\quad &&\forall x \in [0,L].
\end{align*}
Let $\lambda_n=\frac{(2n-1)\pi}{2L}$. Note that $(\sqrt{2/L}\sin(\lambda_n x))_{n \in \mathbb{N}}$ is a complete orthonormal basis. Let 
\[\rho(x,t)=\sum_{n=1}^\infty f_n(t) \sqrt{2/L}\sin(\lambda_n x).\] Then we see $\rho(x,t)$ satisfies all boundary conditions, and we have
\[\sum_{n=1}^\infty f'_n(t)\sqrt{2/L}\sin(\lambda_n x) + \lambda_n^2 kH_e \sum_{n=1}^\infty f_n(t) \sqrt{2/L} \sin(\lambda_n x) =\frac{x}{k} \xi'(t)\]
and 
\[\sum_{n=1}^\infty f_n(0)\sqrt{2/L} \sin(\lambda_n x)=0.\]
Multiplying both sides of these equations by $\sqrt{2/L}\sin(\lambda_n x)$, integrating these equations from $0$ to $L$, and setting $c_n=(\frac{x}{k}, \sqrt{2/L}\sin(\lambda_n x))_{L^2(0,L)}=\frac{(-1)^{n}\sqrt{2L}}{k\lambda_n^2}$, we have
\[f_n'(t)+kH_e\lambda_n^2 f_n(t)=c_n \xi'(t)\quad \text{and} \quad f_n(0)=0.\]
Therefore, $f_n(t)=e^{-kH_e\lambda_n^2t}\int_0^t  c_n e^{kH_e\lambda_n^2\xi} \xi'(\xi) d \xi $.
Hence, we have 
\begin{align*}\rho(x,t)&=\sum_{n=1}^\infty e^{-kH_e\lambda_n^2t}\int_0^t  c_n e^{kH_e\lambda_n^2\xi} \xi'(\xi) d \xi \sqrt{2/L} \sin(\lambda_n x),\\
p(x,t)&=\sum_{n=1}^\infty e^{-kH_e\lambda_n^2t}\int_0^t  c_n e^{kH_e\lambda_n^2\xi} \xi'(\xi) d \xi \sqrt{2/L} \sin(\lambda_n x)-\frac{x}{k}\xi(t).
\end{align*}
Using \eqref{pequal} and recalling that $H_v=0$, we anti-differentiate $p$ with respect to $x$ and enforce the boundary condition $u(x,0)=0$, to see
\[u(x,t)= \sum_{n=1}^\infty \frac{e^{-kH_e\lambda_n^2t}}{H_e\lambda_n}\int_0^t  c_n e^{kH_e\lambda_n^2\xi} \xi'(\xi) d \xi \sqrt{2/L} (1-\cos(\lambda_n x))-\frac{x^2}{2kH_e}\xi(t).\]

 %\begin{align*}
  %   p(x,t)&=\sum_{n=1}^\infty \frac{e^{-kH\lambda_n^2t}}{H\lambda_n} \sqrt{2/L}\sin(\lambda_n x)\int_0^t c_n e^{kH\lambda_n^2 \xi} m^2 \cos(m\xi)d\xi -\frac{mx}{k}\sin(mt)\\
   %  &=\sum_{n=1}^\infty \frac{e^{-kH\lambda_n^2t}}{H\lambda_n}\sqrt{2/L}\sin(\lambda_n x)c_n m^2\frac{e^{kH\lambda_n^2 t}(kH\lambda_n^2 \cos(mt)+m\sin(mt))-kH\lambda_n^2}{k^2H^2\lambda_n^4 + m^2}-\frac{mx}{k}\sin(mt)\\
    % &=\sum_{n=1}^\infty \frac{\sqrt{2/L}\sin(\lambda_n x)}{H\lambda_n}c_n m^2 \frac{kH\lambda_n^2\cos(mt) + m\sin(mt) -e^{-kH\lambda_n^2t}kH\lambda_n^2}{k^2H^2 \lambda_n^4+m^2}-\frac{mx}{k}\sin(mt)
 %\end{align*}

 Using the fact that $c_n=(\frac{x}{k}, \sqrt{2/L}\sin(\lambda_nx))_{L^2(\Omega)}$, $(\sqrt{2/L}\sin(\lambda_nx))_{n \in \mathbb{N}}$ is an orthonormal sequence, Lebesgue Dominated Convergence Theorem, and Parseval's equality, we have
\begin{align*}
   & \|p\|^2_{L^2(0,T;L^2(0,L))}\\
   &=\int_0^T \sum_{n=1}^\infty c_n^2 \left[\left(e^{-kH_e\lambda_n^2t}\int_0^t e^{kH_e\lambda_n^2 \xi}\xi'(\xi) d \xi \right)^2-2e^{-kH_e\lambda_nt}\int_0^t e^{kH_e\lambda_n^2 \xi}\xi'(\xi) d \xi \xi(t) \right]+\frac{x^2}{k^2}\xi^2(t)dt\\
   &=\int_0^T \sum_{n=1}^\infty c_n^2 \left[\left(e^{-kH_e\lambda_n^2t}\int_0^t e^{kH_e\lambda_n^2 \xi}\xi'(\xi) d \xi \right)^2-2e^{-kH_e\lambda_nt}\int_0^t e^{kH_e\lambda_n^2 \xi}\xi'(\xi) d \xi \xi(t)+\xi^2(t) \right]dt.
\end{align*}
Integrating by parts and using $\xi(0)=0$, we see
\begin{align*}
    &\|p\|^2_{L^2(0,T;L^2(0,L))}=\int_0^T \sum_{n=1}^\infty c_n^2 \left[\left(e^{-kH_e\lambda_n^2 t}\left(\xi(t)e^{kH_e\lambda_n^2t}-\int_0^t \xi(\xi) kH_e\lambda_n^2 e^{kH_e\lambda_n^2\xi}d\xi\right)\right)^2 \right.\\
     \MoveEqLeft[-1]\left.-2e^{-kH_e\lambda_n^2 t}\left(\xi(t)e^{kH_e\lambda_n^2t}-\int_0^t \xi(\xi) kH_e\lambda_n^2 e^{kH_e\lambda_n^2\xi}d\xi\right)\xi(t)+\xi^2(t) \right]dt\\
    &=\int_0^T \sum_{n=1}^\infty c_n^2 \left[\xi^2(t) -2 \xi(t)\int_0^t \xi(\xi) k H_e\lambda_n^2 e^{kH_e\lambda_n^2 (\xi-t)} d\xi + \left(\int_0^t \xi(\xi)kH_e\lambda_n^2 e^{kH_e\lambda_n^2(\xi-t)} d \xi\right)^2 -2 \xi^2(t) \right.\\
    \MoveEqLeft[-1]\left.+2\xi(t) \int_0^t \xi(\xi) kH_e\lambda_n^2 e^{kH_e\lambda_n^2 (\xi-t)}d\xi +\xi^2(t)\right]dt=\int_0^T \sum_{n=1}^\infty c_n^2 \left(\int_0^t \xi(\xi) kH_e\lambda_n^2 e^{kH_e\lambda_n^2(\xi-t)}d \xi \right)^2dt\\
    %&= \int_0^T \sum_{n=1}^\infty c_n^2 k^2H^2\lambda_n^4 \left(\int_0^t \xi(\xi) e^{kH\lambda_n^2(\xi-t)}d\xi\right)^2dt\\
   & \leq \int_0^T \sum_{n=1}^\infty c_n^2 k^2 H_e^2 \lambda_n^4 \left(\int_0^T \xi(\xi) e^{-kH_e\lambda_n^2(t-\xi)}d\xi\right)^2 dt \leq \int_0^T \sum_{n=1}^\infty H_e^2 \left(\int_0^T \xi(\xi) e^{-kH_e\lambda_n^2(t-\xi)}d\xi\right)^2 dt.
    \end{align*}
%Recall, Young's inequality for convolution: Given $f \in L^p(0,T)$ and $g\in L^q(0,T)$ such that $\frac{1}{p} + \frac{1}{q}=1+ \frac{1}{r}$ with $1\leq p,q, r \leq \infty$,
%   $ \|f*g\|_{L^r(0,T)} \leq C\|f\|_{L^p(0,T)}\|g\|_{L^q(0,T)}$. 
   Using $p=1$, $q=2$, and $r=2$ in \eqref{Yineq}, we have
   \begin{align*}
\|p\|^2_{L^2(0,T;L^2(\Omega))}&
    \leq \sum_{n=1}^\infty H_e^2 \|e^{-kH_e\lambda_n^2 (\cdot)}\|_{L^2(0,T)}^2\|\xi\|_{L^1(0,T)}^2
    \leq \sum_{n=1}^\infty \frac{ H_e^2}{2kH_e\lambda_n^2}\|\xi(\xi)\|^2_{L^1(0,T)}\\
    &=\frac{H_e}{2k}\|\xi\|_{L^1(0,T)}^2\sum_{n=1}^\infty\frac{4L^2}{(2n-1)^2\pi^2}=\frac{H_eL^2}{4k}\|\xi\|_{L^1(0,T)}^2
\end{align*}
Additionally, integrating \eqref{pequal} with respect to $x$, we have

\begin{align*}
    \|u\|_{L^2(0,T;L^2(0,L))}^2 &=\int_0^T \int_0^L \left(\int_0^x p(\xi,t) d\xi\right)^2 dx dt
    %\leq \int_0^T \int_0^L \|p\|_{L^1(0,L)}^2 dx dt\leq L\|p\|_{L^2(0,T;L^2(0,L)}^2 
    \leq \frac{H_eL^3}{4k}\|\xi\|_{L^1(0,T)}^2
\end{align*}
Therefore, for all $\xi\in C_0^1(0,T)$, we have
\begin{equation} \label{Gxibound}\|G\xi\|^2_{L^2(0,T;L^2(0,L) \times L^2(0,L))}\leq C\|\xi\|_{L^1(0,T)}^2.\end{equation} Hence, $G|_{C_0^1(0,T)}$ is bounded and linear. Using the Bounded Linear Extension theorem, we have that the extension $G:L^1(0,T) \to L^2(0,T;L^2(0,L) \times L^2(0,L))$ is a bounded linear functional. Hence, \eqref{Gxibound} holds for all $\xi \in L^1(0,T)$. Thus, $|\nabla^2 j(\psi)(\xi, \varphi)|=(G\xi, G\varphi)\leq \|G\xi\|_{L^2(0,T;L^2(0,L)\times L^2(0,L))}\|G\varphi\|_{L^2(0,T;L^2(0,L) \times L^2(0,L))} \leq C^2 \|\xi\|_{L^1(0,T)}\|\varphi\|_{L^1(0,T)}$.
\end{proof}
Now we consider the poroelastic case with $\lambda=0$ where the control 
is used in the source $S$. Let $\chi(x)$ be set and consider the case 
where $G$ maps $s\in L^2(0,T)$ to the unique solution $(u,p)$ to 
\eqref{Eq:balance}-\eqref{IC} with all sources and the initial condition 
set to zero except $S \coloneqq \chi(x)s(t)$. In this case, the process 
of finding  $p(x,t)$ is similar to the process of finding $\rho(x,t)$ in 
the proof of \Cref{5.4}. Hence, $Gs=(u,p)$ where
\[p(x,t)=\sum_{n=1}^\infty e^{-kH_e \lambda_n^2 t}\int_0^t d_n e^{kH_e \lambda_n^2 \xi} s(\xi) d \xi\sqrt{2/L}\sin(\lambda_n x),\]
and applying \eqref{pequal}
\[u(x,t)=\sum_{n=1}^\infty \frac{e^{-kH_e \lambda_n^2 t}}{H_e \lambda_n}\int_0^t d_n e^{kH_e \lambda_n^2 \xi} s(\xi)d \xi \sqrt{2/L} \sin(\lambda_n x)\]
where $d_n=(\chi(x), \sqrt{2/L}\sin(\lambda_n x))_{L^2(0,L)}.$ The proof for showing this solution also satisfies \Cref{ass:strong_assumption} follows similarly to the proof of \Cref{5.4}.
\subsubsection{Poroviscoelastic Case}
We show that \Cref{ass:strong_assumption} is not satisfied in the poroviscoelastic case, for both choices of controls $\psi$ and $S$.
\begin{theorem}
Let $\lambda=0$ and $\delta>0$. Let $k(x,t)=k$ where $k$ is a positive constant and let $G$ map a control $\psi$ to the state $(u,p)$ that satisfies \eqref{firsteq}-\eqref{1DIC} with all sources set to zero except $\psi(t)$. Then \Cref{ass:strong_assumption} does not hold.
\end{theorem}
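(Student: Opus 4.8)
The plan is to use the identity $\nabla^2 j(\psi)(\xi,\varphi)=(G\xi,G\varphi)$ from \eqref{hessian} (valid since $\lambda=0$, and independent of the base point $\psi$), so that \Cref{ass:strong_assumption} would require a \emph{uniform} bound $\|G\xi\|^2_{L^2(0,T;L^2(0,L)\times L^2(0,L))}\le C\|\xi\|^2_{L^1(0,T)}$. I will contradict this by producing a sequence $(\psi_n)_n$ that is bounded in $L^1(0,T)$ but with $\|G\psi_n\|\to\infty$. The reason this is possible precisely when $\delta>0$ --- and fails when $\delta=0$, cf.\ \Cref{5.4} --- is that here $H_v=\delta(\lambda_v+2\mu_v)>0$, which turns the pressure equation from parabolic into pseudoparabolic and thereby destroys the parabolic smoothing exploited in \Cref{5.4}.

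First I would carry out the modal computation in parallel with the proof of \Cref{5.4}. Eliminating the dilation $\zeta\coloneqq u_x$ between \eqref{pequal} (which reads $p=H_e\zeta+H_v\zeta_t$) and the mass balance $u_{xt}=kp_{xx}$ (from \eqref{firsteq}--\eqref{1DIC} with $S=0$) shows that $p=G\psi$ solves the pseudoparabolic problem $p_t-kH_v p_{xxt}=kH_e p_{xx}$ on $(0,L)\times(0,T)$, with $p(0,t)=0$, $-kp_x(L,t)=\psi(t)$, and --- for controls with $\psi(0)=0$, which by \eqref{1DIC} forces $p(\cdot,0)\equiv0$ --- initial datum $p(x,0)=0$. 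Homogenizing the Neumann datum with the shift $p=q-\tfrac{x}{k}\psi(t)$ and expanding $q=\sum_n g_n(t)\sqrt{2/L}\sin(\lambda_n x)$ in the basis $\lambda_n=\tfrac{(2n-1)\pi}{2L}$ of \Cref{5.4} decouples the problem into the scalar equations $g_n'+\mu_n g_n=\tfrac{c_n}{1+kH_v\lambda_n^2}\,\psi'$, $g_n(0)=0$, where $c_n=(\tfrac{x}{k},\sqrt{2/L}\sin(\lambda_n x))_{L^2(0,L)}\neq0$ are exactly the coefficients from the proof of \Cref{5.4} (so $|c_n|=\Theta(\lambda_n^{-2})$) and $\mu_n\coloneqq\tfrac{kH_e\lambda_n^2}{1+kH_v\lambda_n^2}$. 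The single observation that drives the argument is that $\mu_n\to H_e/H_v<\infty$ as $n\to\infty$, so the $\mu_n$ lie in a bounded positive interval $[\mu_1,H_e/H_v)$, in sharp contrast with the rates $kH_e\lambda_n^2\to\infty$ of \Cref{5.4}.

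Solving the ODEs and integrating by parts once (using $\psi(0)=0$), I would write $g_n(t)=\tfrac{c_n}{1+kH_v\lambda_n^2}\bigl(\psi(t)-\mu_n\int_0^t e^{-\mu_n(t-s)}\psi(s)\,ds\bigr)$ and, after reinserting the shift, split $p(x,t)=\psi(t)\,\phi_*(x)-R(x,t)$, where $\phi_*(x)\coloneqq\sum_n\bigl(\tfrac{c_n}{1+kH_v\lambda_n^2}-c_n\bigr)\sqrt{2/L}\sin(\lambda_n x)$ and the $n$-th coefficient of $R$ equals $\tfrac{c_n\mu_n}{1+kH_v\lambda_n^2}\int_0^t e^{-\mu_n(t-s)}\psi(s)\,ds$. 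Since $\tfrac{c_n}{1+kH_v\lambda_n^2}-c_n=-\tfrac{kH_v\lambda_n^2}{1+kH_v\lambda_n^2}c_n\neq0$ for every $n$, the function $\phi_*$ is a nonzero element of $L^2(0,L)$; put $c_*\coloneqq\|\phi_*\|_{L^2(0,L)}>0$. Because $|c_n|=\Theta(\lambda_n^{-2})$ and $\mu_n=\Theta(1)$, the coefficients of $R$ are $\Theta(\lambda_n^{-4})$, so Young's convolution inequality \eqref{Yineq} with $p=2,q=1,r=2$ (together with $\|e^{-\mu_n(\cdot)}\|_{L^2(0,T)}$ being bounded uniformly in $n$) and Parseval in $x$ give $\|R\|_{L^2(0,T;L^2(0,L))}\le C\|\psi\|_{L^1(0,T)}$ with $C<\infty$, since $\sum_n\lambda_n^{-8}<\infty$. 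On the other hand $\|\psi(\cdot)\,\phi_*\|_{L^2(0,T;L^2(0,L))}=c_*\|\psi\|_{L^2(0,T)}$, so $\|G\psi\|\ge\|p\|_{L^2(0,T;L^2(0,L))}\ge c_*\|\psi\|_{L^2(0,T)}-C\|\psi\|_{L^1(0,T)}$.

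To conclude I would insert $\psi_n(t)\coloneqq n(t/T)^n/\|n(t/T)^n\|_{L^1(0,T)}$, the normalized version of the building block used in \Cref{prp:strong_assumption_not_satisfied}: it satisfies $\psi_n(0)=0$, $\|\psi_n\|_{L^1(0,T)}=1$, and, by \eqref{norml1}--\eqref{norml2}, $\|\psi_n\|_{L^2(0,T)}^2=\tfrac{(n+1)^2}{(2n+1)T}\to\infty$. Then $|\nabla^2 j(\psi)(\psi_n,\psi_n)|/(\|\psi_n\|_{L^1(0,T)}\|\psi_n\|_{L^1(0,T)})=\|G\psi_n\|^2\ge\bigl(c_*\|\psi_n\|_{L^2(0,T)}-C\bigr)^2\to\infty$, so no finite constant can serve in \Cref{ass:strong_assumption}, i.e.\ \Cref{ass:strong_assumption} fails. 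The only real obstacle is making the modal representation rigorous: since the forcing $\psi'$ appears before the integration by parts, one must justify term-by-term differentiation and the exchange of the series with the $x$- and $t$-integrals, and check that the initial condition indeed reduces to $p(\cdot,0)=0$ for the $\psi_n$ employed --- all of which runs exactly parallel to the proof of \Cref{5.4}, the genuinely new input being only the uniform boundedness of the modal rates $\mu_n$. The case where the control enters through the source $S$ is handled identically, with $c_n$ replaced by the Fourier coefficients of the fixed spatial profile $\chi$, as in the poroelastic computation following \Cref{5.4}.
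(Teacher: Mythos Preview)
Your proof is correct and takes a genuinely different route from the paper's. The paper works with the displacement $u$ first, introducing the substitution $y = u + \tfrac{x^2}{2L}\Psi(t)$ with $\Psi(t) = \tfrac{1}{kH_v}e^{-H_e/H_v\,t}\int_0^t e^{H_e/H_v\,s}\psi(s)\,ds$, expands $y$ modally, derives $p$ via \eqref{pequal}, and then tests with the exponential family $\varphi_m(t) = e^{mt}$; the contradiction is extracted from a lengthy explicit lower bound on $\|p_m\|^2_{L^2}$ obtained by isolating a single mode $l$ chosen so that $\gamma_l = (1+kH_v\lambda_l^2)^{-1}$ is small enough.

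Your approach is more structural: you work directly with the pseudoparabolic equation for $p$, identify the mechanism --- the modal rates $\mu_n \to H_e/H_v$ stay bounded, in contrast to the diverging rates $kH_e\lambda_n^2$ of the poroelastic case in \Cref{5.4} --- and split $p = \psi(t)\phi_*(x) - R$ into a term equivalent to $\|\psi\|_{L^2(0,T)}$ and a remainder controlled by $\|\psi\|_{L^1(0,T)}$. This yields the clean inequality $\|p\|_{L^2} \ge c_*\|\psi\|_{L^2} - C\|\psi\|_{L^1}$, after which the polynomial test functions already used in \Cref{prp:strong_assumption_not_satisfied} conclude immediately. Your argument isolates exactly where viscoelasticity enters (the function $\phi_*$ is nonzero precisely because $H_v > 0$) and reuses the building blocks of \Cref{5.4} and \Cref{prp:strong_assumption_not_satisfied} rather than introducing a new family of test functions and a bespoke single-mode estimate. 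As a minor bonus, your test functions satisfy $\psi_n(0) = 0$, so the reduction to the initial datum $p(\cdot,0) = 0$ is justified directly; the paper's choice $e^{mt}$ does not vanish at $t=0$ and, strictly speaking, requires an additional density/extension argument to be made rigorous.
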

\begin{proof}
 We will proceed with this proof by first finding $G\psi=(u,p)$ when $\psi(t)\in C_0^1(0,T)$. We want to show that there exists $(\varphi_m)_{m \in \mathbb{N}}$ such that 
\[\frac{(G\varphi_m, G\varphi_m)}{\|\varphi_m\|_{L^1(0,T)}^2} =\frac{\|p_m\|^2_{L^2(0,T)}+\|u_m\|^2_{L^2(0,T)}}{\|\varphi_m\|_{L^1(0,T)}^2} \to \infty~~\text{as }m \to \infty.\] 
%However, it is enough to show that 
%\[\frac{\|p_m\|^2}{\|\varphi_m\|_{L^1(0,T)}^2} \to \infty~~\text{as }m \to \infty.\]

Plugging \eqref{pequal} into \eqref{firsteq}-\eqref{1DIC}, we see that $u(x,t)$ needs to satisfy
\begin{align*}
    \frac{\partial^2u}{\partial x\partial t}-kH_v\frac{\partial^4u}{\partial x^3\partial t}-kH_e\frac{\partial^3 u}{\partial x^3}&=0 \quad & \forall (x,t) \in [0,L) \times [0,T]\\
    u(0,t)&=0 \quad &\forall t \in [0,T]\\
    H_v \frac{\partial^2 u}{\partial x\partial t} (0,t)+ H_e \frac{\partial u}{\partial x}(0,t)&=0 \quad &\forall t \in [0,T]\\
    -kH_v \frac{\partial^3 u}{\partial x^2 \partial t}(L,t) -kH_e \frac{\partial^2 u}{\partial x^2}(L,t)&=\psi(t) \quad &\forall t \in [0,T]\\
\frac{\partial u}{\partial x}(x,0)&=0 \quad &\forall x \in [0,L]
\end{align*}
Let $y(x,t)=u(x,t)+\frac{x^2}{2L}\Psi(t)$ where 
\[\Psi(t)=\frac{1}{kH_v}e^{-H_e/H_vt}\int_0^t \psi(\xi) e^{H_e/H_v \xi}d\xi.\]
Then $y(x,t)$ satisfies
\begin{align*}
    \frac{\partial^2y}{\partial x\partial t}-kH_v\frac{\partial^4y}{\partial x^3\partial t}-kH_e\frac{\partial^3 y}{\partial x^3}&=\frac{x}{L}\Psi'(t)\quad & \forall (x,t) \in [0,L] \times [0,T]\\
    y(0,t)&=0 \quad &\forall t \in [0,T]\\
    H_v \frac{\partial^2 y}{\partial x\partial t} (0,t)+ H_e \frac{\partial y}{\partial x}(0,t)&=0 \quad &\forall t \in [0,T]\\
    -kH_v \frac{\partial^3 y}{\partial x^2 \partial t}(L,t) -kH_e \frac{\partial^2 y}{\partial x^2}(L,t)&=0 \quad &\forall t \in [0,T]\\
\frac{\partial y}{\partial x}(x,0)&=0 \quad &\forall x \in [0,L]
\end{align*}
Let $y(x,t)=\sum_{n=1}^\infty f_n(t) \sqrt{2/L}(1-\cos(\lambda_n x))$ where $\lambda_n=\frac{(2n-1)\pi}{2L}$. Notice that $y(0,t)=0$, $\frac{\partial y}{\partial x}(0,t)=0$, and $\frac{\partial^2 y}{\partial x^2}(L,t)=0$. By plugging $y(x,t)$ into the first line of the PDE, we see that
\begin{align*} &\sum_{n=1}^\infty f'_n(t)\lambda_n\sqrt{2/L}\sin(\lambda_n x) + \lambda_n^3 kH_v \sum_{n=1}^\infty f'_n(t) \sqrt{2/L} \sin(\lambda_n x)+\lambda_n^3kH_e \sum_{n=1}^\infty f_n(t)\sqrt{2/L}\sin(\lambda_n x)=\frac{x}{L} \Psi'(t)
\end{align*}
and 
\[\sum_{n=1}^\infty f_n(0)\sqrt{2/L} \sin(\lambda_n x)=0.\]
Note that $\left(\sqrt{2/L}\sin(\lambda_n x)\right)_{n \in \mathbb{N}}$ is a complete orthonormal basis. Hence, multiplying both sides of these equations by $\sqrt{2/L}\sin(\lambda_n x)$, integrating these equations from $0$ to $L$, setting $c_n=(\frac{x}{L}, \sqrt{2/L}\sin(\lambda_n x))_{L^2(0,L)}=4\sqrt{2L}(-1)^{n+1}/((2n-1)^2\pi^2)$ we have
\[(\lambda_n+ \lambda_n^3 k H_v)f_n'(t)+kH_e \lambda_n^3 f_n(t)=c_n\Psi'(t)\quad \text{and} \quad f_n(0)=0.\]
Therefore, $f_n(t)=\frac{1}{\lambda_n+\lambda_n^3kH_v}e^{-kH_e\lambda_n^2\gamma_nt}\int_0^t  e^{(kH_e\lambda_n^2\gamma_n\xi)}c_n\Psi'(\xi) d \xi $ where $\gamma_n=\frac{1}{1+\lambda_n^2 kH_v}$. Hence,
\begin{align*}y(x,t)&=\sum_{n=1}^\infty \frac{1}{\lambda_n+\lambda_n^3kH_v} e^{-kH_e\lambda_n^2\gamma_nt}\int_0^t  e^{kH_e\lambda_n^2\gamma_n\xi}c_n\Psi'(\xi) d \xi \sqrt{2/L}(1-\cos(\lambda_n x))\\
u(x,t)&=\sum_{n=1}^\infty \frac{\gamma_n}{\lambda_n} e^{-kH_e\lambda_n^2\gamma_nt}\int_0^t e^{kH_e\lambda_n^2\gamma_n\xi}c_n\Psi'(\xi) d \xi \sqrt{2/L}(1-\cos(\lambda_n x))-\frac{x^2}{2L}\Psi(t)\end{align*}
and from \eqref{pequal},
\begin{align*}
&p(x,t)= -H_e \frac{x}{L} \Psi(t)-H_v\frac{x}{L}\Psi'(t)+\sum_{n=1}^\infty c_n\lambda_n \sqrt{2/L}\sin(\lambda_n x) \\
 \MoveEqLeft[-1]\left( -kH_eH_v \lambda_n\gamma_n^2e^{-kH_e\gamma_n\lambda_n^2t}\int_0^t e^{kH_e\lambda_n^2\gamma_n \xi}\Psi'(\xi) d \xi+ \frac{H_v\gamma_n}{\lambda_n}\Psi'(t)+\frac{H_e\gamma_n}{\lambda_n}e^{-kH_e\lambda_n^2\gamma_nt}\int_0^t e^{kH_e\lambda_n^2\gamma_n\xi}\Psi'(\xi) d\xi\right)\\
&=-\frac{x}{L}(H_e \Psi(t) +H_v \Psi'(t)) + \sum_{n=1}^\infty c_n \sqrt{2/L} \lambda_n \sin(\lambda_nx) \left(\frac{H_e\gamma_n^2}{\lambda_n}e^{-kH_e\lambda_n^2\gamma_n t}\int_0^t e^{kH_e\lambda_n^2 \gamma_n\xi}\Psi'(\xi) d\xi + \frac{H_v\gamma_n}{\lambda_n} \Psi'(t)\right).
\end{align*}
We notice 
\begin{equation} \label{genPsi'} \Psi'(t)=\frac{1}{kH_v}\left(\frac{-H_e}{H_v}e^{-H_e/H_vt}\int_0^t \psi(\xi) e^{H_e/H_v \xi}d\xi+\psi(t)\right).\end{equation}
Hence,
\begin{align*}
p(x,t)&=-\frac{x}{Lk}\psi(t)+\sum_{n=1}^\infty c_n \sqrt{2/L} \sin(\lambda_nx) \left(H_e\gamma_n^2e^{-kH_e\lambda_n^2\gamma_n t}\int_0^t e^{kH_e\lambda_n^2\gamma_n \xi}\Psi'(\xi) d\xi 
+ H_v\gamma_n \Psi'(t)\right).
\end{align*}
Therefore, 
\begin{align*}&\|p\|_{L^2(0,T;L^2(\Omega))}^2 =\int_0^T \sum_{n=1}^\infty c_n^2 \left(H_e\gamma_n^2e^{-kH_e\lambda_n^2\gamma_n t}\int_0^t e^{kH_e\lambda_n^2 \gamma_n \xi}\Psi'(\xi) d\xi 
+ H_v\gamma_n\Psi'(t)\right)^2 \\
 \MoveEqLeft[-1]-2\frac{c_n^2}{k}\psi(t)\left(H_e\gamma_n^2e^{-kH_e\lambda_n^2 \gamma_n t}\int_0^t e^{kH_e\lambda_n^2 \gamma_n \xi} \Psi'(\xi) d \xi + H_v \gamma_n\Psi'(t)\right) + \frac{c_n^2}{k^2}\psi^2(t)dt. 
\end{align*}
Recalling \eqref{genPsi'}, we see that when $\psi=\varphi_m=e^{mt}$, we have 
\begin{equation}\begin{split} \label{Psi'}
    \Psi'(t)&=\frac{1}{kH_v}\left(-\frac{H_e}{H_v} e^{-H_e/H_vt}\int_0^t e^{m\xi}e^{H_e/H_v\xi}d\xi +e^{mt}\right)\\
    &=\frac{1}{kH_v}\left(-\frac{H_e}{H_v}\left(\frac{e^{mt}}{m+H_e/H_v}-\frac{e^{-H_e/H_v t}}{m+H_e/H_v}\right)+ e^{mt}\right)\\
    &=\frac{1}{kH_v}\left(\frac{H_e e^{-H_e/H_vt}}{mH_v+H_e}-\frac{H_e e^{mt}}{mH_v  + H_e}+e^{mt} \right)\\
    &=\frac{1}{kH_v} \left(\frac{H_e e^{-H_e/H_vt}}{mH_v+H_e}+\frac{H_vm e^{mt}}{H_v m + H_e}\right) \geq 0
\end{split} \end{equation}
Hence,
\begin{align*}
   &e^{-kH_e \lambda_n^2\gamma_n t}\int_0^te^{kH_e \lambda_n^2\gamma_n \xi}\Psi'(\xi)d \xi\\
   &=\frac{1}{kH_v(mH_v+ H_e)}\left(\frac{H_e}{kH_e \lambda_n^2\gamma_n-H_e/H_v}\left(e^{-H_e/H_vt}-e^{-kH_e \lambda_n^2\gamma_n t}\right) + \frac{H_vm}{kH_e\lambda_n^2\gamma_n +m}\left(e^{mt}-e^{-kH_e\lambda_n^2 \gamma_n t} \right)\right)\\
   &=\frac{1}{kH_v(mH_v+ H_e)}\left(\frac{H_v}{k \lambda_n^2\gamma_nH_v-1}\left(e^{-H_e/H_vt}-e^{-kH_e \lambda_n^2\gamma_n t}\right) + \frac{H_vm}{kH_e\lambda_n^2\gamma_n +m}\left(e^{mt}-e^{-kH_e\lambda_n^2 \gamma_n t} \right)\right)\\
\end{align*}
Recall
$\gamma_n=\frac{1}{1+\lambda_n^2k H_v}$, so
\begin{align*}
   k\lambda_n^2 \gamma_n H_v-1&=\frac{k\lambda_n^2H_v}{1+ \lambda_n^2kH_v}-\frac{1+\lambda_n^2kH_v}{1+\lambda_n^2kH_v}\\
   &=\frac{-1}{1+\lambda_n^2kH_v}=-\gamma_n
   \end{align*}
and 
\[-kH_e \lambda_n^2 \gamma_n=-\frac{kH_e \lambda_n}{1+\lambda_n^2 kH_v}>-\frac{kH_e\lambda_n^2}{\lambda_n^2kH_v}=-\frac{H_e}{H_v},\] which implies
$1
\geq e^{-kH_e\lambda_n^2\gamma_n t}-e^{-H_e/H_vt}\geq 0$ for $t \in [0,T]$.
Hence,
\begin{equation} \label{ekintek} e^{-kH_e\lambda_n^2\gamma_nt}\int_0^t e^{kH_e\lambda_n^2\gamma_n \xi}\Psi'(\xi) d \xi\geq \frac{1}{kH_v(mH_v+ H_e)}\left(\frac{H_v}{\gamma_n} + \frac{H_vm}{kH_e\lambda_n^2\gamma_n +m}\left(e^{mt}-e^{-kH_e\lambda_n^2 \gamma_n t} \right)\right)\end{equation}
Let $l>0$ satisfy
\begin{equation} \label{gammal}\lambda_l^2=\frac{(2l-1)^2\pi^2}{4L^2}> \frac{8}{k H_v}.~~\text{Then }\gamma_l=\frac{1}{1+ \lambda_l^2kH_v}<\frac{1}{\lambda_l^2kH_v} <\frac{k}{8}.\end{equation}
%Notice that since $\gamma_n=\frac{1}{1+\lambda_n^2k H_v}$
%\begin{align*}
%   k\lambda_n^2 \gamma_n H_v-1&=\frac{k\lambda_n^2H_v}{1+ \lambda_n^2kH_v}-\frac{1+\lambda_n^2kH_v}{1+\lambda_n^2kH_v}\\
%   &=\frac{-1}{1+\lambda_n^2kH_v}=-\gamma_n
%\end{align*}
%We see that 
%\begin{align*}
%    &e^{-kH_e \lambda_n^2\gamma_n t}\int_0^te^{kH_e \lambda_n^2\gamma_n \xi}\Psi'(\xi)d \xi\\
%    &=\frac{1}{kH_ve^{mT}(mH_v+H_e)}\left(\frac{H_v}{\gamma_n}\left(e^{-kH_e\lambda_n^2 \gamma_n t}-e^{-H_e/H_vt}\right)+ \frac{H_v}{kH_e\lambda_n^2 \gamma_n +m}\left(e^{mt}-e^{-kH_e\lambda_n^2 \gamma_n t}\right)\right)
%\end{align*}
%Notice $e^{mt}-e^{-kH_e\lambda_n^2 \gamma_n t}\geq 0$ for all $t\in [0,T]$. Since 
%\[-kH_e \lambda_n^2 \gamma_n=-\frac{kH_e \lambda_n}{1+\lambda_n^2 kH_v}>-\frac{kH_e\lambda_n^2}{\lambda_n^2kH_v}=-\frac{H_e}{H_v},\]
%$e^{-kH_e\lambda_n^2\gamma_n t}-e^{-H_e/H_vt}\geq 0$ for $t \in [0,T]$. We conclude $e^{-kH_e\lambda_n^2 \gamma_n t} \int_0^t e^{kH_e \lambda_n^2\gamma_n \xi}\Psi'(\xi) \geq 0$ for all $t \in [0,T]$. Hence,
%\begin{align*}
%    &\|p_m\|_{L^2(0,T;L^2(\Omega))}^2 \geq \int_0^T \sum_{n=1}^\infty c_n^2\frac{H_v^2}{(\lambda_n + \lambda_n^3kH_v)^2(kH_ve^{mT})^2}\left(\frac{H_e e^{-H_e/H_vt}}{mH_v+H_e}+\frac{H_vm e^{mt}}{mH_v  + H_e} \right)^2\\
%    &\geq \int_0^T c_1^2 \frac{H_v^4 m^2 e^{2mt}}{(\lambda_n+ \lambda_n^3kH_v)^2k^2H_v^2(mH_v+H_e)^2} dt\\
%    &\geq \int_0^T c_1^2 \frac{H_v^4e^{2mt}}{(\lambda_1+ \lambda_1^3kH_v)^2k^2 H_v^2(H_v+H_e)^2}\\
%    &=\frac{\alpha(e^{2mT}-1)}{2m}
%\end{align*}
Notice that 
    \begin{equation} \nonumber \begin{split}\|p_m\|_{L^2(0,T;L^2(\Omega))}^2&\geq \int_0^T c_l^2 \left(H_e\gamma_l^2e^{-kH_e\lambda_l^2\gamma_l t}\int_0^t e^{kH_e\lambda_l^2 \gamma_l \xi}\Psi'(\xi) d\xi 
+ H_v\gamma_l\Psi'(t)\right)^2 \\
 \MoveEqLeft[-1]-2\frac{c_l^2}{k}\psi(t)\left(H_e\gamma_l^2e^{-kH_e\lambda_l^2 \gamma_l t}\int_0^t e^{kH_e\lambda_l^2 \gamma_l \xi} \Psi'(\xi) d \xi + H_v \gamma_l\Psi'(t)\right) + \frac{c_l^2}{k^2}\psi^2(t)dt \end{split} \end{equation}
Dropping the first term since it is positive and applying \eqref{ekintek} and \eqref{Psi'} we see
\begin{equation}
    \nonumber
    \begin{split}
        \|p_m\|_{L^2(0,T;L^2(\Omega))}^2&\geq \int_0^T-2\frac{c_l^2}{k} e^{mt}\frac{H_e \gamma_l^2}{kH_v(mH_v+H_e)}\left(\frac{H_v}{\gamma_l} + \frac{H_vm}{kH_e \lambda_l^2 \gamma_l+m}\left(e^{mt}-e^{-kH_e \lambda_l^2 \gamma_l t}\right) \right)\\
         \MoveEqLeft[-1] -2\frac{c_l^2}{k} e^{mt}\frac{\gamma_l}{k}\left(\frac{H_e e^{-H_e/H_v t}}{mH_v+H_e}+\frac{H_v me^{mt}}{H_v m + H_e}\right)+ \frac{c_l^2}{k^2} e^{2mt}dt
    \end{split}
\end{equation}
Recalling that $-(e^{mt}-e^{-kH_e \lambda_l^2 \gamma_l t})\geq -e^{mt}$, $-\left(\frac{H_e e^{-H_e/H_v t}}{mH_v+H+e}+\frac{H_v m e^{mt}}{H_vm+H_e}\right) \geq -(1+ e^{mt})$, and $mH_v+H_e \geq H_e$ we see
\begin{equation} \nonumber \begin{split} \|p_m\|_{L^2(0,T;L^2(\Omega))}^2 &\geq \int_0^T-2\frac{c_l^2}{k} e^{mt}\frac{H_e \gamma_l^2}{kH_v(mH_v+H_e)} \left(\frac{H_v}{\gamma_l}+\frac{H_vm}{ m}e^{mt}\right)-2\frac{c_l^2}{k}e^{mt}\frac{\gamma_l}{k}(1+ e^{mt})+ \frac{c_l^2}{k^2} e^{2mt}dt\\
&\geq \int_0^T-2\frac{c_l^2}{k} e^{mt}\left(\frac{H_e \gamma_l^2}{kH_vH_e} \frac{H_v}{\gamma_l}+\frac{H_e \gamma_l^2H_v}{kH_vH_e}e^{mt}\right)-2\frac{c_l^2}{k}e^{mt}\frac{\gamma_l}{k}(1+ e^{mt})+ \frac{c_l^2}{k^2} e^{2mt}dt\\
&\geq \int_0^T-2\frac{c_l^2}{k^2} e^{2mt} \left(\gamma_l+\gamma_l^2\right)-2\frac{c_l^2}{k^2} e^{2mt}2\gamma_l + \frac{c_l^2}{k^2} e^{2mt} dt\end{split}\end{equation}
Hence, applying \eqref{gammal}, we see
\begin{equation} \|p_m\|_{L^2(0,T;L^2(\Omega))}^2\geq \int_0^T-2\frac{c_l^2}{k^2} e^{2mt}\left(\frac{1}{8}+\frac{1}{64}+\frac{1}{4}\right)+\frac{c_l^2}{k^2} e^{2mt}dt\geq \int_0^T \frac{7c_l^2}{32^2}e^{2mt}dt=\frac{7c_l^2}{64k^2m}\left(e^{2mT}-1\right). \end{equation}
Also, note $\|\varphi_m\|_{L^1(0,T)}^2= \frac{(e^{mt}-1)^2}{m^2}$. Therefore,
\begin{align*}
\frac{\nabla ^2 j(w)(\varphi_m, \varphi_m)}{\|\varphi_m\|_{L^1(0,T)}^2}&=\frac{(G\varphi_m, G\varphi_m)}{\|\varphi_m\|_{L^1(0,T)}^2} =\frac{\|p_m\|^2_{L^2(0,T;L^2(0,L))}+\|u_m\|^2_{L^2(0,T;L^2(0,L))}}{\|\varphi_m\|_{L^1(0,T)}^2} \geq \frac{\|p_m\|^2_{L^2(0,T;L^2(0,L))}}{\|\varphi_m\|_{L^1(0,T)}^2}\\
&\geq \frac{\frac{7c_l}{64mk^2}(e^{2mT}-1)m^2}{(e^{mT}-1)^2} \to \infty ~~\text{as }m \to \infty.
\end{align*}
Therefore, \Cref{ass:strong_assumption} is not satisfied.
\end{proof}
We now consider the poroviscoelastic case where the control is used as the time component of the source $S$, and show \Cref{ass:strong_assumption} is not satisfied.
\begin{theorem}
Let $\lambda=0$ and $\delta>0$. Let $k(x,t)=k$ and $\chi(x)\in L^2(0,L)$. Define $G:L^2(0,T)\to L^2(0,T;L^2(0,L) \times L^2(0,L))$ to be the map that maps $s(t)\in L^2(0,T)$ to the $(u,p)$ that satisfies \eqref{firsteq}-\eqref{1DIC} with all sources set to 0 except $S(x,t)=s(t)\chi(x)$. Then \Cref{ass:strong_assumption} does not hold. \end{theorem}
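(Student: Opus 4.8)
The plan is to follow the pattern of the proof of the previous theorem, exhibiting a sequence $(\varphi_m)_{m \in \N} \subset L^2(0,T)$ for which the quotient $\nabla^2 j(w)(\varphi_m, \varphi_m)/\|\varphi_m\|_{L^1(0,T)}^2$ is unbounded; by \eqref{hessian} with $\lambda = 0$ this quotient equals $(G\varphi_m, G\varphi_m)/\|\varphi_m\|_{L^1(0,T)}^2 = (\|u_m\|_{L^2(0,T;L^2(0,L))}^2 + \|p_m\|_{L^2(0,T;L^2(0,L))}^2)/\|\varphi_m\|_{L^1(0,T)}^2$, where $(u_m,p_m) = G\varphi_m$, so it suffices to make the $p$-part blow up. We may assume $\chi \neq 0$ (otherwise $G \equiv 0$ and there is nothing to prove) and fix $l \in \N$ with $d_l \coloneqq (\chi, \sqrt{2/L}\sin(\lambda_l x))_{L^2(0,L)} \neq 0$, where $\lambda_n \coloneqq (2n-1)\pi/(2L)$; such an $l$ exists since $(\sqrt{2/L}\sin(\lambda_n x))_{n\in\N}$ is a complete orthonormal basis of $L^2(0,L)$. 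The computation is in fact simpler than in the preceding theorem: because the control now enters the right-hand side of the mass balance directly instead of through the Neumann flux at $x = L$, the boundary and initial conditions stay homogeneous and there is no auxiliary ODE smoothing of the control (no analogue of $\Psi$); the undamped "instantaneous" part of $p$ is already proportional to $s$ itself.

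Concretely, I would substitute \eqref{pequal} into \eqref{firsteq}--\eqref{1DIC} with all sources except $S = s(t)\chi(x)$ set to zero, obtaining for smooth $s$ a PDE for $u$ with homogeneous boundary and initial conditions. Expanding $u(x,t) = \sum_{n=1}^\infty f_n(t)\sqrt{2/L}(1 - \cos(\lambda_n x))$ and $\chi(x) = \sum_{n=1}^\infty d_n \sqrt{2/L}\sin(\lambda_n x)$ and testing against $\sqrt{2/L}\sin(\lambda_n x)$ decouples the system into the ODEs $f_n'(t) + kH_e\lambda_n^2\gamma_n f_n(t) = \gamma_n d_n s(t)/\lambda_n$ with $f_n(0) = 0$, where $\gamma_n \coloneqq (1 + kH_v\lambda_n^2)^{-1}$, so that $f_n(t) = (\gamma_n d_n/\lambda_n)\, e^{-kH_e\lambda_n^2\gamma_n t}\int_0^t e^{kH_e\lambda_n^2\gamma_n \tau} s(\tau)\dd\tau$. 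Inserting this into $p = H_e u_x + H_v u_{xt}$ and using the ODE together with the identity $1 - kH_v\lambda_n^2\gamma_n = \gamma_n$ gives
\[
p(x,t) = \sum_{n=1}^\infty d_n\gamma_n\left(H_e\gamma_n\, e^{-kH_e\lambda_n^2\gamma_n t}\int_0^t e^{kH_e\lambda_n^2\gamma_n\tau} s(\tau)\dd\tau + H_v\, s(t)\right)\sqrt{2/L}\sin(\lambda_n x),
\]
in which the second summand carries no decay in $n$.

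Choosing $\varphi_m(t) \coloneqq e^{mt}$, using Parseval in $x$, and retaining only the mode $n = l$ (all summands being non-negative), I would get $\|p_m\|_{L^2(0,T;L^2(0,L))}^2 \ge \int_0^T d_l^2\gamma_l^2\bigl( H_e\gamma_l (e^{mt} - e^{-kH_e\lambda_l^2\gamma_l t})/(kH_e\lambda_l^2\gamma_l + m) + H_v e^{mt}\bigr)^2 \dd t$. For $t \ge 0$ and $m > 0$ both terms inside the parenthesis are non-negative (using $e^{mt} \ge 1 \ge e^{-kH_e\lambda_l^2\gamma_l t}$), so keeping only the instantaneous term yields $\|p_m\|^2 \ge d_l^2\gamma_l^2 H_v^2 (e^{2mT} - 1)/(2m)$. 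With $\|\varphi_m\|_{L^1(0,T)}^2 = (e^{mT} - 1)^2/m^2$ and $(G\varphi_m, G\varphi_m) \ge \|p_m\|^2$ this gives
\[
\frac{\nabla^2 j(w)(\varphi_m, \varphi_m)}{\|\varphi_m\|_{L^1(0,T)}^2} \ge \frac{d_l^2\gamma_l^2 H_v^2}{2}\cdot\frac{m(e^{mT} + 1)}{e^{mT} - 1} \to \infty \quad\text{as } m \to \infty,
\]
contradicting the uniform bound required in \Cref{ass:strong_assumption} (note $\nabla^2 j(w)$ is independent of $w$ here). I expect the only genuinely delicate points to be bookkeeping ones: checking that the $(1 - \cos(\lambda_n x))$-expansion enforces every homogeneous boundary condition inherited after the substitution \eqref{pequal} (the mixed condition at $x = 0$ from $p(0,t) = 0$, and the second-order condition at $x = L$ from $-kp_x(L,t) = \psi(t) = 0$), and justifying that the closed-form series for $p$ derived for $s \in C_0^1(0,T)$ remains valid at $s = \varphi_m = e^{mt}$, which follows exactly as in the two preceding proofs from boundedness of $G$ and continuity of the series representation in the relevant norms. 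In contrast to the poroviscoelastic boundary-control case, no selection of a large mode as in \eqref{gammal} is needed here, because the undamped term already makes $\|p_m\|^2$ grow like $\|\varphi_m\|_{L^2(0,T)}^2$ rather than $\|\varphi_m\|_{L^1(0,T)}^2$.
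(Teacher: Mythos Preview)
Your proof is correct and follows essentially the same approach as the paper's: derive the Fourier representation of $p$, observe that the undamped term $H_v\gamma_n s(t)$ forces $\|p\|_{L^2}^2 \ge C\|s\|_{L^2(0,T)}^2$ for nonnegative $s$, and then exhibit a sequence with $\|\varphi_m\|_{L^2}^2/\|\varphi_m\|_{L^1}^2 \to \infty$. The only cosmetic differences are that the paper states the intermediate inequality $\|p\|_{L^2(0,T;L^2(\Omega))}^2 \ge C\|s\|_{L^2(0,T)}^2$ for general nonnegative $s$ (summing over all modes rather than fixing one $l$) and then reuses the sequence $\varphi_m(t) = m(t/T)^m$ from \Cref{prp:strong_assumption_not_satisfied}, whereas you carry the explicit computation through with $\varphi_m(t) = e^{mt}$ and a single mode; your closing remark that no large-mode selection analogous to \eqref{gammal} is needed here is exactly right.
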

\begin{proof}
We will proceed with this proof by first computing $Gs=(u,p)$ and then showing a lower estimate on $\|p\|_{L^2(0,T;L^2(\Omega))}^2$.
Plugging \eqref{pequal} into the PDE we see $u(x,t)$ needs to satisfy
\begin{align*}
    \frac{\partial^2u}{\partial x\partial t}-kH_v\frac{\partial^4u}{\partial x^3\partial t}-kH_e\frac{\partial^3 u}{\partial x^3}&=S \quad & \forall (x,t) \in [0,L) \times [0,T]\\
    u(0,t)&=0 \quad &\forall t \in [0,T]\\
    H_v \frac{\partial^2 u}{\partial x\partial t} (0,t)+ H_e \frac{\partial u}{\partial x}(0,t)&=0 \quad &\forall t \in [0,T]\\
    -kH_v \frac{\partial^3 u}{\partial x^2 \partial t}(L,t) -kH_e \frac{\partial^2 u}{\partial x^2}(L,t)&=0 \quad &\forall t \in [0,T]\\
\frac{\partial u}{\partial x}(x,0)&=0 \quad &\forall x \in [0,L]
\end{align*}
Let $u(x,t)=\sum_{n=1}^\infty f_n(t) \sqrt{2/L}(1-\cos(\lambda_n x))$, where $\lambda_n=\frac{(2n-1)\pi}{2L}$. Notice that $u(0,t)=0$, $\frac{\partial u}{\partial x}(0,t)=0$, and $\frac{\partial^2 u}{\partial x^2}(L,t)=0$, so the boundary conditions are satisfied. We have
\[\sum_{n=1}^\infty f'_n(t)\lambda_n\sqrt{2/L}\sin(\lambda_n x) + \lambda_n^3 kH_v \sum_{n=1}^\infty f'_n(t) \sqrt{2/L} \sin(\lambda_n x)+\lambda_n^3kH_e \sum_{n=1}^\infty f_n(t)\sqrt{2/L}\sin(\lambda_n x) =s(t)\chi(x)\]
and 
\[\sum_{n=1}^\infty f_n(0)\sqrt{2/L} \sin(\lambda_n x)=0.\]
Note $\left(\sqrt{2/L}\sin(\lambda_n x)\right)_{n \in \mathbb{N}}$ is a complete orthonormal basis. Hence, multiplying both sides of these equations by $\sqrt{2/L}\sin(\lambda_n x)$, integrating these equations from $0$ to $L$, and setting $c_n=(\chi(x), \sqrt{2/L}\sin(\lambda_n x))_{L^2(0,L)}$, we have for all $n \in \mathbb{N}$
\[(\lambda_n+ \lambda_n^3 k H_v)f_n'(t)+kH_e \lambda_n^3 f_n(t)=c_n s(t)\quad \text{and} \quad f_n(0)=0.\]
Therefore, $f_n(t)=\frac{1}{\lambda_n+\lambda_n^3kH_v}e^{-kH_e\lambda_n^2t/(1+\lambda_n^2 kH_v)}\int_0^t  c_n e^{kH_e\lambda_n^2\xi/(1+\lambda_n^2 k H_v)}s(\xi) d \xi $. Hence,
\[u(x,t)=\sum_{n=1}^\infty \frac{c_n}{\lambda_n+\lambda_n^3kH_v} e^{-kH_e\lambda_n^2t/(1+\lambda_n^2 kH_v)}\int_0^t  e^{kH_e\lambda_n^2\xi/(1+\lambda_n^2 k H_v)}s(\xi) d \xi \sqrt{2/L}(1-\cos(\lambda_n x))\]
and \eqref{pequal} gives
\begin{align*}
p(x,t)&=\sum_{n=1}^\infty c_n\sqrt{2/L}\sin(\lambda_n x) \left( \frac{-kH_eH_v \lambda_n}{(1 + \lambda_n^2kH_v)^2}e^{-kH_e\lambda_n^2t/(1+\lambda_n^2 kH_v)}\int_0^t e^{kH_e\lambda_n^2\xi/(1+\lambda_n^2 k H_v)}s(\xi) d \xi \right.\\
 \MoveEqLeft[-1]\left.+ \frac{H_v}{\lambda_n+ \lambda_n^3kH_v}s(t)+\frac{H_e}{\lambda_n+ \lambda_n^3 kH_v}e^{-kH_e \lambda_n^2t/(1+\lambda_n^2 kH_v)}\int_0^t e^{kH_e \lambda_n^2 \xi/(1+ \lambda_n^2 k H_v)} s(\xi) d\xi\right)\\
&=\sum_{n=1}^\infty c_n \sqrt{2/L}\sin(\lambda_n x) \left(\left(\frac{-kH_eH_v\lambda_n^2}{\lambda_n(1+ \lambda_n^2kH_v)^2}+\frac{H_e(1+ \lambda_n^2k H_v)}{\lambda_n(1+ \lambda_n^2kH_v)^2}  \right)e^{-kH_e\lambda_n^2t/(1+ \lambda_n^2kH_v)}\right.\\
 \MoveEqLeft[-1]\left.\int_0^t e^{kH_e\lambda_n^2\xi/(1+ \lambda_n^2 k H_v)} s(\xi) d \xi + \frac{H_v}{\lambda_n + \lambda_n^3kH_v}s(t)\right)\\
&=\sum_{n=1}^\infty c_n \sqrt{2/L}\sin(\lambda_n x) \left(\frac{H_e}{\lambda_n(1+\lambda_n^2 kH_v)^2}e^{-kH_e\lambda_n^2t/(1+ \lambda_n^2kH_v)}\int_0^t e^{kH_e\lambda_n^2\xi/(1+ \lambda_n^2 k H_v)} s(\xi) d \xi \right.\\
 \MoveEqLeft[-1]\left.+ \frac{H_v}{\lambda_n + \lambda_n^3kH_v}s(t)\right).\\
\end{align*}
Therefore, when $s(t)$ is strictly non-negative,
\[\|p\|_{L^2(0,T;L^2(\Omega))}^2 \geq \sum_{n=1}^\infty \frac{c_n^2 H_v^2}{(\lambda_n+ \lambda_n^3kH_v)^2} \|s\|_{L^2(0,T)}^2\geq C \|s\|^2_{L^2(0,T)}\]
for some $C>0$.
Hence, using $\varphi_m=m\left(\frac{t}{T}\right)^m$ (as was done in the proof of \Cref{prp:strong_assumption_not_satisfied}), we have
\begin{align*}
\frac{\nabla ^2 j(w)(\varphi_m, \varphi_m)}{\|\varphi_m\|_{L^1(0,T)}^2}=\frac{(G\varphi_m,G \varphi_m)}{\|\varphi_m\|_{L^1(0,T)}^2} \geq \frac{\|p_m\|_{L^2(0,T;L^2(\Omega))}}{\|\varphi_m\|_{L^1(0,T)}^2} \geq \frac{C\|\varphi_m\|_{L^2(0,T)}}{\|\varphi_m\|_{L^1(0,T)}^2} \to \infty~~\text{as } m \to \infty.
\end{align*}
Therefore, \Cref{ass:strong_assumption} is not satisfied. 
\end{proof}

\section{Computational Experiments}\label{sec:comp}
For our computational experiments, we use an instance of the one-dimensional
porous medium equations described in \Cref{sec:biots_model}.
We intend to analyze the effect of the mollification regularization
on the resulting instationarity and the objective values in practice.
Specifically, we consider the control input $\psi$, where 
\Cref{ass:strong_assumption} is satisfied for the poroelastic case
and violated for the poroviscoelastic case.
In \Cref{sec:setup}, the specific model as well as the discretization of the model and the 
trust-region subproblems are described as well as the details for the computational experiements and the homotopy.
We present and describe our results in 
\Cref{sec:results}.

\subsection{Numerical Experiments}\label{sec:setup}
We consider a discretized instance of \eqref{eq:p} that is governed by the PDE
introduced in \Cref{sec:biots_model} with a one-dimensional spatial domain
$\Omega = (0,2)$ and a one-dimensional time domain $(0,T) = (0,0.5)$.
We use the same discretization as in \cite{BGSW}. For the time horizon we use N=512
uniform intervals and an implicit Euler scheme. For the space discretization, we use a dual hybridized finite element method with 512 uniform intervals on each of which the control is constant with a
value in $W$. We choose the set of possible control realizations as $W=\{-7, -5, -3, -1, 0, 2\}$ in our computations.

Regarding the parameters of the PDE we use $k = 1$,
$\lambda_e = 1$, $\mu_e = 1$, $\mu_v = 0.25$,
$\lambda_v = 0.0774$. We execute the same experiments
for both the two choices $\delta = 0$ (poroelastic case)
and $\delta = 1$ (poroviscoelastic case).

Regarding the setup of the control problem, we choose the
structure given in \eqref{eq:control_problem}
where the PDE input choices are $\psi = w$ ($F = 0$, $S = 0$, $g = 0$) 
and $S = \chi w$ for a fixed function $\chi$ ($F = 0$, $g = 0$,
$\psi = 0$), where $w$ denotes the control function.
We choose the
penalty parameter value $\alpha = 5\cdot 10^{-5}$ to scale the
$\TV$-term in the objective. We run all
experiments for the choices $\lambda = 10^{-4}$,
$\lambda = 10^{-2}$, and $\lambda = 0$. We tabulate for
which of the settings \Cref{ass:strong_assumption} is violated or 
satisfied in \Cref{tbl:ass} according to the results obtained in 
\Cref{sec:biots_model}.
\begin{table}[h]
    \centering
    \caption{Satisfaction (T) and violation (F) of \Cref{ass:strong_assumption} for the
    different experiments.}
    \label{tbl:ass}
    \begin{tabular}{r|cccccc}
    \toprule
    &
    & Input is $S$ &
    & 
    & Input is $\psi$ &\\
    $\lambda = $ & $0$ & $10^{-4}$ & $10^{-2}$ & $0$ & $10^{-4}$ & $10^{-2}$ \\
    \midrule
    $\delta = 0$ & T & F & F & T & F & F \\
    $\delta = 1$ & F & F & F & F & F & F\\
    \bottomrule
    \end{tabular}
\end{table} We note that \Cref{ass:strong_assumption}
is not satisfied for both $\delta = 0$ and $\delta = 1$ 
for $\lambda = 10^{-2}$ and $\lambda = 10^{-4}$
by virtue of  \Cref{prp:strong_assumption_not_satisfied} regardless of the fact
which of the control inputs is chosen. Additionally, since \eqref{hessian} shows Assumption \ref{ass:weak_assumption} is satisfied in all cases, we note that Proposition \ref{prp:relax_assumption} implies Assumption \ref{ass:strong_assumption} is satisfied for $j_\epsilon$ in all the settings.

For the tracking terms, we choose 
$u_d(x,t) = 0.5 + (1 - t)^2\cos(50t)(-1.975x + 4)$
and $p_d(x,t) = 0.5 + \cos(50t)^2$ for all $(x,t) \in \Omega\times (0,T)$.
%We use a one-dimensional spatial domain that is discretized as
%in \cite{BGSW} into $512$ intervals. We discretize the PDE following
%the \cite{BGSW}. We choose $T = 0.5$ and discretize the time
%horizon uniformly into $N = 512$ intervals. $N = 512$ intervals are used
%for both the implicit Euler time-stepping scheme and
%and the discretization of the controls with interval-wise
%constant functions that assume the values in $W$ on the intervals.
The tracking-type term and the squared $L^2$-norm of the control,
are discretized using the trapezoidal rule for the same intervals.
The derivative of the of the first (reduced) term of the objective is
required to evaluate the linear part of the objective of the 
subproblem \eqref{eq:tr}. In order to compute the latter, we use a
first discretize, then optimize-based \cite{hinze2008optimization}
adjoint calculus.

In our executions of \Cref{alg:slip} on a computer, we select six feasible initial controls $w^0$, specifically
$w^0 \equiv \omega$ for all $\omega \in W$. Then, we replace the
infinite-dimensional trust-region subproblems with the discretizations
that are described above. We note that we have the (implicit) termination
criterion in \Cref{alg:slip} that the trust-region radius contracts to a
value below $T / N$ because we operate with limited precision and a
fixed discretization. In this case, the linear integer program that
arises after discretizing \eqref{eq:tr} has only one feasible point,
namely the function $d = 0$ with objective value $0$. Thus we always
run \Cref{alg:slip} until this situation occurs. The reset trust-region
radius is $\Delta^0 = 0.25 T$. The acceptance value for the ratio of
actual over predicted reduction is $\sigma = 10^{-3}$.

\Cref{alg:slip} is implemented in MATLAB. C++ is used for
	the subproblem solver implementation, which follows
	\cite{severitt2022efficient}. All computations were executed on a
	workstation with an AMD Epic 7742 CPU and 96 GB RAM.

For each of the 6 initializations $w^0$, we record the final control $w^f$,
the final objective value $j(w^f) + \alpha \TV(w^f)$, and the instationarity $C(w^f)$
on termination for all of these executions. Here,
$C(w^f) \coloneqq \|(\nabla j(w^f)(t_i))_{i=1}^{\# s}\|$,
where $t_1$, $\ldots$, $t_{\#s}$ for some $\#s \in \N$
denote the switching times of $w^f$, that is the
values $\hat{t}\in (0,T)$ such that
$\lim_{t \downarrow \hat{t}} w^f(t) \neq \lim_{t \uparrow \hat{t}} w^f(t)$.
Note again that the stationarity condition from \cref{dfn:l_stationary} becomes
$\nabla j(w^f)(t_i) = 0$ for all such switching times $t_1$, $\ldots$, $t_{\#s}$
if $\nabla j(w^f)$ is a continuous function, which is ensured by the regularity
of the solution of the adjoint equation. Consequently, the instationarity is the norm of
the vector of the $\#s$ individual violations of this instationarity condition.

Then we regularize $j$ by composing it with the application of a standard mollifier to the control
input following our recipe in \Cref{sec:input_regularization}, that is we replace $j$ by
$j \circ K_\varepsilon$ in \eqref{eq:p} and $\nabla j$ by $K_\varepsilon^*(\nabla j)\circ K_\varepsilon$
in \eqref{eq:tr}. For each of the six initial controls $w^0$ we execute a homotopy of $\varepsilon$
and \Cref{alg:slip} on the regularized problems with the following regularization parameter values
$$\varepsilon \in \{ \num{1.6e-2}, \num{8e-3}, \num{4e-3}, \num{2e-3}, \num{1e-3}, \num{5e-4}, \num{2.5e-4}, 0 \}.$$
We initialize \Cref{alg:slip} with $w^0$ for the largest regularization parameter value
$\varepsilon = \num{1.6e-2}$ and initialize the execution of \Cref{alg:slip} 
for the subsequent value of $\varepsilon$ with the final control function iterate of the
previous parameter value for $\varepsilon$. Again, we record the final controls, objective values,
and instationarities on termination.

For $\lambda > 0$, L-stationarity is not known to be a necessary optimality condition. Moreover, the
fixed discretization also implies that we cannot expect that
the final iterate is perfectly L-stationary even if 
\Cref{ass:strong_assumption} is satisfied. 
In order to provide a full picture, we have chosen to still
measure remaining instationarity for the final control iterates
and report the final instationarities for the unregularized
optimization and the homotopy but kindly ask the reader to
take these values with caution. We assess the remaining
instationarity of the final control iterate $w^f$ by 
evaluating
$C(w^f)$.

\subsection{Results}\label{sec:results}
The results achieved with the unregularized optimization differ significantly from those obtained
with the homotopy. We first note that for the same initialization, the 
run at the end of the homotopy with $\varepsilon = 0$ and a plain run of Algorithm \ref{alg:slip}
with $\varepsilon = 0$ with the initial guess from the beginning of the homotopy
produce different final iterates (approximately L-stationary points). Within
the homotopy, the initial guess for an execution of Algorithm \ref{alg:slip} is the final iterate of
the previous execution of Algorithm \ref{alg:slip} with a larger value of $\varepsilon$. Consequently,
since the initial guesses of the two runs with $\varepsilon = 0$ are different and the problem
is nonconvex, they can lead to different sequences of iterates that converge to different L-stationary
points.

\paragraph{Input choice $\psi$.}
We report the details of the numerical results when $\psi$ is used as the input choice in \Cref{tab:delta_0_first_experiment} for
the poroelastic case $\delta = 0$ and in \Cref{tab:first_experiment}
for the poroviscoelastic case. Detailed iteration numbers
over the different values of the homotopy are given in 
\Cref{tab:delta_0_iterations} for $\delta = 0$
and \Cref{tab:iterations} for $\delta = 1$.

\begin{table}[t]
    \centering
    \caption{Control function $\psi$, poroelastic case $\delta = 0$: Objective, instationarity, and (cumulative) outer iterations
    of \eqref{eq:p} for six initial controls achieved by
    executing \Cref{alg:slip} on the discretized problem as well
    as by executing a homotopy, abbreviated (H),
    of executions of \Cref{alg:slip}
    on regularized problems with regularization parameter driven to
    zero for the same initial controls.}
    \label{tab:delta_0_first_experiment}
    \begin{tabular}{lr|llllll}
    \toprule
    & Ini.
    & Final Obj.
    & Final Obj. (H)
    & Final Inst.
    & Final Inst. (H) 
    & Iter.
    & Cum.\ Iter.\ (H)
    \\
\midrule
$\lambda = 0$ &
1   &   \num{1.3474}   &   \num{1.3474}   &   \num{3.4e-06}   &   \num{2.2e-06}   &   35   &   43 \\
$\lambda = 10^{-4}$&
1   &   \num{1.3476}   &   \num{1.3476}   &   \num{3.0e-06}   &   \num{2.8e-06}   &   35   &   42 \\
$\lambda = 10^{-2}$&
1   &   \num{1.3616}   &   \num{1.3616}   &   \num{3.5e-06}   &   \num{2.2e-06}   &   31   &   47 \\
\midrule
$\lambda = 0$&
2   &   \num{1.3474}   &   \num{1.3474}   &   \num{2.6e-06}   &   \num{2.1e-06}   &   64   &   81 \\
$\lambda = 10^{-4}$&
2   &   \num{1.3476}   &   \num{1.3476}   &   \num{3.4e-06}   &   \num{2.9e-06}   &   47   &   79 \\
$\lambda = 10^{-2}$&
2   &   \num{1.3616}   &   \num{1.3616}   &   \num{7.2e-06}   &   \num{3.8e-06}   &   33   &   56 \\
\midrule
$\lambda = 0$&
3   &   \num{1.3474}   &   \num{1.3474}   &   \num{3.8e-06}   &   \num{2.3e-06}   &   33   &   78 \\
$\lambda = 10^{-4}$&
3   &   \num{1.3476}   &   \num{1.3476}   &   \num{2.1e-06}   &   \num{2.1e-06}   &   38   &   84 \\
$\lambda = 10^{-2}$&
3   &   \num{1.3616}   &   \num{1.3616}   &   \num{9.0e-06}   &   \num{3.6e-06}   &   36   &   67 \\
\midrule
$\lambda = 0$&
4   &   \num{1.3475}   &   \num{1.3474}   &   \num{3.0e-06}   &   \num{1.4e-06}   &   22   &   48 \\
$\lambda = 10^{-4}$&
4   &   \num{1.3477}   &   \num{1.3476}   &   \num{3.4e-06}   &   \num{2.9e-06}   &   22   &   43 \\
$\lambda = 10^{-2}$&
4   &   \num{1.3616}   &   \num{1.3615}   &   \num{2.7e-06}   &   \num{3.4e-06}   &   26   &   35 \\
\midrule
$\lambda = 0$&
5   &   \num{1.3474}   &   \num{1.3474}   &   \num{4.3e-06}   &   \num{2.5e-06}   &   21   &   56 \\
$\lambda = 10^{-4}$&
5   &   \num{1.3476}   &   \num{1.3476}   &   \num{2.3e-06}   &   \num{3.0e-06}   &   26   &   71 \\
$\lambda = 10^{-2}$&
5   &   \num{1.3616}   &   \num{1.3616}   &   \num{4.5e-06}   &   \num{2.1e-06}   &   27   &   55 \\
\midrule
$\lambda = 0$&
6   &   \num{1.3474}   &   \num{1.3474}   &   \num{1.9e-06}   &   \num{1.3e-06}   &   50   &   90 \\
$\lambda = 10^{-4}$&
6   &   \num{1.3476}   &   \num{1.3476}   &   \num{2.5e-06}   &   \num{2.2e-06}   &   49   &   69 \\
$\lambda = 10^{-2}$&
6   &   \num{1.3616}   &   \num{1.3617}   &   \num{8.3e-06}   &   \num{5.4e-06}   &   39   &   70 \\
    \bottomrule    
    \end{tabular}
\end{table}
\begin{table}[t]
    \caption{Control function $\psi$, poroviscoelastic case $\delta = 1$: Objective, instationarity, and (cumulative) outer iterations
    of \eqref{eq:p} for six initial controls achieved by
    executing \Cref{alg:slip} on the discretized problem as well
    as by executing a homotopy, abbreviated (H),
    of executions of \Cref{alg:slip}
    on regularized problems with regularization parameter driven to
    zero for the same initial controls.}
    \label{tab:first_experiment}
    \begin{tabular}{lr|llllll}
    \toprule
    & Ini.
    & Final Obj.
    & Final Obj. (H)
    & Final Inst.
    & Final Inst. (H) 
    & Iter.
    & Cum.\ Iter.\ (H)
    \\
    \midrule
$\lambda = 0$ &
1   &   \num{1.3647}   &   \num{1.3625}   &   \num{1.7e-04}   &   \num{4.0e-05}   &   48   &   265 \\
$\lambda = 10^{-4}$ &
1   &   \num{1.3647}   &   \num{1.3625}   &   \num{1.7e-04}   &   \num{1.1e-06}   &   40   &   232 \\
$\lambda = 10^{-2}$ &
1   &   \num{1.3732}   &   \num{1.3697}   &   \num{2.4e-04}   &   \num{5.8e-05}   &   35   &   240 \\
\midrule
$\lambda = 0$ &
2   &   \num{1.3646}   &   \num{1.3624}   &   \num{1.1e-04}   &   \num{1.3e-06}   &   62   &   289 \\
$\lambda = 10^{-4}$ &
2   &   \num{1.3644}   &   \num{1.3625}   &   \num{1.1e-04}   &   \num{1.1e-06}   &   57   &   278 \\
$\lambda = 10^{-2}$ &
2   &   \num{1.3715}   &   \num{1.3696}   &   \num{9.2e-05}   &   \num{1.8e-06}   &   67   &   349 \\
\midrule
$\lambda = 0$ &
3   &   \num{1.3657}   &   \num{1.3624}   &   \num{1.1e-04}   &   \num{1.3e-06}   &   26   &   268 \\
$\lambda = 10^{-4}$ &
3   &   \num{1.3656}   &   \num{1.3625}   &   \num{1.1e-04}   &   \num{1.1e-06}   &   31   &   262 \\
$\lambda = 10^{-2}$ &
3   &   \num{1.3730}   &   \num{1.3696}   &   \num{1.1e-04}   &   \num{1.8e-06}   &   26   &   241 \\
\midrule
$\lambda = 0$ &
4   &   \num{1.3624}   &   \num{1.3624}   &   \num{1.3e-06}   &   \num{1.3e-06}   &   24   &   201 \\
$\lambda = 10^{-4}$ &
4   &   \num{1.3625}   &   \num{1.3625}   &   \num{1.1e-06}   &   \num{1.1e-06}   &   22   &   207 \\
$\lambda = 10^{-2}$ &
4   &   \num{1.3696}   &   \num{1.3696}   &   \num{1.8e-06}   &   \num{1.8e-06}   &   22   &   194 \\
\midrule
$\lambda = 0$ &
5   &   \num{1.3652}   &   \num{1.3624}   &   \num{7.5e-05}   &   \num{1.3e-06}   &   34   &   240 \\
$\lambda = 10^{-4}$ &
5   &   \num{1.3652}   &   \num{1.3625}   &   \num{7.5e-05}   &   \num{1.1e-06}   &   34   &   264 \\
$\lambda = 10^{-2}$ &
5   &   \num{1.3716}   &   \num{1.3696}   &   \num{6.7e-05}   &   \num{1.8e-06}   &   41   &   283 \\
\midrule
$\lambda = 0$ &
6   &   \num{1.3634}   &   \num{1.3624}   &   \num{1.1e-04}   &   \num{1.3e-06}   &   43   &   303 \\
$\lambda = 10^{-4}$ &
6   &   \num{1.3635}   &   \num{1.3625}   &   \num{1.1e-04}   &   \num{1.1e-06}   &   43   &   312 \\
$\lambda = 10^{-2}$ &
6   &   \num{1.3700}   &   \num{1.3696}   &   \num{6.3e-05}   &   \num{1.8e-06}   &   64   &   349 \\
    \bottomrule    
    \end{tabular}
\end{table}

\begin{table}[!ht]
    \centering
    \caption{Control function $\psi$, poroelastic case $\delta = 0$: Number of iterations required by the executions of 
    \Cref{alg:slip} for the different initializations of the computational example over the different
    values of $\varepsilon$ of the homotopy, cumulative for the homotopy (H), and for the unregularized problem (U).}
    \label{tab:delta_0_iterations}
    ~
    \begin{tabular}{lr|llllllllll}
    \toprule
    && $\varepsilon = $ \\
    & Ini.
    & ${0.016}$
    & ${0.008}$
    & ${0.004}$
    & ${0.002}$
    & ${0.001}$
    & ${0.0005}$
    & ${0.00025}$
    & ${0}$
    & (H)
    & (U)
    \\
\midrule
$\lambda = 0$ &
1   &   30   &   4   &   2   &   1   &   2   &   1   &   1   &   2   &   43   &   35 \\
$\lambda = 10^{-4}$ &
1   &   31   &   2   &   2   &   2   &   1   &   1   &   1   &   2   &   42   &   35 \\
$\lambda = 10^{-2}$ &
1   &   28   &   8   &   4   &   2   &   1   &   1   &   1   &   2   &   47   &   31 \\
\midrule
$\lambda = 0$ &
2   &   53   &   13   &   5   &   3   &   2   &   1   &   1   &   3   &   81   &   64 \\
$\lambda = 10^{-4}$ &
2   &   57   &   5   &   7   &   5   &   2   &   1   &   1   &   1   &   79   &   47 \\
$\lambda = 10^{-2}$ &
2   &   35   &   4   &   7   &   3   &   2   &   1   &   1   &   3   &   56   &   33 \\
\midrule
$\lambda = 0$ &
3   &   66   &   2   &   3   &   2   &   1   &   1   &   1   &   2   &   78   &   33 \\
$\lambda = 10^{-4}$ &
3   &   60   &   2   &   12   &   3   &   2   &   1   &   1   &   3   &   84   &   38 \\
$\lambda = 10^{-2}$ &
3   &   47   &   7   &   6   &   1   &   2   &   1   &   1   &   2   &   67   &   36 \\
\midrule
$\lambda = 0$ &
4   &   29   &   8   &   2   &   2   &   3   &   1   &   1   &   2   &   48   &   22 \\
$\lambda = 10^{-4}$ &
4   &   23   &   4   &   6   &   3   &   4   &   1   &   1   &   1   &   43   &   22 \\
$\lambda = 10^{-2}$ &
4   &   22   &   2   &   3   &   3   &   2   &   1   &   1   &   1   &   35   &   26 \\
\midrule
$\lambda = 0$ &
5   &   41   &   2   &   2   &   6   &   1   &   1   &   1   &   2   &   56   &   21 \\
$\lambda = 10^{-4}$ &
5   &   49   &   2   &   2   &   4   &   8   &   1   &   1   &   4   &   71   &   26 \\
$\lambda = 10^{-2}$ &
5   &   41   &   2   &   3   &   3   &   1   &   1   &   1   &   3   &   55   &   27 \\
\midrule
$\lambda = 0$ &
6   &   79   &   2   &   2   &   2   &   1   &   1   &   1   &   2   &   90   &   50 \\
$\lambda = 10^{-4}$ &
6   &   58   &   2   &   2   &   2   &   1   &   1   &   1   &   2   &   69   &   49 \\
$\lambda = 10^{-2}$ &
6   &   50   &   6   &   6   &   1   &   2   &   1   &   1   &   3   &   70   &   39 \\  
\bottomrule    
    \end{tabular}
\end{table}

\begin{table}[!ht]
    \centering
    \caption{Control function $\psi$, poroviscoelastic case $\delta = 1$: Number of iterations required by the executions of 
    \Cref{alg:slip} for the different initializations of the computational example over the different
    values of $\varepsilon$ of the homotopy, cumulative for the homotopy (H), and for the unregularized problem (U).}
    \label{tab:iterations}
    \begin{tabular}{lr|llllllllll}
    \toprule
    & $\varepsilon = $ \\
    & Ini.
    & ${0.016}$
    & ${0.008}$
    & ${0.004}$
    & ${0.002}$
    & ${0.001}$
    & ${0.0005}$
    & ${0.00025}$
    & ${0}$
    & (H)
    & (U)
    \\
\midrule
$\lambda = 0$ &
1   &   120   &   59   &   44   &   23   &   11   &   1   &   1   &   6   &   265   &   48 \\
$\lambda = 10^{-4}$ &
1   &   112   &   40   &   35   &   26   &   8   &   1   &   1   &   9   &   232   &   40 \\
$\lambda = 10^{-2}$ &
1   &   102   &   58   &   43   &   19   &   11   &   1   &   1   &   5   &   240   &   35 \\
\midrule
$\lambda = 0$ &
2   &   145   &   62   &   47   &   24   &   6   &   1   &   1   &   3   &   289   &   62 \\
$\lambda = 10^{-4}$ &
2   &   134   &   64   &   38   &   15   &   17   &   1   &   1   &   8   &   278   &   57 \\
$\lambda = 10^{-2}$ &
2   &   194   &   61   &   43   &   30   &   13   &   1   &   1   &   6   &   349   &   67 \\
\midrule
$\lambda = 0$ &
3   &   99   &   56   &   62   &   31   &   10   &   1   &   1   &   8   &   268   &   26 \\
$\lambda = 10^{-4}$ &
3   &   117   &   45   &   50   &   18   &   14   &   1   &   1   &   16   &   262   &   31 \\
$\lambda = 10^{-2}$ &
3   &   119   &   36   &   44   &   29   &   7   &   1   &   1   &   4   &   241   &   26 \\
\midrule
$\lambda = 0$ &
4   &   64   &   54   &   42   &   27   &   9   &   1   &   1   &   3   &   201   &   24 \\
$\lambda = 10^{-4}$ &
4   &   62   &   75   &   34   &   17   &   12   &   1   &   1   &   5   &   207   &   22 \\
$\lambda = 10^{-2}$ &
4   &   73   &   45   &   39   &   16   &   15   &   1   &   1   &   4   &   194   &   22 \\
\midrule
$\lambda = 0$ &
5   &   104   &   57   &   37   &   22   &   11   &   1   &   1   &   7   &   240   &   34 \\
$\lambda = 10^{-4}$ &
5   &   132   &   50   &   37   &   26   &   7   &   1   &   1   &   10   &   264   &   34 \\
$\lambda = 10^{-2}$ &
5   &   111   &   60   &   57   &   30   &   17   &   1   &   1   &   6   &   283   &   41 \\
\midrule
$\lambda = 0$ &
6   &   131   &   78   &   42   &   23   &   19   &   1   &   1   &   8   &   303   &   43 \\
$\lambda = 10^{-4}$ &
6   &   145   &   73   &   43   &   23   &   19   &   1   &   1   &   7   &   312   &   43 \\
$\lambda = 10^{-2}$ &
6   &   194   &   73   &   39   &   23   &   12   &   1   &   1   &   6   &   349   &   64 \\
\bottomrule    
\end{tabular}
\end{table}
In order to also give a qualitative expression of the
produced controls, we visualize them in
\Cref{fig:final_controls,fig:delta_0_final_controls}
for $\delta = 0$, $\delta = 1$ and $\lambda = 10^{-2}$.
\begin{figure}[!h]
    \centering
    \begin{subfigure}{.33\textwidth}
    \begin{tikzpicture}
    \begin{axis}[width=\textwidth,xlabel=$t\quad {\tiny(w^0 \equiv -7)}$,ymin=-5.25,ymax=0.25]
    \addplot[const plot, no marks]
        table[x index=0, y index=1, col sep=comma]
        {./data/w_1_0_1.00e-02_ini.csv};
    \addplot[const plot, dashed, very thick, no marks]
        table[x index=0, y index=1, col sep=comma]
        {./data/w_1_0_1.00e-02.csv};        
    \end{axis}
    \end{tikzpicture}
    \end{subfigure}\hfill
    \begin{subfigure}{.33\textwidth}
    \begin{tikzpicture}
    \begin{axis}[width=\textwidth,xlabel=$t\quad {\tiny(w^0 \equiv -5)}$,ymin=-7.25,ymax=0.25]
    \addplot[const plot, no marks]
        table[x index=0, y index=1, col sep=comma]
        {./data/w_2_0_1.00e-02_ini.csv};
    \addplot[const plot, dashed, very thick, no marks]
        table[x index=0, y index=1, col sep=comma]
        {./data/w_2_0_1.00e-02.csv};        
    \end{axis}
    \end{tikzpicture}
    \end{subfigure}\hfill
    \begin{subfigure}{.33\textwidth}
    \begin{tikzpicture}
    \begin{axis}[width=\textwidth,xlabel=$t\enskip (w^0 \equiv -3)$,ymin=-7.25,ymax=0.25]
    \addplot[const plot, no marks]
        table[x index=0, y index=1, col sep=comma]
        {./data/w_3_0_1.00e-02_ini.csv};
    \addplot[const plot, dashed, very thick, no marks]
        table[x index=0, y index=1, col sep=comma]
        {./data/w_3_0_1.00e-02.csv};    
    \end{axis}
    \end{tikzpicture}
    \end{subfigure}\\
    \begin{subfigure}{.33\textwidth}
    \begin{tikzpicture}
    \begin{axis}[width=\textwidth,xlabel=$t\quad {\tiny(w^0 \equiv -1)}$,ymin=-7.25,ymax=0.25]
    \addplot[const plot, no marks]
        table[x index=0, y index=1, col sep=comma]
        {./data/w_4_0_1.00e-02_ini.csv};
    \addplot[const plot, dashed, very thick, no marks]
        table[x index=0, y index=1, col sep=comma]
        {./data/w_4_0_1.00e-02.csv};    
    \end{axis}
    \end{tikzpicture}
    \end{subfigure}\hfill
    \begin{subfigure}{.33\textwidth}
    \begin{tikzpicture}
    \begin{axis}[width=\textwidth,xlabel=$t\quad {\tiny(w^0 \equiv 0)}$,ymin=-7.25,ymax=0.25]
    \addplot[const plot, no marks]
        table[x index=0, y index=1, col sep=comma]
        {./data/w_5_0_1.00e-02_ini.csv};
    \addplot[const plot, dashed, very thick, no marks]
        table[x index=0, y index=1, col sep=comma]
        {./data/w_5_0_1.00e-02.csv};  
    \end{axis}
    \end{tikzpicture}
    \end{subfigure}\hfill
    \begin{subfigure}{.33\textwidth}
    \begin{tikzpicture}
    \begin{axis}[width=\textwidth,xlabel=$t\enskip (w^0 \equiv 2)$,ymin=-7.25,ymax=0.25]
    \addplot[const plot, no marks]
        table[x index=0, y index=1, col sep=comma]
        {./data/w_6_0_1.00e-02_ini.csv};
    \addplot[const plot, dashed, very thick, no marks]
        table[x index=0, y index=1, col sep=comma]
        {./data/w_6_0_1.00e-02.csv};  
    \end{axis}
    \end{tikzpicture}
    \end{subfigure}    
    \caption{Control function $\psi$, poroelastic case $\delta = 0$, $\lambda = 10^{-2}$: final control functions produced 
    for the unregularized optimization (solid)
    and for the homotopy (dashed).}
    \label{fig:delta_0_final_controls}
\end{figure}

\begin{figure}[!h]
    \centering
    \begin{subfigure}{.33\textwidth}
    \begin{tikzpicture}
    \begin{axis}[width=\textwidth,xlabel=$t\quad {\tiny(w^0 \equiv -7)}$,ymin=-3.25,ymax=0.25]
    \addplot[const plot, no marks]
        table[x index=0, y index=1, col sep=comma]
        {./data/w_1_1_1.00e-02_ini.csv};
    \addplot[const plot, dashed, very thick, no marks]
        table[x index=0, y index=1, col sep=comma]
        {./data/w_1_1_1.00e-02.csv};        
    \end{axis}
    \end{tikzpicture}
    \end{subfigure}\hfill
    \begin{subfigure}{.33\textwidth}
    \begin{tikzpicture}
    \begin{axis}[width=\textwidth,xlabel=$t\quad {\tiny(w^0 \equiv -5)}$,ymin=-3.25,ymax=0.25]
    \addplot[const plot, no marks]
        table[x index=0, y index=1, col sep=comma]
        {./data/w_2_1_1.00e-02_ini.csv};
    \addplot[const plot, dashed, very thick, no marks]
        table[x index=0, y index=1, col sep=comma]
        {./data/w_2_1_1.00e-02.csv};        
    \end{axis}
    \end{tikzpicture}
    \end{subfigure}\hfill
    \begin{subfigure}{.33\textwidth}
    \begin{tikzpicture}
    \begin{axis}[width=\textwidth,xlabel=$t\enskip (w^0 \equiv -3)$,ymin=-3.25,ymax=0.25]
    \addplot[const plot, no marks]
        table[x index=0, y index=1, col sep=comma]
        {./data/w_3_1_1.00e-02_ini.csv};
    \addplot[const plot, dashed, very thick, no marks]
        table[x index=0, y index=1, col sep=comma]
        {./data/w_3_1_1.00e-02.csv};    
    \end{axis}
    \end{tikzpicture}
    \end{subfigure}\\
    \begin{subfigure}{.33\textwidth}
    \begin{tikzpicture}
    \begin{axis}[width=\textwidth,xlabel=$t\quad {\tiny(w^0 \equiv -1)}$,ymin=-3.25,ymax=0.25]
    \addplot[const plot, no marks]
        table[x index=0, y index=1, col sep=comma]
        {./data/w_4_1_1.00e-02_ini.csv};
    \addplot[const plot, dashed, very thick, no marks]
        table[x index=0, y index=1, col sep=comma]
        {./data/w_4_1_1.00e-02.csv};    
    \end{axis}
    \end{tikzpicture}
    \end{subfigure}\hfill
    \begin{subfigure}{.33\textwidth}
    \begin{tikzpicture}
    \begin{axis}[width=\textwidth,xlabel=$t\quad {\tiny(w^0 \equiv 0)}$,ymin=-3.25,ymax=0.25]
    \addplot[const plot, no marks]
        table[x index=0, y index=1, col sep=comma]
        {./data/w_5_1_1.00e-02_ini.csv};
    \addplot[const plot, dashed, very thick, no marks]
        table[x index=0, y index=1, col sep=comma]
        {./data/w_5_1_1.00e-02.csv};  
    \end{axis}
    \end{tikzpicture}
    \end{subfigure}\hfill
    \begin{subfigure}{.33\textwidth}
    \begin{tikzpicture}
    \begin{axis}[width=\textwidth,xlabel=$t\enskip (w^0 \equiv 2)$,ymin=-4.25,ymax=0.25]
    \addplot[const plot, no marks]
        table[x index=0, y index=1, col sep=comma]
        {./data/w_6_1_1.00e-02_ini.csv};
    \addplot[const plot, dashed, very thick, no marks]
        table[x index=0, y index=1, col sep=comma]
        {./data/w_6_1_1.00e-02.csv};  
    \end{axis}
    \end{tikzpicture}
    \end{subfigure}    
    \caption{Control function $\psi$, poroviscoelastic case $\delta = 1$, $\lambda = 10^{-2}$: final control functions produced 
    for the unregularized optimization (solid) and for the homotopy (dashed).}
    \label{fig:final_controls}
\end{figure}
The unregularized optimization terminates
after taking
between 21 and 64 iterations for $\delta = 0$ and taking between 24 and 67 iterations for $\delta = 1$.
The homotopy takes a cumulative number of iterations
between 35 and 90 iterations for $\delta = 0$ and takes a much
higher cumulative number between 194 and 349 iterations for
$\delta = 1$.

The objective values with the unregularized optimization problem
are very similar to the objective values produced by the homotopy
for $\delta = 0$ with relative differences generally below $10^{-4}$. 
This is different for $\delta = 1$, where the homotopy generally 
achieves  lower objective values with relative differences generally 
around $10^{-3}$.

For $\delta = 0$ the remaining instationarities are generally
similar and of the same order of magnitude for the unregularized
optimization and the homotopy. For $\delta = 1$, the
final instationarities obtained with the unregularized optimization
are generally (but not in all cases) between one and two orders of 
magnitude higher.

\paragraph{Input choice $S$.}
We report the details in \Cref{tab:delta_0_first_experiment_S} for
the poroelastic case $\delta = 0$ and in \Cref{tab:first_experiment_S}
for the poroviscoelastic case. Detailed iteration numbers
over the different values of the homotopy are given in 
\Cref{tab:delta_0_iterations_S} for $\delta = 0$
and \Cref{tab:iterations_S} for $\delta = 1$.

\begin{table}[t]
    \centering
    \caption{Control function $S$, poroelastic case $\delta = 0$: Objective, instationarity, and (cumulative)
    outer iterations of \eqref{eq:p} for six initial controls achieved by
    executing \Cref{alg:slip} on the discretized problem as well
    as by executing a homotopy, abbreviated (H),
    of executions of \Cref{alg:slip}
    on regularized problems with regularization parameter driven to
    zero for the same initial controls.}
    \label{tab:delta_0_first_experiment_S}
    \begin{tabular}{lr|llllll}
    \toprule
    & Ini.
    & Final Obj.
    & Final Obj. (H)
    & Final Inst.
    & Final Inst. (H) 
    & Iter.
    & Cum.\ Iter.\ (H)
    \\
\midrule
$\lambda = 0$ &
1   &   \num{1.2420}   &   \num{1.2417}   &   \num{1.0e-05}   &   \num{9.1e-06}   &   16   &   24 \\
$\lambda = 10^{-4}$ &
1   &   \num{1.2420}   &   \num{1.2417}   &   \num{1.0e-05}   &   \num{9.2e-06}   &   16   &   24 \\
$\lambda = 10^{-2}$ &
1   &   \num{1.2454}   &   \num{1.2451}   &   \num{2.0e-05}   &   \num{1.9e-05}   &   16   &   24 \\
\midrule
$\lambda = 0$ &
2   &   \num{1.2423}   &   \num{1.2421}   &   \num{1.5e-05}   &   \num{1.2e-05}   &   25   &   33 \\
$\lambda = 10^{-4}$ &
2   &   \num{1.2423}   &   \num{1.2422}   &   \num{1.5e-05}   &   \num{1.2e-05}   &   25   &   33 \\
$\lambda = 10^{-2}$ &
2   &   \num{1.2602}   &   \num{1.2455}   &   \num{4.4e-07}   &   \num{2.4e-06}   &   29   &   40 \\
\midrule
$\lambda = 0$ &
3   &   \num{1.2428}   &   \num{1.2424}   &   \num{8.8e-06}   &   \num{1.2e-05}   &   24   &   32 \\
$\lambda = 10^{-4}$ &
3   &   \num{1.2429}   &   \num{1.2424}   &   \num{8.7e-06}   &   \num{1.3e-05}   &   26   &   32 \\
$\lambda = 10^{-2}$ &
3   &   \num{1.2462}   &   \num{1.2458}   &   \num{9.5e-07}   &   \num{2.2e-05}   &   26   &   32 \\
\midrule
$\lambda = 0$ &
4   &   \num{1.2417}   &   \num{1.2428}   &   \num{1.1e-05}   &   \num{1.5e-05}   &   15   &   27 \\
$\lambda = 10^{-4}$ &
4   &   \num{1.2417}   &   \num{1.2428}   &   \num{1.1e-05}   &   \num{1.5e-05}   &   15   &   27 \\
$\lambda = 10^{-2}$ &
4   &   \num{1.2450}   &   \num{1.2462}   &   \num{2.1e-05}   &   \num{2.5e-05}   &   15   &   29 \\
\midrule
$\lambda = 0$ &
5   &   \num{1.2414}   &   \num{1.2423}   &   \num{1.1e-05}   &   \num{1.6e-05}   &   12   &   23 \\
$\lambda = 10^{-4}$ &
5   &   \num{1.2414}   &   \num{1.2424}   &   \num{1.1e-05}   &   \num{1.6e-05}   &   12   &   23 \\
$\lambda = 10^{-2}$ &
5   &   \num{1.2448}   &   \num{1.2457}   &   \num{2.1e-05}   &   \num{6.3e-06}   &   12   &   24 \\
\midrule
$\lambda = 0$ &
6   &   \num{1.2553}   &   \num{1.2556}   &   \num{1.1e-05}   &   \num{2.5e-05}   &   16   &   28 \\
$\lambda = 10^{-4}$ &
6   &   \num{1.2554}   &   \num{1.2556}   &   \num{1.1e-05}   &   \num{2.5e-05}   &   16   &   28 \\
$\lambda = 10^{-2}$ &
6   &   \num{1.2602}   &   \num{1.2605}   &   \num{1.1e-05}   &   \num{1.0e-05}   &   13   &   29 \\
    \bottomrule    
    \end{tabular}
\end{table}
\begin{table}[t]
    \caption{Control function $S$, poroviscoelastic case $\delta = 1$: Objective,
    instationarity, and (cumulative) outer iterations of \eqref{eq:p} for six initial
    controls achieved by executing \Cref{alg:slip} on the discretized problem as well
    as by executing a homotopy, abbreviated (H), of executions of \Cref{alg:slip}
    on regularized problems with regularization parameter driven to
    zero for the same initial controls.}
    \label{tab:first_experiment_S}
    \begin{tabular}{lr|llllll}
    \toprule
    & Ini.
    & Final Obj.
    & Final Obj. (H)
    & Final Inst.
    & Final Inst. (H) 
    & Iter.
    & Cum.\ Iter.\ (H)
    \\
\midrule
$\lambda = 0$ &
1   &   \num{1.8162}   &   \num{1.4783}   &   \num{3.6e-03}   &   \num{5.2e-05}   &   10   &   45 \\
$\lambda = 10^{-4}$ &
1   &   \num{1.8162}   &   \num{1.4783}   &   \num{3.6e-03}   &   \num{5.2e-05}   &   10   &   45 \\
$\lambda = 10^{-2}$ &
1   &   \num{1.8185}   &   \num{1.4814}   &   \num{3.5e-03}   &   \num{4.2e-05}   &   10   &   42 \\
\midrule
$\lambda = 0$ &
2   &   \num{1.5849}   &   \num{1.4930}   &   \num{1.2e-03}   &   \num{2.3e-04}   &   20   &   57 \\
$\lambda = 10^{-4}$ &
2   &   \num{1.5849}   &   \num{1.4930}   &   \num{1.2e-03}   &   \num{2.3e-04}   &   20   &   57 \\
$\lambda = 10^{-2}$ &
2   &   \num{1.5755}   &   \num{1.4971}   &   \num{1.0e-03}   &   \num{2.2e-04}   &   21   &   58 \\
\midrule
$\lambda = 0$ &
3   &   \num{1.5480}   &   \num{1.5060}   &   \num{9.7e-04}   &   \num{1.5e-04}   &   17   &   53 \\
$\lambda = 10^{-4}$ &
3   &   \num{1.5481}   &   \num{1.5060}   &   \num{9.7e-04}   &   \num{1.5e-04}   &   17   &   53 \\
$\lambda = 10^{-2}$ &
3   &   \num{1.6598}   &   \num{1.5092}   &   \num{1.5e-03}   &   \num{1.4e-04}   &   17   &   58 \\
\midrule
$\lambda = 0$ &
4   &   \num{2.0874}   &   \num{1.5176}   &   \num{3.9e-03}   &   \num{3.1e-04}   &   15   &   49 \\
$\lambda = 10^{-4}$ &
4   &   \num{2.0874}   &   \num{1.5177}   &   \num{3.9e-03}   &   \num{3.1e-04}   &   15   &   49 \\
$\lambda = 10^{-2}$ &
4   &   \num{2.0279}   &   \num{1.4846}   &   \num{3.6e-03}   &   \num{7.3e-05}   &   18   &   50 \\
\midrule
$\lambda = 0$ &
5   &   \num{1.7936}   &   \num{1.4807}   &   \num{1.0e-03}   &   \num{3.6e-05}   &   8   &   46 \\
$\lambda = 10^{-4}$ &
5   &   \num{1.7936}   &   \num{1.4808}   &   \num{1.0e-03}   &   \num{3.6e-05}   &   8   &   46 \\
$\lambda = 10^{-2}$ &
5   &   \num{1.8013}   &   \num{1.4838}   &   \num{1.2e-03}   &   \num{2.6e-05}   &   10   &   49 \\
\midrule
$\lambda = 0$ &
6   &   \num{1.4730}   &   \num{1.4713}   &   \num{3.6e-04}   &   \num{2.8e-04}   &   14   &   45 \\
$\lambda = 10^{-4}$ &
6   &   \num{1.4731}   &   \num{1.4713}   &   \num{3.6e-04}   &   \num{2.8e-04}   &   14   &   45 \\
$\lambda = 10^{-2}$ &
6   &   \num{1.4776}   &   \num{1.4752}   &   \num{3.6e-04}   &   \num{2.2e-04}   &   14   &   46 \\
    \bottomrule    
    \end{tabular}
\end{table}

\begin{table}[!ht]
    \centering
    \caption{Control function $S$, poroelastic case $\delta = 0$: Number of iterations required by the executions of 
    \Cref{alg:slip} forthe different initializations of the computational example over the different
    values of $\varepsilon$ of the homotopy, cumulative for the homotopy (H), and for the unregularized problem (U).}
    \label{tab:delta_0_iterations_S}
    \begin{tabular}{lr|llllllllll}
    \toprule
    && $\varepsilon = $ \\
    & Ini.
    & ${0.016}$
    & ${0.008}$
    & ${0.004}$
    & ${0.002}$
    & ${0.001}$
    & ${0.0005}$
    & ${0.00025}$
    & ${0}$
    & (H)
    & (U)
    \\
\midrule
$\lambda = 0$ &
1   &   14   &   2   &   1   &   1   &   1   &   1   &   2   &   2   &   24   &   16 \\
$\lambda = 10^{-4}$ &
1   &   14   &   2   &   1   &   1   &   1   &   1   &   2   &   2   &   24   &   16 \\
$\lambda = 10^{-2}$ &
1   &   14   &   2   &   1   &   1   &   1   &   1   &   2   &   2   &   24   &   16 \\
\midrule
$\lambda = 0$ &
2   &   20   &   1   &   1   &   2   &   2   &   3   &   1   &   3   &   33   &   25 \\
$\lambda = 10^{-4}$ &
2   &   20   &   1   &   1   &   2   &   2   &   3   &   1   &   3   &   33   &   25 \\
$\lambda = 10^{-2}$ &
2   &   25   &   3   &   1   &   3   &   2   &   2   &   1   &   3   &   40   &   29 \\
\midrule
$\lambda = 0$ &
3   &   19   &   3   &   1   &   2   &   3   &   1   &   1   &   2   &   32   &   24 \\
$\lambda = 10^{-4}$ &
3   &   19   &   3   &   1   &   2   &   3   &   1   &   1   &   2   &   32   &   26 \\
$\lambda = 10^{-2}$ &
3   &   19   &   2   &   2   &   2   &   3   &   1   &   1   &   2   &   32   &   26 \\
\midrule
$\lambda = 0$ &
4   &   16   &   2   &   1   &   2   &   3   &   1   &   1   &   1   &   27   &   15 \\
$\lambda = 10^{-4}$ &
4   &   16   &   2   &   1   &   2   &   3   &   1   &   1   &   1   &   27   &   15 \\
$\lambda = 10^{-2}$ &
4   &   16   &   3   &   1   &   3   &   3   &   1   &   1   &   1   &   29   &   15 \\
\midrule
$\lambda = 0$ &
5   &   11   &   1   &   2   &   3   &   2   &   2   &   1   &   1   &   23   &   12 \\
$\lambda = 10^{-4}$ &
5   &   11   &   1   &   2   &   3   &   2   &   2   &   1   &   1   &   23   &   12 \\
$\lambda = 10^{-2}$ &
5   &   12   &   1   &   2   &   2   &   2   &   3   &   1   &   1   &   24   &   12 \\
\midrule
$\lambda = 0$ &
6   &   15   &   2   &   2   &   1   &   4   &   2   &   1   &   1   &   28   &   16 \\
$\lambda = 10^{-4}$ &
6   &   15   &   2   &   2   &   1   &   4   &   2   &   1   &   1   &   28   &   16 \\
$\lambda = 10^{-2}$ &
6   &   12   &   2   &   3   &   1   &   3   &   1   &   4   &   3   &   29   &   13 \\
\bottomrule    
\end{tabular}
\end{table}

\begin{table}[!ht]
    \centering
    \caption{Control function $S$, poroviscoelastic case $\delta = 1$: Number of iterations required by the executions
    of \Cref{alg:slip} for the different initializations of the computational example over the different
    values of $\varepsilon$ of the homotopy, cumulative for the homotopy (H), and for the unregularized problem (U).}
    \label{tab:iterations_S}
    \begin{tabular}{lr|llllllllll}
    \toprule
    && $\varepsilon = $ \\
    & Ini.
    & ${0.016}$
    & ${0.008}$
    & ${0.004}$
    & ${0.002}$
    & ${0.001}$
    & ${0.0005}$
    & ${0.00025}$
    & ${0}$
    & (H)
    & (U)
    \\
\midrule
$\lambda = 0$ &
1   &   21   &   4   &   9   &   2   &   1   &   3   &   3   &   2   &   45   &   10 \\
$\lambda = 10^{-4}$ &
1   &   21   &   4   &   9   &   2   &   1   &   3   &   3   &   2   &   45   &   10 \\
$\lambda = 10^{-2}$ &
1   &   20   &   10   &   2   &   1   &   1   &   3   &   3   &   2   &   42   &   10 \\
\midrule
$\lambda = 0$ &
2   &   36   &   4   &   4   &   3   &   3   &   3   &   2   &   2   &   57   &   20 \\
$\lambda = 10^{-4}$ &
2   &   36   &   4   &   4   &   3   &   3   &   3   &   2   &   2   &   57   &   20 \\
$\lambda = 10^{-2}$ &
2   &   38   &   4   &   3   &   3   &   3   &   3   &   2   &   2   &   58   &   21 \\
\midrule
$\lambda = 0$ &
3   &   25   &   16   &   2   &   2   &   2   &   1   &   3   &   2   &   53   &   17 \\
$\lambda = 10^{-4}$ &
3   &   25   &   16   &   2   &   2   &   2   &   1   &   3   &   2   &   53   &   17 \\
$\lambda = 10^{-2}$ &
3   &   29   &   17   &   2   &   2   &   2   &   1   &   3   &   2   &   58   &   17 \\
\midrule
$\lambda = 0$ &
4   &   31   &   4   &   3   &   3   &   1   &   2   &   3   &   2   &   49   &   15 \\
$\lambda = 10^{-4}$ &
4   &   31   &   4   &   3   &   3   &   1   &   2   &   3   &   2   &   49   &   15 \\
$\lambda = 10^{-2}$ &
4   &   30   &   4   &   3   &   3   &   1   &   2   &   3   &   4   &   50   &   18 \\
\midrule
$\lambda = 0$ &
5   &   31   &   2   &   1   &   3   &   3   &   2   &   2   &   2   &   46   &   8 \\
$\lambda = 10^{-4}$ &
5   &   31   &   2   &   1   &   3   &   3   &   2   &   2   &   2   &   46   &   8 \\
$\lambda = 10^{-2}$ &
5   &   31   &   2   &   1   &   3   &   3   &   3   &   4   &   2   &   49   &   10 \\
\midrule
$\lambda = 0$ &
6   &   26   &   3   &   2   &   2   &   5   &   3   &   2   &   2   &   45   &   14 \\
$\lambda = 10^{-4}$ &
6   &   26   &   3   &   2   &   2   &   5   &   3   &   2   &   2   &   45   &   14 \\
$\lambda = 10^{-2}$ &
6   &   27   &   3   &   2   &   2   &   5   &   3   &   2   &   2   &   46   &   14 \\
\bottomrule    
\end{tabular}
\end{table}
The unregularized optimization terminates after taking between 12 and 29 iterations for
$\delta = 0$ and taking between 8 and 21 iterations for $\delta = 1$. The homotopy takes
a cumulative number of iterations between 23 and 40 iterations for $\delta = 0$ and takes a much higher cumulative number
between 42 and 58 iterations for $\delta = 1$.

The objective values with the unregularized optimization problem have relative differences generally
around $10^{-2}$ compared to the objective values produced by the homotopy for $\delta = 0$.
This is different for $\delta = 1$, where the homotopy generally achieves lower objective values with
relative differences generally (but not in all cases) higher than $10^{-1}$.

For $\delta = 0$ the remaining instationarities are generally similar and of the same order of magnitude
for the unregularized optimization and the homotopy. For $\delta = 1$, the final instationarities obtained
with the unregularized optimization are generally (but not in all cases) between one and two orders of
magnitude higher.

\section{Conclusion}\label{sec:conclusion}
We investigated the regularity condition \Cref{ass:strong_assumption} that is required for \eqref{eq:p} for the 
convergence analysis of \Cref{alg:slip} in \cite{leyffer2022sequential} and proved a $\Gamma$-convergence 
result on a mollification of the control input as well as strict convergence for the iterates of a
corresponding homotopy trust-region algorithm.

We assess the proposed regularization for control problems governed by poroelastic and
poroviscoelastic equations modeling fluid flows through porous media. We considered the associated 1D
(in space) models with two possible control inputs (one acting in the interior and one acting on the boundary).
We showed that the regularity conditions are violated when the viscosity parameter $\delta$ is taken strictly
greater than zero or a Tikhonov term (for example, when $\lambda > 0$) is present. In comparison,
we proved that the necessary regularity conditions are satisfied for poroelastic systems (i.e., $\delta = 0$)
without Tikhonov term (i.e., $\lambda = 0$).

We applied \Cref{alg:slip} to instances of
\eqref{eq:p} for differently scaled 
Tikhonov terms, for both $\delta = 0$ and $\delta = 1$, and
for the two different control inputs that were analyzed
before. We observed that the presence of the Tikhonov term does
not seem to negatively impact the practical performance of
\Cref{alg:slip}, although \Cref{ass:strong_assumption} is always
violated in this case. If \Cref{ass:strong_assumption} is violated
due to the choice $\delta = 1$ (i.e., in the poroviscoelastic case), then the performance of \Cref{alg:slip}
and the quality of the final iterates it produces before
the trust region collapses is degraded.
This can be alleviated by executing a homotopy that drives
the support parameter of a mollification of the
control input into the PDE to zero over the course
of the optimization. However, the execution of the homotopy comes at
a higher computational cost. \clearpage%%%%%%%%%%%%%%%%%%%%%%%%%%%%%%%%%%%%%%%%%%%%%%%
\bibliographystyle{jnsao}
\bibliography{biblio}

\begin{thebibliography}{10}

\bibitem{ambrosio2000functions}
L{.\nobreak\kern 0.33333em}Ambrosio, N{.\nobreak\kern 0.33333em}Fusco, and
  D{.\nobreak\kern 0.33333em}Pallara, \emph{Functions of Bounded Variation and
  Free Discontinuity Problems}, volume 254 of Oxford Mathematical Monographs,
  Clarendon Press Oxford, 2000,
  \href{https://dx.doi.org/10.1093/oso/9780198502456.001.0001}{\nolinkurl{doi:10.1093/oso/9780198502456.001.0001}}.

\bibitem{apostol1974mathematical}
T.\,M{.\nobreak\kern 0.33333em}Apostol, \emph{Mathematical Analysis},
  Addison-Wesley Longman, 1974.

\bibitem{Araujo}
R.\,P{.\nobreak\kern 0.33333em}Araujo and D.\,L.\,S{.\nobreak\kern
  0.33333em}McElwain, A mixture theory for the genesis of residual stresses in
  growing tissues {I}: a general formulation., \emph{SIAM J. Appl. Math.
  65(4):1261--1284}  (2005),
  \href{https://dx.doi.org/10.1137/04060711}{\nolinkurl{doi:10.1137/04060711}}.

\bibitem{bestehorn2019switching}
F{.\nobreak\kern 0.33333em}Bestehorn, C{.\nobreak\kern 0.33333em}Hansknecht,
  C{.\nobreak\kern 0.33333em}Kirches, and P{.\nobreak\kern 0.33333em}Manns, A
  switching cost aware rounding method for relaxations of mixed-integer optimal
  control problems, in \emph{2019 IEEE 58th Conference on Decision and Control
  (CDC)}, 2019,  7134--7139,
  \href{https://dx.doi.org/10.1109/CDC40024.2019.9030063}{\nolinkurl{doi:10.1109/cdc40024.2019.9030063}}.

\bibitem{bestehorn2021mixed}
F{.\nobreak\kern 0.33333em}Bestehorn, C{.\nobreak\kern 0.33333em}Hansknecht,
  C{.\nobreak\kern 0.33333em}Kirches, and P{.\nobreak\kern 0.33333em}Manns,
  Mixed-integer optimal control problems with switching costs: a shortest path
  approach, \emph{Mathematical Programming} 188 (2021),  621--652,
  \href{https://dx.doi.org/10.1007/s10107-020-01581-3}{\nolinkurl{doi:10.1007/s10107-020-01581-3}}.

\bibitem{biot}
M{.\nobreak\kern 0.33333em}Biot, General theory of three-dimensional
  consolidation, \emph{J. Appl. Phys, 12(2) pp. 155--164}  (1941),
  \href{https://dx.doi.org/10.1063/1.1712886}{\nolinkurl{doi:10.1063/1.1712886}}.

\bibitem{MBE19}
L{.\nobreak\kern 0.33333em}Bociu, G{.\nobreak\kern 0.33333em}Guidoboni,
  R{.\nobreak\kern 0.33333em}Sacco, and M{.\nobreak\kern 0.33333em}Verri, On
  the role of compressibility in poroviscoelastic models, \emph{Mathematical
  Biosciences and Engineering} 16(5) (2019),  6167--6208,
  \href{https://dx.doi.org/10.3934/mbe.2019308}{\nolinkurl{doi:10.3934/mbe.2019308}}.

\bibitem{BGSW}
L{.\nobreak\kern 0.33333em}Bociu, G{.\nobreak\kern 0.33333em}Guidoboni,
  R{.\nobreak\kern 0.33333em}Sacco, and J{.\nobreak\kern 0.33333em}Webster,
  Analysis of nonlinear poro-elastic and poro-viscoelastic models,
  \emph{Archive for Rational Mechanics and Analysis 222, 1445-1519}  (2016),
  \href{https://dx.doi.org/10.1007/s00205-016-1024-9}{\nolinkurl{doi:10.1007/s00205-016-1024-9}}.

\bibitem{BS-AA}
L{.\nobreak\kern 0.33333em}Bociu and S{.\nobreak\kern 0.33333em}Strikwerda,
  Optimal control in poroelasticity, \emph{Applicable Analysis} 101(5) (2022),
  1774--1796,
  \href{https://dx.doi.org/10.1080/00036811.2021.2008372}{\nolinkurl{doi:10.1080/00036811.2021.2008372}}.

\bibitem{BS_AWM}
L{.\nobreak\kern 0.33333em}Bociu and S{.\nobreak\kern 0.33333em}Strikwerda,
  Poro-visco-elasticity in biomechanics - optimal control, \textit{AWM:
  Research in the Mathematics of Materials Science}, \emph{Springer}  (2022),
  \href{https://dx.doi.org/10.1007/978-3-031-04496-0_5}{\nolinkurl{doi:10.1007/978-3-031-04496-0_5}}.

\bibitem{bredies2010total}
K{.\nobreak\kern 0.33333em}Bredies, K{.\nobreak\kern 0.33333em}Kunisch, and
  T{.\nobreak\kern 0.33333em}Pock, Total generalized variation, \emph{SIAM
  Journal on Imaging Sciences} 3 (2010),  492--526,
  \href{https://dx.doi.org/10.1137/09076952}{\nolinkurl{doi:10.1137/09076952}}.

\bibitem{CKL}
E{.\nobreak\kern 0.33333em}Casas, P{.\nobreak\kern 0.33333em}Kogut, and
  G{.\nobreak\kern 0.33333em}Leugering, Approximation of optimal control
  problems in the coefficient for the p-laplace equation. I. convergence
  result, \emph{Siam J. Control Optim.} 54 (2016),  1406--1422,
  \href{https://dx.doi.org/10.1137/15M1028108}{\nolinkurl{doi:10.1137/15m1028108}}.

\bibitem{CKK}
E{.\nobreak\kern 0.33333em}Casas, F{.\nobreak\kern 0.33333em}Kruse, and
  K{.\nobreak\kern 0.33333em}Kunisch, Optimal control of semilinear parabolic
  equations by BV-functions, \emph{SIAM J. Control Optim.} 55 (2017),
  1752--1788,
  \href{https://dx.doi.org/10.1137/16m1056511}{\nolinkurl{doi:10.1137/16m1056511}}.

\bibitem{castillo2016}
R.\,E{.\nobreak\kern 0.33333em}Castillo and H{.\nobreak\kern 0.33333em}Rafeiro,
  \emph{An Introductory Course in Lebesgue Spaces}, CMS Books in Mathematics,
  Springer Cham, 2016,
  \href{https://dx.doi.org/10.1007/978-3-319-30034-4}{\nolinkurl{doi:10.1007/978-3-319-30034-4}}.

\bibitem{causin}
P{.\nobreak\kern 0.33333em}Causin, G{.\nobreak\kern 0.33333em}Guidoboni,
  A{.\nobreak\kern 0.33333em}Harris, D{.\nobreak\kern 0.33333em}Prada,
  R{.\nobreak\kern 0.33333em}Sacco, and S{.\nobreak\kern 0.33333em}Terragni, A
  poroelastic model for the perfusion of the lamina cribrosa in the optic nerve
  head, \emph{Math Biosci}  (2014),  33--41,
  \href{https://dx.doi.org/10.1016/j.mbs.2014.08.002}{\nolinkurl{doi:10.1016/j.mbs.2014.08.002}}.

\bibitem{chambolle1997image}
A{.\nobreak\kern 0.33333em}Chambolle and P.\,L{.\nobreak\kern 0.33333em}Lions,
  Image recovery via total variation minimization and related problems,
  \emph{Numerische Mathematik} 76 (1997),  167--188,
  \href{https://dx.doi.org/10.1007/s002110050258}{\nolinkurl{doi:10.1007/s002110050258}}.

\bibitem{chan1998total}
T.\,F{.\nobreak\kern 0.33333em}Chan and C.\,K{.\nobreak\kern 0.33333em}Wong,
  Total variation blind deconvolution, \emph{IEEE Transactions on Image
  Processing} 7 (1998),  370--375,
  \href{https://dx.doi.org/10.1109/83.661187}{\nolinkurl{doi:10.1109/83.661187}}.

\bibitem{clason2018total}
C{.\nobreak\kern 0.33333em}Clason, F{.\nobreak\kern 0.33333em}Kruse, and
  K{.\nobreak\kern 0.33333em}Kunisch, Total variation regularization of
  multi-material topology optimization, \emph{ESAIM: Mathematical Modelling and
  Numerical Analysis} 52 (2018),  275--303,
  \href{https://dx.doi.org/10.1051/m2an/2017061}{\nolinkurl{doi:10.1051/m2an/2017061}}.

\bibitem{Conn2000TRMethods}
A.\,R{.\nobreak\kern 0.33333em}Conn, N.\,I.\,M{.\nobreak\kern 0.33333em}Gould,
  and P.\,L{.\nobreak\kern 0.33333em}Toint, \emph{Trust Region Methods},
  Society for Industrial and Applied Mathematics, 2000,
  \href{https://dx.doi.org/10.1137/1.9780898719857}{\nolinkurl{doi:10.1137/1.9780898719857}}.

\bibitem{detournay-cheng}
E{.\nobreak\kern 0.33333em}Detournay and A.\,D{.\nobreak\kern 0.33333em}Cheng,
  Fundamentals of poroelasticity, Chapter 5 in Comprehensive Rock Engineering:
  Principles, Practice and Projects, Vol. II, Analysis and Design Method, ed.
  C. Fairhurst, \emph{Pergamon Press, 113-171}  (1993),
  \href{https://dx.doi.org/10.1016/B978-0-08-040615-2.50011-3}{\nolinkurl{doi:10.1016/b978-0-08-040615-2.50011-3}}.

\bibitem{engel2021optimal}
S{.\nobreak\kern 0.33333em}Engel, B{.\nobreak\kern 0.33333em}Vexler, and
  P{.\nobreak\kern 0.33333em}Trautmann, Optimal finite element error estimates
  for an optimal control problem governed by the wave equation with controls of
  bounded variation, \emph{IMA Journal of Numerical Analysis} 41 (2021),
  2639--2667,
  \href{https://dx.doi.org/10.1093/imanum/draa032}{\nolinkurl{doi:10.1093/imanum/draa032}}.

\bibitem{evans2010partial}
L{.\nobreak\kern 0.33333em}Evans, \emph{Partial Differential Equations},
  Graduate studies in mathematics, American Mathematical Society, 2010,
  \href{https://dx.doi.org/10.1090/gsm/019}{\nolinkurl{doi:10.1090/gsm/019}}.

\bibitem{fornasier2009subspace}
M{.\nobreak\kern 0.33333em}Fornasier and C.\,B{.\nobreak\kern
  0.33333em}Sch{\"o}nlieb, Subspace correction methods for total variation and
  $\ell_1$-minimization, \emph{SIAM Journal on Numerical Analysis} 47 (2009),
  3397--3428,
  \href{https://dx.doi.org/10.1137/070710779}{\nolinkurl{doi:10.1137/070710779}}.

\bibitem{frijns}
A.\,J.\,H{.\nobreak\kern 0.33333em}Frijns., A Four-Component Mixture Theory
  Applied to Cartilaginous Tissues: {N}umerical Modelling and Experiments,
  \emph{Thesis (Dr.ir.)--Technische Universiteit Eindhoven (The Netherlands)}
  (2000),
  \href{https://dx.doi.org/10.6100/IR537990}{\nolinkurl{doi:10.6100/ir537990}}.

\bibitem{gerdts2005solving}
M{.\nobreak\kern 0.33333em}Gerdts, {S}olving mixed-integer optimal control
  problems by {B}ranch\&{B}ound: {A} case study from automobile test-driving
  with gear shift, \emph{{O}ptimal {C}ontrol {A}pplications and {M}ethods} 26
  (2005),  1--18,
  \href{https://dx.doi.org/10.1002/oca.751}{\nolinkurl{doi:10.1002/oca.751}}.

\bibitem{gottlich2019partial}
S{.\nobreak\kern 0.33333em}G{\"o}ttlich, A{.\nobreak\kern 0.33333em}Potschka,
  and C{.\nobreak\kern 0.33333em}Teuber, A partial outer convexification
  approach to control transmission lines, \emph{Computational Optimization and
  Applications} 72 (2019),  431--456,
  \href{https://dx.doi.org/10.1007/s10589-018-0047-6}{\nolinkurl{doi:10.1007/s10589-018-0047-6}}.

\bibitem{goettlich2017partial}
S{.\nobreak\kern 0.33333em}G\"{o}ttlich, A{.\nobreak\kern 0.33333em}Potschka,
  and U{.\nobreak\kern 0.33333em}Ziegler, Partial outer convexification for
  traffic light optimization in road networks, \emph{SIAM Journal on Scientific
  Computing} 39 (2017),  B53--B75,
  \href{https://dx.doi.org/10.1137/15M1048197}{\nolinkurl{doi:10.1137/15m1048197}}.

\bibitem{hahn2023binary}
M{.\nobreak\kern 0.33333em}Hahn, S{.\nobreak\kern 0.33333em}Leyffer, and
  S{.\nobreak\kern 0.33333em}Sager, Binary optimal control by trust-region
  steepest descent, \emph{Mathematical Programming} 197 (2023),  147--190,
  \href{https://dx.doi.org/10.1007/s10107-021-01733-z}{\nolinkurl{doi:10.1007/s10107-021-01733-z}}.

\bibitem{hante2017challenges}
F.\,M{.\nobreak\kern 0.33333em}Hante, G{.\nobreak\kern 0.33333em}Leugering,
  A{.\nobreak\kern 0.33333em}Martin, L{.\nobreak\kern 0.33333em}Schewe, and
  M{.\nobreak\kern 0.33333em}Schmidt, Challenges in optimal control problems
  for gas and fluid flow in networks of pipes and canals: From modeling to
  industrial applications, in \emph{Industrial Mathematics and Complex
  Systems}, Springer, 2017,  77--122,
  \href{https://dx.doi.org/10.1007/978-981-10-3758-0_5}{\nolinkurl{doi:10.1007/978-981-10-3758-0_5}}.

\bibitem{hante2013relaxation}
F.\,M{.\nobreak\kern 0.33333em}Hante and S{.\nobreak\kern 0.33333em}Sager,
  Relaxation methods for mixed-integer optimal control of partial differential
  equations, \emph{Computational Optimization and Applications} 55 (2013),
  197--225,
  \href{https://dx.doi.org/10.1007/s10589-012-9518-3}{\nolinkurl{doi:10.1007/s10589-012-9518-3}}.

\bibitem{haslinger2015topology}
J{.\nobreak\kern 0.33333em}Haslinger and R.\,A.\,E{.\nobreak\kern
  0.33333em}M{\"a}kinen, On a topology optimization problem governed by
  two-dimensional Helmholtz equation, \emph{Computational Optimization and
  Applications} 62 (2015),  517--544,
  \href{https://dx.doi.org/10.1007/s10589-015-9746-4}{\nolinkurl{doi:10.1007/s10589-015-9746-4}}.

\bibitem{hintermuller2017optimal}
M{.\nobreak\kern 0.33333em}Hinterm{\"u}ller and C.\,N{.\nobreak\kern
  0.33333em}Rautenberg, Optimal selection of the regularization function in a
  weighted total variation model. Part I: modelling and theory, \emph{Journal
  of Mathematical Imaging and Vision} 59 (2017),  498--514,
  \href{https://dx.doi.org/10.1007/s10851-017-0744-2}{\nolinkurl{doi:10.1007/s10851-017-0744-2}}.

\bibitem{hinze2008optimization}
M{.\nobreak\kern 0.33333em}Hinze, R{.\nobreak\kern 0.33333em}Pinnau,
  M{.\nobreak\kern 0.33333em}Ulbrich, and S{.\nobreak\kern 0.33333em}Ulbrich,
  \emph{Optimization with PDE Constraints}, volume~23, Springer Science \&
  Business Media, 2008,
  \href{https://dx.doi.org/10.1007/978-1-4020-8839-1}{\nolinkurl{doi:10.1007/978-1-4020-8839-1}}.

\bibitem{kaya2020optimal}
C.\,Y{.\nobreak\kern 0.33333em}Kaya, Optimal control of the double integrator
  with minimum total variation, \emph{Journal of Optimization Theory and
  Applications} 185 (2020),  966--981,
  \href{https://dx.doi.org/10.1007/s10957-020-01671-4}{\nolinkurl{doi:10.1007/s10957-020-01671-4}}.

\bibitem{kirches2013mixed}
C{.\nobreak\kern 0.33333em}Kirches, H.\,G{.\nobreak\kern 0.33333em}Bock,
  J.\,P{.\nobreak\kern 0.33333em}Schl{\"o}der, and S{.\nobreak\kern
  0.33333em}Sager, Mixed-integer NMPC for predictive cruise control of
  heavy-duty trucks, in \emph{2013 European Control Conference (ECC)}, 2013,
  4118--4123,
  \href{https://dx.doi.org/10.23919/ECC.2013.6669210}{\nolinkurl{doi:10.23919/ecc.2013.6669210}}.

\bibitem{Klisch}
S.\,M{.\nobreak\kern 0.33333em}Klisch, Internally constrained mixtures of
  elastic continua, \emph{Math. Mech. Solids, 4:481--498}  (1999),
  \href{https://dx.doi.org/10.1177/108128659900400405}{\nolinkurl{doi:10.1177/108128659900400405}}.

\bibitem{lellmann2014imaging}
J{.\nobreak\kern 0.33333em}Lellmann, D.\,A{.\nobreak\kern 0.33333em}Lorenz,
  C.\,B{.\nobreak\kern 0.33333em}Schonlieb, and T{.\nobreak\kern
  0.33333em}Valkonen, Imaging with Kantorovich--Rubinstein discrepancy,
  \emph{SIAM Journal on Imaging Sciences} 7 (2014),  2833--2859,
  \href{https://dx.doi.org/10.1137/140975528}{\nolinkurl{doi:10.1137/140975528}}.

\bibitem{Lemon2006}
G{.\nobreak\kern 0.33333em}Lemon, J.\,R{.\nobreak\kern 0.33333em}King,
  H.\,M{.\nobreak\kern 0.33333em}Byrne, O.\,E{.\nobreak\kern 0.33333em}Jensen,
  and K.\,M{.\nobreak\kern 0.33333em}Shakesheff, Mathematical modelling of
  engineered tissue growth using a multiphase porous flow mixture theory,
  \emph{J. Math. Biol., 52:571--594}  (2006),
  \href{https://dx.doi.org/10.1007/s00285-005-0363-1}{\nolinkurl{doi:10.1007/s00285-005-0363-1}}.

\bibitem{leyffer2001integrating}
S{.\nobreak\kern 0.33333em}Leyffer, Integrating {SQP} and branch-and-bound for
  mixed integer nonlinear programming, \emph{Computational Optimization and
  Applications} 18 (2001),  295--309,
  \href{https://dx.doi.org/10.1023/A:1011241421041}{\nolinkurl{doi:10.1023/a:1011241421041}}.

\bibitem{leyffer2022sequential}
S{.\nobreak\kern 0.33333em}Leyffer and P{.\nobreak\kern 0.33333em}Manns,
  Sequential linear integer programming for integer optimal control with total
  variation regularization, \emph{ESAIM: Control, Optimisation and Calculus of
  Variations} 28 (2022), ~66,
  \href{https://dx.doi.org/10.1051/cocv/2022059}{\nolinkurl{doi:10.1051/cocv/2022059}}.

\bibitem{loxton2012control}
R{.\nobreak\kern 0.33333em}Loxton, Q{.\nobreak\kern 0.33333em}Lin,
  V{.\nobreak\kern 0.33333em}Rehbock, and K.\,L{.\nobreak\kern 0.33333em}Teo,
  Control parameterization for optimal control problems with continuous
  inequality constraints: new convergence results, \emph{Numerical Algebra,
  Control and Optimization} 2 (2012),  571--599,
  \href{https://dx.doi.org/10.3934/naco.2012.2.571}{\nolinkurl{doi:10.3934/naco.2012.2.571}}.

\bibitem{maggi2012sets}
F{.\nobreak\kern 0.33333em}Maggi, \emph{Sets of Finite Perimeter and Geometric
  Variational Problems: An Introduction to Geometric Measure Theory}, number
  135, Cambridge University Press, 2012,
  \href{https://dx.doi.org/10.1017/CBO9781139108133}{\nolinkurl{doi:10.1017/cbo9781139108133}}.

\bibitem{manns2020multidimensional}
P{.\nobreak\kern 0.33333em}Manns and C{.\nobreak\kern 0.33333em}Kirches,
  Multidimensional sum-up rounding for elliptic control systems, \emph{SIAM
  Journal on Numerical Analysis} 58 (2020),  3427--3447,
  \href{https://dx.doi.org/10.1137/19M12606}{\nolinkurl{doi:10.1137/19m12606}}.

\bibitem{manns2023integer}
P{.\nobreak\kern 0.33333em}Manns and A{.\nobreak\kern 0.33333em}Schiemann, On
  integer optimal control with total variation regularization on
  multi-dimensional domains, \emph{SIAM Journal on Control and Optimization} 61
  (2023),  3415--3441,
  \href{https://dx.doi.org/10.1137/22M152116X}{\nolinkurl{doi:10.1137/22m152116x}}.

\bibitem{marko2022integer}
J{.\nobreak\kern 0.33333em}Marko and G{.\nobreak\kern 0.33333em}Wachsmuth,
  Integer optimal control problems with total variation regularization:
  optimality conditions and fast solution of subproblems, \emph{ESAIM: COCV} 29
  (2023), ~81,
  \href{https://dx.doi.org/10.1051/cocv/2023065}{\nolinkurl{doi:10.1051/cocv/2023065}}.

\bibitem{martin2006mixed}
A{.\nobreak\kern 0.33333em}Martin, M{.\nobreak\kern 0.33333em}M{\"o}ller, and
  S{.\nobreak\kern 0.33333em}Moritz, Mixed integer models for the stationary
  case of gas network optimization, \emph{Mathematical Programming} 105 (2006),
   563--582,
  \href{https://dx.doi.org/10.1007/s10107-005-0665-5}{\nolinkurl{doi:10.1007/s10107-005-0665-5}}.

\bibitem{Tosin}
L{.\nobreak\kern 0.33333em}Preziosi and A{.\nobreak\kern 0.33333em}Tosin,
  Multiphase modelling of tumour growth and extracellular matrix interaction:
  mathematical tools and applications, \emph{J. Math. Biol., 58:625--656}
  (2009),
  \href{https://dx.doi.org/10.1007/s00285-008-0218-7}{\nolinkurl{doi:10.1007/s00285-008-0218-7}}.

\bibitem{rudin1992nonlinear}
L.\,I{.\nobreak\kern 0.33333em}Rudin, S{.\nobreak\kern 0.33333em}Osher, and
  E{.\nobreak\kern 0.33333em}Fatemi, Nonlinear total variation based noise
  removal algorithms, \emph{Physica D: Nonlinear Phenomena} 60 (1992),
  259--268,
  \href{https://dx.doi.org/10.1016/0167-2789(92)90242-F}{\nolinkurl{doi:10.1016/0167-2789(92)90242-f}}.

\bibitem{sager2005numerical}
S{.\nobreak\kern 0.33333em}Sager, \emph{Numerical Methods for Mixed-Integer
  Optimal Control Problems}, Der andere Verlag, T{\"o}nning, L{\"u}beck,
  Marburg, 2005,
  \href{https://dx.doi.org/10.11588/heidok.00024070}{\nolinkurl{doi:10.11588/heidok.00024070}}.

\bibitem{sager2012integer}
S{.\nobreak\kern 0.33333em}Sager, H.\,G{.\nobreak\kern 0.33333em}Bock, and
  M{.\nobreak\kern 0.33333em}Diehl, {T}he integer approximation error in
  mixed-integer optimal control, \emph{Mathematical Programming} 133 (2012),
  1--23,
  \href{https://dx.doi.org/10.1007/s10107-010-0405-3}{\nolinkurl{doi:10.1007/s10107-010-0405-3}}.

\bibitem{sager2011combinatorial}
S{.\nobreak\kern 0.33333em}Sager, M{.\nobreak\kern 0.33333em}Jung, and
  C{.\nobreak\kern 0.33333em}Kirches, {C}ombinatorial integral approximation,
  \emph{{M}athematical {M}ethods of {O}perations {R}esearch} 73 (2011),
  363--380,
  \href{https://dx.doi.org/10.1007/s00186-011-0355-4}{\nolinkurl{doi:10.1007/s00186-011-0355-4}}.

\bibitem{sager2021mixed}
S{.\nobreak\kern 0.33333em}Sager and C{.\nobreak\kern 0.33333em}Zeile, On
  mixed-integer optimal control with constrained total variation of the integer
  control, \emph{Computational Optimization and Applications} 78 (2021),
  575--623,
  \href{https://dx.doi.org/10.1007/s10589-020-00244-5}{\nolinkurl{doi:10.1007/s10589-020-00244-5}}.

\bibitem{severitt2022efficient}
M{.\nobreak\kern 0.33333em}Severitt and P{.\nobreak\kern 0.33333em}Manns,
  Efficient solution of discrete subproblems arising in integer optimal control
  with total variation regularization, \emph{INFORMS Journal on Computing}
  35(4) (2023),  869--885,
  \href{https://dx.doi.org/10.1287/ijoc.2023.1294}{\nolinkurl{doi:10.1287/ijoc.2023.1294}}.

\bibitem{sigmund2013topology}
O{.\nobreak\kern 0.33333em}Sigmund and K{.\nobreak\kern 0.33333em}Maute,
  Topology optimization approaches, \emph{Structural and Multidisciplinary
  Optimization} 48 (2013),  1031--1055,
  \href{https://dx.doi.org/10.1007/s00158-013-0978-6}{\nolinkurl{doi:10.1007/s00158-013-0978-6}}.

\bibitem{MBE18}
M{.\nobreak\kern 0.33333em}Verri, G{.\nobreak\kern 0.33333em}Guidoboni,
  L{.\nobreak\kern 0.33333em}Bociu, and R{.\nobreak\kern 0.33333em}Sacco, The
  role of structural viscoelasticity in deformable porous media with
  incompressible constituents: applications in biomechanics, \emph{Mathematical
  Biosciences and Engineering} 15(4) (2018),  933--959,
  \href{https://dx.doi.org/10.3934/mbe.2018042}{\nolinkurl{doi:10.3934/mbe.2018042}}.

\bibitem{vogel1996iterative}
C.\,R{.\nobreak\kern 0.33333em}Vogel and M.\,E{.\nobreak\kern 0.33333em}Oman,
  Iterative methods for total variation denoising, \emph{SIAM Journal on
  Scientific Computing} 17 (1996),  227--238,
  \href{https://dx.doi.org/10.1137/0917016}{\nolinkurl{doi:10.1137/0917016}}.

\bibitem{zeidler2012applied}
E{.\nobreak\kern 0.33333em}Zeidler, \emph{Applied Functional Analysis: Main
  Principles and Their Applications}, volume 109 of Applied Mathematical
  Sciences, Springer Science \& Business Media, 2012,
  \href{https://dx.doi.org/10.1007/978-1-4612-0821-1}{\nolinkurl{doi:10.1007/978-1-4612-0821-1}}.

\end{thebibliography}
%%%%%%%%%%%%%%%%%%%%%%%%%%%%%%%%%%%%%%%%%%%%%%%

%%%%%%%%%%%%%%%%%%%%%%%%%%%%%%%%%%%%%%%%%%%%%%%
\end{document}